\numberwithin{equation}{section}
\newcommand{\C}{{\mathds C}}
\newcommand{\Q}{{\mathds Q}}
\newcommand{\R}{{\mathds R}}
\newcommand{\Z}{{\mathds Z}}
\newcommand{\FP}{{\rm FP}}
\def\ker{{\rm{ker}}}
\def\im{{\rm{im}}}
\def\dim{{\rm{dim}}}
\theoremstyle{plain}
\newtheorem{theorem}{Theorem}[section]
\newtheorem{corollary}[theorem]{Corollary}
\newtheorem{lemma}[theorem]{Lemma}
\newtheorem{proposition}[theorem]{Proposition}
\newtheorem{question}[theorem]{Question}
\newtheorem*{question*}{Question}
\newtheorem{addendum}[theorem]{Addendum}
\theoremstyle{definition}
\newtheorem{remark}[theorem]{Remark}
\newtheorem*{acknowledgements*}{Acknowledgements}
\newtheorem{definition}[theorem]{Definition}
\newtheorem*{notation*}{Notation}
\newtheorem*{convention*}{Convention}
\title{Groups with exotic finiteness properties from complex Morse theory}
\author{Claudio Llosa Isenrich}
\address{Faculty of Mathematics, KIT, Englerstr. 2, 76131 Karlsruhe, Germany}
\email{claudio.llosa@kit.edu}
\author{Pierre Py}
\address{Institut Fourier, Universit\'e Grenoble Alpes \& CNRS, 38000 Grenoble, France}
\email{pierre.py@univ-grenoble-alpes.fr}
\thanks{The first author is grateful to the RTG 2229 ``Asymptotic Invariants and Limits of Groups and Spaces'' for their support.}
\keywords{K\"ahler groups, finiteness properties, hyperbolic groups}
\begin{document}

\begin{abstract}
Recent constructions have shown that interesting behaviours can be observed in the finiteness properties of K\"ahler groups and their subgroups. In this work, we push this further and exhibit, for each integer $k$, new hyperbolic groups admiting surjective homomorphisms to $\Z$ and to $\Z^{2}$, whose kernel is of type $\mathscr{F}_{k}$ but not of type $\mathscr{F}_{k+1}$. By a fibre product construction, we also find examples of nonnormal subgroups of K\"ahler groups with exotic finiteness properties.  
\end{abstract}

\maketitle

\section{Introduction}
Two of the most basic properties of groups are being finitely generated and being finitely presented. These properties admit a geometric interpretation in terms of classifying spaces, which leads to a higher dimensional generalisation introduced by Wall \cite{Wal-65}: for a natural number $n$, a group $G$ is called of finiteness type $\mathscr{F}_n$ if it admits a $K(G,1)$ which is a CW-complex with finite $n$-skeleton. Finite generation is then equivalent to $\mathscr{F}_1$ while finite presentability is equivalent to $\mathscr{F}_2$. We say that a group has \emph{exotic} finiteness properties, if it is $\mathscr{F}_n$, but not $\mathscr{F}_{n+1}$ for some integer $n\geq 0$. The existence of such groups is classical for $n=0, 1$ and was proved by Stallings \cite{Sta-63} for $n=2$ and by Bieri for all $n\geq 3$ \cite{Bie-76}. Since then, many examples have been constructed, showcasing that exotic finiteness properties can appear under a wide range of additional assumptions on the group. 

Classical methods used to construct groups with exotic finiteness properties include Bestvina--Brady Morse theory \cite{BesBra-97} and Brown's criterion \cite{Bro-87}. For recent use of the latter criterion, see~\cite{SWZ19} and the references there. Starting with the works of Kapovich \cite{Kap-98} and Dimca, Papadima and Suciu \cite{DimPapSuc-09-II}, it has become increasingly apparent that complex Morse theory provides a powerful method for constructing groups with exotic finiteness properties. The purpose of these works was to prove that fundamental groups of compact K\"ahler manifolds (\emph{K\"ahler groups}) can be non-coherent, respectively can have arbitrary exotic finiteness properties. 

These methods have since been extended to produce a range of examples of K\"ahler groups with exotic finiteness properties~\cite{Llo-16-II, Llo-17, BriLlo-16, NicPy-21}. These works generalised the construction in \cite{DimPapSuc-09-II}, leading to K\"ahler groups with exotic finiteness properties which all arise as fundamental groups of generic fibres of holomorphic maps from certain compact K\"ahler manifolds onto a complex torus. However, the work of Nicol\'as and Py~\cite{NicPy-21} provides tools for constructing such examples from holomorphic maps with isolated singularities onto arbitrary closed Riemann surfaces (possibly of genus greater than $1$), showing that the potential of these methods stretches beyond the realm of the already known examples. 

Finally, using again complex Morse theory, we recently produced, for every integer $n\geq 0$, examples of subgroups of hyperbolic groups of type $\mathscr{F}_n$, but not $\mathscr{F}_{n+1}$~\cite{LloPy-22}. These were the first such examples when $n\ge 4$. This solved an old problem raised by Brady \cite{Bra-99}, where classical methods using Bestvina--Brady Morse theory had so far only been able to provide an answer up to $n=3$ \cite{Bra-99,Lod-18,Kro-21,LIMP-21}. This showed that indeed the methods from complex Morse theory are sufficiently powerful to construct examples of groups with exotic finiteness properties that are of interest beyond the realm of complex geometry. Let us also mention in this context that recently the combination of Bestvina--Brady Morse theory and real hyperbolic geometry led to the first example of a non-hyperbolic subgroup of a hyperbolic group with a finite classifying space \cite{IMM-22, IMM-21}.

In this work we provide further examples of groups with exotic finiteness properties built from complex geometry. We emphasise that we produce both K\"ahler and non-K\"ahler groups. The first result takes as input a new class of hyperbolic K\"ahler groups constructed by Stover and Toledo \cite{StoTol-21-II}. Their groups arise as fundamental groups of certain compact K\"ahler manifolds which admit a K\"ahler metric of negative sectional curvature, but are not homotopy equivalent to any locally symmetric manifold. Combining ideas from~\cite{LloPy-22} and~\cite{StoTol-21-II}, we shall prove:

\begin{theorem}\label{thm:ST-Intro}
    For every $n\geq 2$ there is an $n$-dimensional compact K\"ahler manifold $Y$ which admits a K\"ahler metric of negative sectional curvature, is not homotopy equivalent to any locally symmetric manifold and which has the following property. There exists a dense open set $O\subset H^{1}(Y,\R)-\{0\}$ which is invariant by multiplication by nonzero scalars, such that for any homomorphism $\phi: \pi_1(Y)\to \mathbb{Z}$ contained in $O$, the kernel $\ker(\phi)$ is of type $\mathscr{F}_{n-1}$ but not of type ${\rm FP}_n(\mathbb{Q})$.
\end{theorem}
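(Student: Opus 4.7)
The plan is to combine the negatively curved K\"ahler manifolds of Stover--Toledo \cite{StoTol-21-II} with the complex Morse-theoretic machinery developed in \cite{LloPy-22}. First I would take $Y$ to be one of the Stover--Toledo compact K\"ahler manifolds of complex dimension $n$, possibly replaced by a suitable finite \'etale cover to ensure $b_1(Y)>0$. This immediately provides both the negatively curved K\"ahler metric and the fact that $Y$ is not homotopy equivalent to any locally symmetric manifold. The existence of nonzero holomorphic $1$-forms on (a finite cover of) $Y$ should be read off from the explicit description of $Y$ as a ramified cover of a product-type locally symmetric ambient space.

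The central step, which I expect to be the main obstacle, is to construct the dense open cone $O\subset H^1(Y,\R)\setminus\{0\}$. The natural candidate is the set of real classes whose harmonic representative has the form $\omega_\phi + \overline{\omega_\phi}$ with $\omega_\phi\in H^{1,0}(Y)$ a holomorphic $1$-form having only isolated zeros. Since having isolated zeros is a Zariski-open condition on the finite-dimensional space $H^{1,0}(Y)$, scale-invariance and openness of $O$ are automatic as soon as one exhibits a single such form. The delicate point is to rule out positive-dimensional zero loci produced by the ramification divisor of the Stover--Toledo cover; I would address this through a local model of the pull-back of holomorphic $1$-forms along the branch locus, combined with a generic choice within the Stover--Toledo family.

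Once $O$ has been constructed, the finiteness properties of $\ker(\phi)$ for integral $\phi\in O$ follow from the Morse-theoretic argument of \cite{LloPy-22} applied to the pair $(Y,\omega_\phi)$. One passes to the infinite cyclic cover $\widehat{Y}\to Y$ corresponding to $\ker(\phi)$, where the pull-back of $\omega_\phi$ becomes exact: $\omega_\phi = dH$ for a holomorphic function $H:\widehat{Y}\to\C$. The real part $\mathrm{Re}(H)$ is a Morse function whose critical points, all of real index $n$, are the lifts of the isolated zeros of $\omega_\phi$. Since $Y$ is aspherical by negative curvature, $\widehat{Y}$ is a $K(\ker(\phi),1)$, and the analysis of the sublevel-set filtration of $\mathrm{Re}(H)$ as in \cite{LloPy-22} shows that $\ker(\phi)$ is of type $\mathscr{F}_{n-1}$. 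The failure of $\FP_n(\Q)$ then follows from a nonvanishing Euler characteristic count for the generic fibre of the associated map to $\C/\Z$, combined with a Bieri--Neumann--Strebel--Sikorav type criterion that forces $H_n(\ker(\phi),\Q)$ to be infinite-dimensional.
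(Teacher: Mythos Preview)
Your overall strategy is the right one and matches the paper's: take a Stover--Toledo manifold, arrange that generic holomorphic $1$-forms have isolated zeros, and feed this into the machinery of~\cite{LloPy-22}. However, the step you flag as ``the main obstacle'' is exactly where your proposal has a genuine gap, and your suggested fix does not work as stated.

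First, a factual point: the Stover--Toledo manifolds are cyclic branched covers of compact \emph{ball quotients} $X=\Gamma\backslash\mathbb{B}^n_{\C}$ along totally geodesic divisors, not of ``product-type locally symmetric ambient spaces''. So your plan to read off holomorphic $1$-forms from a product structure, and to analyse their zeros via a ``local model along the branch locus combined with a generic choice within the Stover--Toledo family'', has no obvious meaning here. The branch divisor is a codimension-one ball quotient sitting inside $X$; a local computation near it tells you nothing about whether a pulled-back $1$-form vanishes along a curve somewhere else on $Y$.

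What the paper does instead is to reduce the isolated-zeros question entirely to properties of the underlying ball quotient. By a theorem of Eyssidieux~\cite{Eys-18} (see also \cite[Theorem~24]{LloPy-22}), every cocompact arithmetic lattice in ${\rm PU}(n,1)$ of simplest type has a congruence subgroup whose Albanese map is an \emph{immersion}, hence finite. One first passes to such a congruence cover $X'$, and only then forms the Stover--Toledo branched cover $Y\to X'$. The composition $Y\to X'\xrightarrow{a_{X'}} A(X')$ is then a finite holomorphic map to a torus, and now the general result from~\cite{LloPy-22} (Propositions~14 and~18 there) gives that a Zariski-open dense set of holomorphic $1$-forms on $Y$ have finitely many zeros. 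This is the missing ingredient in your argument; without Eyssidieux's theorem there is no way to control the zero loci.

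Two smaller points. For the failure of ${\rm FP}_n(\Q)$, the paper does not look at any ``generic fibre''; it computes $\chi(Y)\neq 0$ directly from the branched-cover formula $\chi(Y)=d\,\chi(X')+(1-d)\,\chi(D')$ and the sign of the Euler characteristic of ball quotients, then combines this with the vanishing of the $\ell^2$-Betti numbers of $Y$ outside the middle degree (a consequence of the finite map to a torus) and \cite[Proposition~14]{LIMP-21}. Finally, the $\mathscr{F}_{n-1}$ conclusion and the openness/density of $O$ are packaged in the paper via the BNSR-invariants (the set $\Sigma^{n-1}\cap -\Sigma^{n-1}$), though your direct Morse-theoretic description is equivalent and is in fact discussed separately in Section~\ref{morse-theoretic-remarks}.
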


The definition of the finiteness property ${\rm FP}_n(\mathbb{Q})$ will be recalled in Section~\ref{sec:FinProps}. This produces many new subgroups of hyperbolic groups of finiteness type $\mathscr{F}_{n-1}$ and not $\mathscr{F}_n$, thus extending the main result from~\cite{LloPy-22}. Using the Cartwright--Steger surface, we can also produce further examples of K\"ahler groups of type $\mathscr{F}_{2n-1}$ and not $\mathscr{F}_{2n}$ for all $n\geq 2$. See Section \ref{sec:ST-CS} for a discussion.

Besides producing homomorphisms from certain negatively curved K\"ahler groups onto $\mathbb{Z}$, whose kernels have exotic finiteness properties, we can also produce similar homomorphisms onto $\mathbb{Z}^{2}$, using results from the theory of {\it Bieri-Neumann-Strebel-Renz invariants} (in short BNSR-invariants; see Section~\ref{sec:FinProps} for some background). This is the content of our next main result. 

\begin{theorem}\label{thm:zeetwo} Let $X$ be a closed aspherical K\"ahler manifold with positive first Betti number and nonzero Euler characteristic. Assume that the Albanese map of $X$ is finite. Let $n={\rm dim}_{\C} \, X$. Then there exist surjective morphisms $\pi_{1}(X)\to \mathbb{Z}^{2}$ whose kernel is of type $\mathscr{F}_{n-1}$ but not of type ${\rm FP}_n(\mathbb{Q})$. The set of such homomorphisms is open. 
\end{theorem}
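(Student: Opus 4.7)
The plan is to combine the complex Morse-theoretic techniques of \cite{LloPy-22,NicPy-21} with Bieri-Neumann-Strebel-Renz (BNSR) theory. Since $\alpha\colon X\to T$ is finite and $\dim_{\C}X=n$, we have $\dim_{\C}T=n$ and $b_{1}(X)=2n$; since $\chi(X)\neq 0$, the map $\alpha$ cannot be \'etale and so is ramified along a nonempty divisor $R\subset X$. Any surjective homomorphism $\phi\colon\pi_{1}(X)\to\Z^{2}$ factors through the Albanese and corresponds to a surjective holomorphic morphism $q\colon T\to E$ onto an elliptic curve $E$; setting $f=q\circ\alpha\colon X\to E$ gives $\ker(\phi)=\ker(f_{*})$. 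A standard transversality argument shows that for $q$ in an open dense subset of the parameter space, $f$ has only finitely many critical points, located on $R$, each of holomorphic Morse type.

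For such a generic $f$, the complex Morse theory of \cite{LloPy-22} yields that $\ker(\phi)$ is of type $\mathscr{F}_{n-1}$: lifting to the $\Z^{2}$-cover $\widehat X \to X$ associated to $\ker(\phi)$, the map $\widehat f\colon \widehat X\to \C$ is a proper holomorphic Morse function. Each of its critical points has real Morse index exactly $n$ by Andreotti-Frankel, so $\widehat X$ admits a $\ker(\phi)$-equivariant CW-structure whose $(n-1)$-skeleton is cocompact, giving $\mathscr{F}_{n-1}$ at the group level.

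For the failure of $\mathrm{FP}_{n}(\Q)$, we use the Bieri-Renz description of $\Sigma^{n}$ via vanishing of Novikov homology. The Morse chain complex of $\widehat f$ has generators only in degree $n$; a Poincar\'e-Hopf argument for holomorphic $1$-forms on compact K\"ahler manifolds identifies their signed count with $\pm\chi(X)\neq 0$. In particular the chain complex is concentrated in degree $n$ and has nonzero Euler characteristic there, so $H_{n}$ over the Novikov completion of $\Q[\pi_{1}(X)]$ is nonzero; hence the sphere of characters associated to $\phi$ is not contained in $\Sigma^{n}(\pi_{1}(X),\Q)$, and $\ker(\phi)$ is not of type $\mathrm{FP}_{n}(\Q)$. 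Openness in $\phi$ follows from the openness of $\Sigma^{n-1}(\pi_{1}(X),\Z)$ combined with the stability of the nonzero Morse count under small perturbations. The main obstacle is to promote the Morse information, which is naturally indexed by single $\Z$-valued characters, to a statement valid uniformly along the whole $2$-plane $\phi^{*}(\mathrm{Hom}(\Z^{2},\R))\subset H^{1}(X,\R)$; the BNSR formalism is precisely designed to effect this passage, and is the essential ingredient allowing one to upgrade the $\Z$-valued results of \cite{LloPy-22} to $\Z^{2}$-valued kernels.
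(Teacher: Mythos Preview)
Your proposal contains a genuine gap. The central claim that ``any surjective homomorphism $\phi\colon\pi_{1}(X)\to\Z^{2}$ factors through the Albanese and corresponds to a surjective holomorphic morphism $q\colon T\to E$ onto an elliptic curve'' is false. A surjection $\phi$ corresponds to a rational $2$-plane $P\subset H^{1}(X,\R)$, and $\phi$ is induced by a holomorphic map to an elliptic curve precisely when $P$ is a \emph{complex} line for the Hodge structure on $H^{1}(X,\R)$. Generically it is not. Worse, even the existence of a single such $\phi$ requires the Albanese variety $A(X)$ to admit an elliptic curve quotient, i.e.\ a $2$-dimensional rational sub-Hodge structure in $H^{1}(X,\Q)$. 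Nothing in the hypotheses guarantees this: $A(X)$ may well be a simple abelian variety, in which case there is no holomorphic map $X\to E$ whatsoever and your construction never gets started. (Relatedly, your claim $b_{1}(X)=2n$ is also incorrect; finiteness of the Albanese map only gives $b_{1}(X)\ge 2n$.)

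The paper's proof avoids this obstacle by never invoking a map to an elliptic curve. Instead one takes a holomorphic $1$-form $\alpha$ with finitely many zeros (these exist densely because the Albanese map is finite) and observes that for \emph{every} $\theta\in\R$ the form $e^{i\theta}\alpha$ is again holomorphic with finitely many zeros, so by \cite{LloPy-22} the entire circle $\{[{\rm Re}(e^{i\theta}\alpha)]:\theta\in\R\}\subset S(\pi_{1}(X))$ lies in $\Sigma^{n-1}\cap -\Sigma^{n-1}$. One then perturbs $[{\rm Re}(\alpha)]$ and $[{\rm Im}(\alpha)]$ to nearby \emph{rational} classes $a_{1},a_{2}$; by openness of $\Sigma^{n-1}$ the circle through $a_{1},a_{2}$ is still contained in $\Sigma^{n-1}\cap -\Sigma^{n-1}$, and Theorem~\ref{thm:finiteness-high-rank} then gives $\mathscr{F}_{n-1}$ for the kernel of the resulting map to $\Z^{2}$. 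The point is that after perturbation the plane spanned by $a_{1},a_{2}$ is typically \emph{not} a complex line, so no holomorphic map to a curve underlies the construction; the BNSR openness is what replaces it. Your final paragraph correctly identifies this ``promotion along the $2$-plane'' as the crux, but your proposed mechanism (a global holomorphic $f\colon X\to E$) is exactly what is unavailable in general.
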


Let us recall that a map $f$ is said to be {\it finite} if the preimage by $f$ of each point of the target space is a finite set. For the definition of the Albanese torus and Albanese map of a closed K\"ahler manifold, we refer the reader to~\cite[\S 3.1]{LloPy-22} and~\cite[\S 12.1.3]{Voi-book}. As for the topology alluded to in the statement of Theorem~\ref{thm:zeetwo}, it will be defined in~Section~\ref{sec:FinProps}. As we shall explain in Section~\ref{sec:zeetwo}, this theorem applies to certain arithmetic ball quotients as well as to some of the manifolds built by Stover and Toledo in \cite{StoTol-21-II}. This allows us to deduce the following result.

\begin{corollary}\label{cor:zeetwo}
For every $k\ge 1$ there is a hyperbolic group $G$ and a surjective homomorphism $\phi: G \to \mathbb{Z}^2$ such that $\ker(\phi)$ is of type $\mathscr{F}_{k}$, but not ${\rm FP}_{k+1}(\mathbb{Q})$.
\end{corollary}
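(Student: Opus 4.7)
The plan is to deduce this directly from Theorem~\ref{thm:zeetwo}. Given $k\ge 1$, I set $n=k+1\ge 2$ and look for a closed aspherical K\"ahler manifold $X$ of complex dimension $n$ satisfying simultaneously: (i) $\pi_{1}(X)$ is word hyperbolic, (ii) $b_{1}(X)>0$, (iii) $\chi(X)\neq 0$, and (iv) the Albanese map of $X$ is finite. Setting $G=\pi_{1}(X)$ and applying Theorem~\ref{thm:zeetwo} then yields a surjection $\phi:G\to\Z^{2}$ whose kernel is of type $\mathscr{F}_{n-1}=\mathscr{F}_{k}$ but not of type ${\rm FP}_{n}(\Q)={\rm FP}_{k+1}(\Q)$, which is exactly the statement to prove. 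The corollary is therefore reduced to exhibiting, in every complex dimension $n\ge 2$, an $X$ enjoying (i)--(iv).

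For the supply of such manifolds I would use the two families announced just after Theorem~\ref{thm:zeetwo} and to be treated in detail in Section~\ref{sec:zeetwo}: certain arithmetic ball quotients (sufficient for $n=2$ and usable in all dimensions) and the negatively curved but non-locally-symmetric K\"ahler manifolds constructed by Stover and Toledo in~\cite{StoTol-21-II} (available in each dimension $n\ge 2$). Both carry a K\"ahler metric of strictly negative sectional curvature, so their universal covers are Cartan--Hadamard manifolds; this gives asphericity for free and produces a CAT$(-1)$ structure on the universal cover, whence (i). Property (iii) follows from Hirzebruch proportionality in the ball quotient case and from explicit control of Chern numbers, respectively of the base of the construction, in the Stover--Toledo case. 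Property (ii) is arranged, in the arithmetic setting, by passing to a suitable congruence cover (producing positive first Betti number is by now a classical result for many complex hyperbolic arithmetic lattices), and is part of the specifications one imposes in the Stover--Toledo construction.

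The real obstacle, and the heart of the proof, is (iv). The approach I would take is to argue that the Albanese image has full complex dimension $n$: a positive-dimensional fibre of the Albanese map would be a closed complex subvariety $F\subset X$ on which every global holomorphic $1$-form of $X$ restricts to zero, hence with trivial Albanese. Under the presence of enough independent holomorphic $1$-forms, such an $F$ is obstructed by the combination of negative sectional curvature and Siu-type rigidity for harmonic maps from compact K\"ahler manifolds to tori, which forces the existence of an equivariant pluri-harmonic map from the universal cover and prevents the occurrence of totally geodesic complex subvarieties on which the Albanese pulls back trivially. For the arithmetic ball quotients this means one must choose the cover so that Matsushima-type formulas furnish enough Albanese directions. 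For the Stover--Toledo manifolds, which arise as ramified covers of ball quotients, property (iv) is pulled back from the corresponding property of a well chosen base, to which the same arguments apply. Completing these verifications relies on the concrete description of both families that is deferred to Section~\ref{sec:zeetwo}, and it is the only step where genuinely new work beyond Theorem~\ref{thm:zeetwo} is required.
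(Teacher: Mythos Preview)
Your overall plan matches the paper's exactly: set $n=k+1$, produce a closed aspherical K\"ahler $X$ of complex dimension $n$ satisfying (i)--(iv), and invoke Theorem~\ref{thm:zeetwo}. You also correctly identify the two families one should use (arithmetic ball quotients and the Stover--Toledo manifolds) and correctly note that, for the latter, the finiteness of the Albanese map follows from that of the underlying ball quotient because compositions of finite maps are finite. The verification of (i)--(iii) is likewise fine.

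The gap is your justification of (iv) for the ball quotients. Your sketch claims that a positive-dimensional Albanese fibre $F$ would be ruled out by ``Siu-type rigidity for harmonic maps to tori'' together with negative curvature. This does not work as stated: harmonic maps from a compact K\"ahler manifold to a flat torus are simply given by harmonic $1$-forms, and these carry no rigidity of the kind you invoke; nothing in Siu's theory prevents a holomorphic $1$-form from vanishing along a subvariety, nor is an Albanese fibre totally geodesic in general. Moreover, even showing that the Albanese image has dimension $n$ (so that the generic fibre is finite) is not enough---you must exclude positive-dimensional \emph{special} fibres as well, which is a strictly stronger condition. The paper does not attempt such an argument; instead it invokes a theorem of Eyssidieux~\cite{Eys-18} (see also \cite[Theorem~24]{LloPy-22}): for a cocompact arithmetic lattice $\Gamma<{\rm PU}(n,1)$ with $b_1(\Gamma)>0$, some congruence cover $X'\to X$ has Albanese map which is an \emph{immersion}, hence finite. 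This is the concrete input you are missing, and it is genuinely nontrivial---it relies on arithmetic structure, not on harmonic map theory applied to the Albanese map itself. Once you replace your sketch of (iv) by an appeal to Eyssidieux's theorem (and then, for the Stover--Toledo manifolds, compose with the ramified cover as the paper does in the proof of Theorem~\ref{thm:ST-main-text}), your argument becomes correct and coincides with the paper's.
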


We now proceed to explain a new way of using complex Morse theory to produce groups with exotic finiteness properties as fibre products. An important novelty is that these groups are not constructed as kernels of homomorphisms, making them rather different from most other groups constructed using Morse theory. One main consequence of this approach is the following result. 

\begin{theorem}\label{thm:Livne}
Let $X_{1}=\Gamma\backslash \mathbb{B}_{\mathbb{C}}^n$ be a compact complex ball quotient with $n\ge 2$, and let $p_1:X_1\to \Sigma$ be a surjective holomorphic map with connected fibres onto a closed hyperbolic Riemann surface. Assume that $p_1$ has a finite non-empty set of critical points. Let $p_2: X_2 \to \Sigma$ be a ramified covering with non-trivial set of singular values that is disjoint from the set of singular values of $p_1$. Assume that $p_{2\ast}: \pi_1(X_2)\to \Sigma$ is surjective.

 Then the group theoretic fibre product $P\leq \pi_1(X_1)\times \pi_1(X_2)$ of the induced surjective homomorphisms $p_{i,\ast}:\pi_1(X_i)\to \pi_1(\Sigma)$, $i=1,~2$, is a non-normal K\"ahler subgroup of finiteness type $\mathscr{F}_n$ and not ${\rm FP}_{n+1}(\mathbb{Q})$.
\end{theorem}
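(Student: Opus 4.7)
My plan is to realise $P$ geometrically as the fundamental group of the fibre product Kähler manifold $Z := X_1\times_\Sigma X_2\subset X_1\times X_2$, and then derive its finiteness properties via complex Morse theory on the natural map $\pi:Z\to\Sigma$, in the spirit of \cite{LloPy-22, NicPy-21}. Write $B_i$ for the critical values of $p_i$, so that $B_1\cap B_2=\emptyset$ by assumption.

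First I would show that $Z$ is a smooth compact Kähler manifold of complex dimension $n$, with $\pi_1(Z)=P$. Smoothness comes from the fact that at any $(x_1,x_2)\in Z$ the disjointness of the critical values forces at least one of $dp_1(x_1), dp_2(x_2)$ to be surjective onto $T_{p_i(x_i)}\Sigma$, which is enough to make $(p_1,p_2):X_1\times X_2\to\Sigma\times\Sigma$ transverse to the diagonal $\Delta$ along $Z$; a dimension count gives $\dim_{\mathbb{C}} Z=n$, and Kählerness is inherited from the product. The first projection $q_1:Z\to X_1$ is a ramified cover of $X_1$ with smooth branch divisor $D:=p_1^{-1}(B_2)$, and covering-space theory for $q_1$ combined with the surjectivity of $p_{2,\ast}$ identifies $\pi_1(Z)$ with $P$, so that $P$ is indeed a Kähler group. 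Non-normality in $\pi_1(X_1)\times\pi_1(X_2)$ then follows quickly: if $P$ were normal, conjugating an arbitrary $(g_1,g_2)\in P$ by any $(h_1,h_2)$ would force $p_{2,\ast}(h_2)^{-1}p_{1,\ast}(h_1)$ to commute with every $g\in \pi_1(\Sigma)$ (using that both $p_{i,\ast}$ are surjective); since hyperbolic surface groups are centreless, this would give $(h_1,h_2)\in P$ for every pair, contradicting $P\subsetneq\pi_1(X_1)\times\pi_1(X_2)$.

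The heart of the proof is a Morse-theoretic analysis of $\pi:Z\to\Sigma$. Its critical locus splits into two kinds of contributions: a finite set of Morse-type isolated critical points above $B_1$ (one for each preimage in $X_2$), and, above each point of $B_2$, an entire smooth $(n-1)$-dimensional fibre of $p_1$ appearing as a positive-dimensional critical stratum coming from the ramification of $p_2$. Adapting the complex Morse-theoretic machinery of \cite{NicPy-21, LloPy-22}, the isolated critical points should contribute complex Lefschetz thimbles of real middle dimension $n$, which are exactly the ones responsible for the exotic finiteness in \cite{LloPy-22}, while the ramification strata contribute only cells of real dimension at most two, the transverse local model of $\pi$ there being $z\mapsto z^e$ on a disc. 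This should yield a CW-decomposition of $Z$ from which one reads off that $P=\pi_1(Z)$ is of type $\mathscr{F}_n$; lifting the vanishing cycles attached to the isolated critical points of $p_1$ to the universal cover $\widetilde Z\to Z$ then, as in \cite{LloPy-22}, produces an infinite-dimensional $\mathbb{Q}$-vector space of independent classes in $H_{n+1}(P,\mathbb{Q})$, contradicting $\mathrm{FP}_{n+1}(\mathbb{Q})$.

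The main obstacle lies precisely here: unlike in \cite{LloPy-22, NicPy-21}, the critical set of $\pi$ is not isolated, and the key technical point will be to show that the positive-dimensional strata produced by the ramification of $p_2$ are tame enough to contribute only low-dimensional handles, so that the sharp dichotomy $\mathscr{F}_n$ versus $\mathrm{FP}_{n+1}(\mathbb{Q})$ is controlled entirely by the isolated critical points of $p_1$, exactly as in the ball-quotient analysis of \cite{LloPy-22}.
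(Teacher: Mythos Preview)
Your opening paragraph is essentially correct and matches the paper: $Z$ is a smooth compact K\"ahler manifold, $\pi_1(Z)\cong P$, and the non-normality argument works.

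The Morse-theoretic core has a conceptual gap, however. Morse theory on the map $\pi:Z\to\Sigma$ describes the topology of $Z$ relative to a fibre, hence gives information about the kernel of $\pi_\ast:P\to\pi_1(\Sigma)$ in the style of \cite{NicPy-21,LloPy-22}, not about $P=\pi_1(Z)$ itself. Since $Z$ is a closed manifold, any CW-structure on it is finite in every degree; this alone cannot prove $P$ is $\mathscr{F}_n$, because what is needed is a $K(P,1)$ with finite $n$-skeleton, and $Z$ is \emph{not} aspherical. For the same reason your last step fails: the universal cover $\widetilde Z$ is simply connected, so $H_\ast(\widetilde Z,\mathbb{Q})$ detects the higher homotopy of $Z$, not the group homology $H_\ast(P,\mathbb{Q})$. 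In \cite{NicPy-21,LloPy-22} the space on which one does Morse theory is always an \emph{aspherical} cover of an aspherical manifold; no cover of $Z$ has this property.

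The paper abandons $\pi:Z\to\Sigma$ entirely. The right aspherical model for $P$ is the cover $W\to X_1\times X_2$ associated to $P\le\pi_1(X_1)\times\pi_1(X_2)$; since $X_1\times X_2$ is aspherical, $W$ is a $K(P,1)$, and $Z$ embeds in $W$ as a compact lift $\widehat Z$. The Morse function used is the pullback to $W$ of the squared distance $f:Y\to\mathbb{R}_{\ge0}$ to the compact lift of the diagonal in the cover $Y\to\Sigma\times\Sigma$, for the nonpositively curved product hyperbolic metric. The key lemma is that $d_yf$ is nonzero on \emph{each} of the two $\Sigma$-factor directions of $T_yY$ away from the diagonal, so $f\circ\pi$ is critical only where $d(p_1,p_2)=0$, a discrete set disjoint from $\widehat Z$; in particular the positive-dimensional strata over $B_2$ that you worried about never appear, because there $dp_1$ is surjective and the first-factor component of $\nabla f$ survives. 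After a local holomorphic perturbation one obtains a proper Morse function on $W$ all of whose critical points outside $\widehat Z$ have index $n_1+n_2=n+1$. Thus $W$ is, up to homotopy, $Z$ with $(n+1)$-cells attached: this simultaneously shows that $P$ is $\mathscr{F}_n$ (the $n$-skeleton of the $K(P,1)$ is that of the compact $Z$) and, via the increasing-Betti-number argument of \cite{NicPy-21}, that $H_{n+1}(P,\mathbb{Q})=H_{n+1}(W,\mathbb{Q})$ is infinite-dimensional.
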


We recall that the fibre product $P$ is the subgroup of $\pi_1(X_1)\times \pi_1(X_2)$ defined as follows:
$$P:=\{(a,b)\in \pi_1(X_1)\times \pi_1(X_2): p_{1,\ast}(a)=p_{2,\ast}(b)\}.$$
Concrete examples to which the $n=2$ version of Theorem \ref{thm:Livne} can be applied are the so-called {\it Livn\'e surfaces}~\cite{Liv-81}. We refer to Section \ref{sec:Liv-Ex} for their definition. For $n\ge 3$ we do not know examples of ball quotients admitting a map $p_1$ as in the theorem. We also observe that our assumption that the critical set of $p_1$ is non-empty is always satisfied, thanks to a theorem due to Koziarz and Mok~\cite{KozMok-2010}. We prefer however to state Theorem~\ref{thm:Livne} as above to emphasise the fact that we do need some critical points!

Finally, let us mention that our methods from \cite{LloPy-22} can also be applied to obtain a new proof of the following result of Kochloukova and Vidussi from \cite{KocVid-22}.

\begin{theorem}[{Kochloukova--Vidussi \cite[Corollary 1.11]{KocVid-22}}]\label{thm:KV}
    Let $n\geq 2$. There is an aspherical smooth complex projective variety $X^n$ of dimension $n$ whose fundamental group $\pi_1(X^n)$ is an irreducible polysurface group which contains for every $j\in \left\{1,\dots,n\right\}$ a subgroup of type $\mathscr{F}_{j-1}$, but not of type ${\rm FP}_{j}(\mathbb{Q})$.
\end{theorem}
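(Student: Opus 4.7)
The plan is to construct $X^{n}$ as an iterated Kodaira-type fibration whose fundamental group is an irreducible polysurface group equipped with canonical surjections onto the fundamental group of each intermediate stage, and then, for every $j$, to obtain the desired subgroup by pulling back to $\pi_{1}(X^{n})$ the kernel of a homomorphism $\pi_{1}(X^{j})\to \Z$ produced by the complex Morse theory machinery of \cite{LloPy-22}.

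\textbf{Construction of $X^{n}$.} We would build $X^{n}$ inductively: let $X^{1}$ be a closed hyperbolic Riemann surface, and at each step let $X^{k}\to X^{k-1}$ be a non-isotrivial holomorphic surface bundle with fibers of genus at least $2$ and sufficiently rich monodromy. Such iterated Kodaira-type fibrations exist by classical constructions and produce closed aspherical smooth projective varieties $X^{n}$, while a careful choice of monodromies ensures the irreducibility of the resulting polysurface $\pi_{1}(X^{n})$. The compositions $X^{n}\to X^{j}$ induce surjections $q_{j}: \pi_{1}(X^{n})\twoheadrightarrow \pi_{1}(X^{j})$ whose kernels are the fundamental groups of the corresponding fibers, themselves polysurface groups of type $\mathscr{F}_{\infty}$.

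\textbf{Extracting the subgroups.} For each $j\in \{1,\dots,n\}$, we would apply the complex Morse theoretic methods of \cite{LloPy-22} to the closed aspherical K\"ahler manifold $X^{j}$: under the standard hypothesis that $X^{j}$ carries a holomorphic $1$-form with only isolated zeros, they yield a homomorphism $\phi_{j}: \pi_{1}(X^{j})\to \Z$ whose kernel is of type $\mathscr{F}_{j-1}$ but not of type $\FP_{j}(\Q)$. The composition $\tilde{\phi}_{j}:=\phi_{j}\circ q_{j}$ is then a homomorphism $\pi_{1}(X^{n})\to \Z$, and $H_{j}:=\ker(\tilde{\phi}_{j})$ fits into the short exact sequence
\begin{equation*}
1 \longrightarrow \ker(q_{j}) \longrightarrow H_{j} \longrightarrow \ker(\phi_{j}) \longrightarrow 1.
\end{equation*}
Since $\ker(q_{j})$ is polysurface, hence of type $\mathscr{F}_{\infty}$, a standard argument on extensions shows that $H_{j}$ has the same finiteness properties as $\ker(\phi_{j})$, namely $\mathscr{F}_{j-1}$ but not $\FP_{j}(\Q)$.

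\textbf{Main obstacle.} The hardest step will be to arrange the inductive construction so that each intermediate $X^{j}$ satisfies the K\"ahler-geometric hypotheses required by the results of \cite{LloPy-22}, most importantly the existence on $X^{j}$ of a holomorphic $1$-form with isolated zeros (equivalently, that the Albanese map of $X^{j}$ is sufficiently non-degenerate), while simultaneously preserving the irreducibility of the polysurface $\pi_{1}(X^{n})$. Balancing these two constraints at every level of the tower will likely demand a delicate choice of Kodaira-type fibrations, perhaps obtained via ramified or \'etale coverings tailored in the spirit of our constructions in \cite{LloPy-22}.
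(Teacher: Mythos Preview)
Your approach is correct but differs from the paper's in a notable way. The paper also builds $X^{n}$ as an iterated Kodaira fibration with injective monodromy and a finite holomorphic map $X^{n}\to A$ to a complex torus, but instead of applying the complex Morse theory to the \emph{bases} $X^{j}$ and pulling back along the surjections $q_{j}$, it applies it to the \emph{fibres}: the preimage $Y^{i}:=\pi_{n,n-i+1}^{-1}(F_{n-i+1})\subset X^{n}$ of a fibre of $X^{n-i+1}\to X^{n-i}$ is itself an $i$-iterated Kodaira fibration which is $\pi_{1}$-injected in $X^{n}$, and the restriction of the finite map $X^{n}\to A$ to $Y^{i}$ is automatically finite. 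Hence Theorem~\ref{thm:LP-BNSR} applies directly to $Y^{i}$, and the desired subgroup is $\ker(\chi_{i})\le \pi_{1}(Y^{i})\le \pi_{1}(X^{n})$, with no extension argument needed.

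Your route trades this geometric restriction step for the standard algebraic fact that in a short exact sequence with kernel of type $\mathscr{F}_{\infty}$ the middle group and the quotient share finiteness properties; it has the pleasant side effect that your subgroups $H_{j}$ are normal in $\pi_{1}(X^{n})$, which the paper's are not. The obstacle you flag---arranging a finite Albanese map on every intermediate $X^{j}$---is dispatched precisely by the paper's inductive construction: the Kodaira--Atiyah surface $X^{2}$ is a ramified cover of a product $R\times T$ of hyperbolic curves, and at each step one obtains (after passing to finite covers) a finite map $X^{j+1}\to X'\times X''$ with $X',X''$ finite covers of $X^{j}$. Propagating this yields a finite map $X^{j}\to S_{1}\times\cdots\times S_{2^{j-1}}$ for every $j$, and composing with the Albanese embeddings of the $S_{k}$ gives the finite map to a torus you need. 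Injective monodromy, which the construction preserves throughout, simultaneously secures irreducibility.
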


The definition of polysurface groups will be recalled in Section~\ref{sec:Koc-Vid}, where we will also explain our new proof of Theorem~\ref{thm:KV}.

\subsection*{Structure}

In Section \ref{sec:FinProps}, we give some background on finiteness properties in group theory. In Section \ref{sec:CxTori}, we introduce the main construction methods of groups with exotic finiteness properties (K\"ahler or not) from maps onto complex tori. It can serve as an introductory reference for gaining an overview of the techniques from the works~\cite{DimPapSuc-09-II,Kap-98,Llo-16-II, Llo-17, BriLlo-16, NicPy-21, LloPy-22}. In Section \ref{sec:ST-Examples}, we illustrate these methods by proving Theorem \ref{thm:ST-Intro}. In Section~\ref{sec:zeetwo}, we prove Theorem~\ref{thm:zeetwo}. In Section~\ref{sec:Livne}, we explain how complex Morse theory can be used to produce non-normal subgroups with exotic finiteness properties, proving Theorem \ref{thm:Livne}. In Section \ref{sec:Koc-Vid}, we describe a new proof of Theorem \ref{thm:KV}, which was first proved by Kochloukova and Vidussi. Finally, Section~\ref{morse-theoretic-remarks} contains a few remarks about the existence of {\it perfect circle-valued Morse functions} and about alternative proofs of some of our results, relying purely on (real) Morse theory rather than on the theory of BNSR-invariants. 

\subsection*{Acknowledgements} We would like to thank Beno\^it Claudon and Vincent Koziarz who told us about the existence of fibrations with isolated critical points on Livn\'e's surfaces, and Bruno Martelli who pointed out to us the connection of our work to the existence of perfect circle-valued Morse functions. Finally, we thank again Beno\^it Claudon, for pointing out the reference~\cite{GrKi}. 


\section{Finiteness properties of groups}\label{sec:FinProps}

We already introduced the homotopical finiteness property $\mathscr{F}_n$, which requires that a group has a classifying space with finite $n$-skeleton. A second important set of finiteness properties are the homological finiteness properties. For an abelian unital ring $R$ we say that a group $G$ is of finiteness type $\FP_n(R)$ if there is a projective resolution
\[
    \dots \to P_n\to P_{n-1}\to \dots \to P_0\to R\to 0
\]
of the trivial $RG$-module $R$ which is finitely generated up to dimension $n$. Taking the free resolution induced by the cellular complex associated with a classifying space, it is easy to see that $\mathscr{F}_n$ implies property $\FP_n(R)$ for every $R$. In degree $1$, this is an equivalence: a group $G$ is of type $\FP_1(R)$ for some ring $R$ if and only if it is of type $\mathscr{F}_1$.  However, in higher dimensions Bestvina and Brady~\cite{BesBra-97} have shown that $\FP_n(R)$ does not imply $\mathscr{F}_n$ and, moreover, for different rings $R_1$ and $R_2$ the properties $\FP_n(R_1)$ and $\FP_n(R_2)$ are in general not equivalent. Finally, let us also mention that properties $\FP_n(\mathbb{Z})$ and $\mathscr{F}_2$ together imply $\mathscr{F}_n$. For a detailed introduction to finiteness properties, we refer the reader to \cite{Bro-82}.

An important source of examples of groups with exotic finiteness properties are kernels of homomorphisms onto free abelian groups. A key reason for this is that the homotopical finiteness properties of such kernels can be studied via Morse theoretical means. These properties are completely encoded by the so-called BNSR-invariants which we now introduce~\cite{BNS-87,BieRen-88,Renz-thesis}. There are also homological analogues of these invariants, but we shall not deal with them here.

The character sphere of a finitely generated group $G$ is the sphere
\[
 S(G):= \left( {\rm Hom}(G,\mathbb{R})-\left\{0\right\}\right)/\sim,
\]
where the equivalence relation $\sim$ is defined as follows. Two nonzero characters $\chi_1,~\chi_2: G\to \mathbb{R}$ are \emph{equivalent} if there is a real number $\lambda>0$ with $\chi_1=\lambda \cdot \chi_2$. For a group $G$ of type $\mathscr{F}_n$, one can define $n$ BNSR-invariants, which are subsets of the sphere $S(G)$. They are denoted by $\Sigma^{i}(G)$ ($1\le i \le n$) and form a decreasing sequence:
\[
  \Sigma^n(G)\subseteq \Sigma^{n-1}(G)\subseteq \dots \subseteq \Sigma^1(G)\subseteq S(G). 
\]

\noindent When $G$ admits a finite classifying space, the invariant $\Sigma^n (G)$ is defined for any natural integer $n$. The definition of these sets is related to the relative connectivity properties of certain ``half-spaces" associated to the characters, in the universal cover of a $K(G,1)$. Their precise definition is slightly technical. Since we will not work with it here, we omit it and refer to \cite{BNS-87, BieRen-88, BS-book}. We simply make two remarks:

$\bullet$ The invariant $\Sigma^1 (G)$, for $G$ a finitely generated group, can be defined quite simply as follows. One considers a Cayley graph $\Gamma$ associated to a finite symmetric generating subset of $G$, and a nonzero character $\chi : G \to \R$. One declares that $[\chi]\in \Sigma^{1}(G)$ if the subgraph of $\Gamma$ generated by the vertices where $\chi \ge 0$ is connected. See~\cite{BS-book} or~\cite[Ch. 11]{Py-book} for more details on this definition.   

$\bullet$ When $G$ is the fundamental group of a closed aspherical manifold $X$, any (nonzero) character $\chi$ is obtained by integration of a closed $1$-form $\alpha$ on $X$. On the universal cover $\widehat{X}$ of $X$, the pull-back of $\alpha$ is exact, and we can fix a primitive $f : \widehat{X}\to \R$ for it. The condition $[\chi] \in \Sigma^{k}(G)$ is then a condition on the behavior of the inclusion maps 
$$\{f\ge C\}\to \{ f \ge D\}$$
(for real numbers $C\ge D$) on homotopy groups in dimension $\le k-1$. See Appendix B in~\cite{BS-book} or Definition 12 in~\cite{LloPy-22}.  

The main properties that we will require here are summarised by the following results~\cite{BNS-87, BieRen-88, BS-book}.

\begin{proposition}\label{prop:BNSR}
    Let $G$ be a group of type $\mathscr{F}_n$. Then the following hold:
    \begin{enumerate}
        \item for $0\leq \ell \leq n$ the BNSR-invariant $\Sigma^{\ell}(G)$ is an open subset of $S(G)$;
        \item for $\chi: G\to \mathbb{Z}$ an integer-valued character, $[ \chi]\in \Sigma^{\ell}(G)\cap -\Sigma^{\ell}(G)$ if and only if $\ker(\chi)$ is of type $\mathscr{F}_{\ell}$.
    \end{enumerate}
\end{proposition}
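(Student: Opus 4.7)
The plan is to work with a $K(G,1)$-complex $X$ having finite $n$-skeleton (which exists by the $\mathscr{F}_n$ hypothesis), and to phrase the BNSR invariants in terms of Morse theory on its universal cover $\widehat{X}$. A nonzero character $\chi$ is realised by a closed $1$-form $\alpha$ on a cellular model of $X$, and its pullback to $\widehat{X}$ admits a $G$-equivariant primitive $f$ satisfying $f(g\cdot x)=f(x)+\chi(g)$. Membership $[\chi]\in \Sigma^{\ell}(G)$ then translates into the statement that the inclusions of super-level sets $\{f\ge C\}\hookrightarrow \{f\ge D\}$ for $C\ge D$ are eventually $(\ell-1)$-connected, in the standard ``essentially connected at infinity'' sense.

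For part (1), I would perturb $[\chi]\in \Sigma^{\ell}(G)$ to a nearby class $[\chi']$ and represent $\chi'$ by a closed $1$-form $\alpha'$ uniformly close to $\alpha$. The $G$-equivariant primitives $f$ and $f'$ then differ by a function whose oscillation over each translate of a fixed compact fundamental domain of the finite $n$-skeleton is bounded by a constant $C_{\varepsilon}$ tending to $0$ with the perturbation size $\varepsilon$. Since the connectivity defining $\Sigma^\ell$ is witnessed by only finitely many cells at a time, a standard ``sliding'' argument transfers the $(\ell-1)$-connectivity of super-level sets from $\chi$ to $\chi'$ once $\varepsilon$ is small enough. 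This generalises the well-known proof that $\Sigma^{1}(G)$ is open, where an analogous bounded-oscillation argument controls the subgraphs of the Cayley graph under perturbations of the character.

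For the forward direction of part (2), assume $N:=\ker(\chi)$ is of type $\mathscr{F}_{\ell}$. Since $\chi$ is integer-valued and $\mathbb{Z}$ is free, the extension $1\to N\to G\to \mathbb{Z}\to 1$ splits as $G=N\rtimes \mathbb{Z}$ for some monodromy automorphism of $N$. Realising this monodromy as a cellular self-homotopy-equivalence of a $K(N,1)$ with finite $\ell$-skeleton, its mapping torus is a $K(G,1)$. On the universal cover of this model, each super-level set $\{f\ge C\}$ deformation retracts onto any level set, so the inclusions $\{f\ge C\}\hookrightarrow \{f\ge D\}$ are homotopy equivalences, and in particular $(\ell-1)$-connected in both the upward and downward directions. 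This yields $[\pm\chi]\in \Sigma^{\ell}(G)$.

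For the converse, the integrality of $\chi$ lets $f$ descend to a circle-valued map $\bar{f}:N\backslash \widehat{X}\to S^{1}$, where $N\backslash \widehat{X}$ is a $K(N,1)$. The conjunction $[\chi],[-\chi]\in \Sigma^{\ell}(G)$ yields $(\ell-1)$-connectivity of the inclusion of a level set $\{\bar{f}=0\}$ into $N\backslash \widehat{X}$ from both sides. Combined with the finite $n$-skeleton downstairs, this can be leveraged to build a $K(N,1)$ with finite $\ell$-skeleton by an equivariant Morse-theoretic trimming in the spirit of Brown's criterion. The main obstacle is precisely this converse implication, which is the technical heart of the Bieri--Renz theorem: neither BNSR invariant alone suffices, and the combined finiteness data must be assembled with care into a single compact CW model. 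A detailed writeup would invoke the Bieri--Renz machinery from \cite{BieRen-88} rather than redoing this assembly by hand.
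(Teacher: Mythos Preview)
The paper does not actually prove Proposition~\ref{prop:BNSR}: it is stated as a summary of classical facts and attributed to \cite{BNS-87, BieRen-88, BS-book}, with no argument given. So there is no ``paper's own proof'' to compare against; your write-up is effectively a sketch of how the cited references establish these properties.

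As such a sketch, your outline is broadly in line with the standard arguments. A couple of minor remarks. For the forward direction of (2), when you pass to the mapping torus model, note that this model only has finite $\ell$-skeleton, not finite $n$-skeleton; this is harmless since $\Sigma^{\ell}$ is independent of the choice of model, but it is worth making explicit. Also, the essential connectivity condition defining $\Sigma^{\ell}$ can be checked on the infinite cyclic cover $N\backslash\widehat{X}$ rather than on $\widehat{X}$ itself, which is what makes the mapping-torus argument clean; your phrasing oscillates slightly between the two covers. For the converse of (2), you are right that this is the substantive part and that it is the content of the Bieri--Renz theorem; deferring to \cite{BieRen-88} is exactly what the paper does. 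Your openness argument for (1) is the right idea but, as you note, is only a sketch of the argument carried out in detail in \cite{BS-book}.
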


\begin{theorem}\label{thm:finiteness-high-rank} Let $k\ge 1$. Let $G$ be a group of type $\mathscr{F}_{n}$ and let $\chi : G \to \Z^k$ be a surjective homomorphism. Then the kernel of $\chi$ is of type $\mathscr{F}_{n}$ if and only if for every nonzero homomorphism $u : \Z^k \to \R$, $[u\circ \chi]\in \Sigma^{n}(G)$.
\end{theorem}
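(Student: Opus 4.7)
The plan is to reduce this statement to the rank-one case already recorded in Proposition~\ref{prop:BNSR}(2) via a standard covering-space and topology-at-infinity argument; this is in essence Renz's theorem characterising when a normal subgroup with free abelian quotient inherits the property $\mathscr{F}_n$ from the ambient group.

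First I would fix a $K(G,1)$ with finite $n$-skeleton $X$, with universal cover $\widetilde{X}$, and consider the intermediate cover $\overline{X}:=\widetilde{X}/\ker(\chi)$, which is a $K(\ker(\chi),1)$ carrying a free co-compact action of $\Z^k=G/\ker(\chi)$ on its $n$-skeleton. Then $\ker(\chi)$ is of type $\mathscr{F}_n$ if and only if $\overline{X}$ is homotopy equivalent to a CW-complex with finite $n$-skeleton, which by a Brown-type criterion amounts to an "essential $(n-1)$-connectivity at infinity" condition for the $\Z^k$-equivariant map $F:\overline{X}\to \R^k$ obtained by integrating a system of closed $1$-forms representing $\chi$.

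Second I would translate half-space versions of this condition into BNSR conditions. For any nonzero linear functional $u:\R^k\to \R$, the subcomplexes $\{u\circ F\ge C\}\subset \overline{X}$ lift to $\{u\circ \widetilde{F}\ge C\}\subset \widetilde{X}$, where $\widetilde{F}$ is a primitive on $\widetilde{X}$ of the closed $1$-form associated to $u\circ \chi$. According to the intrinsic description of BNSR invariants recalled after Proposition~\ref{prop:BNSR}, the fact that the inclusions between these superlevel sets are $(n-1)$-connected for large $C\ge D$ is exactly the statement $[u\circ \chi]\in \Sigma^{n}(G)$. Restricting $u$ to integer-valued $u$ that factor through a projection $\Z^k\to \Z$ and using Proposition~\ref{prop:BNSR}(2) one then obtains the easy direction: if $\ker(\chi)$ is $\mathscr{F}_n$ then so is the kernel of any such $u\circ \chi$, hence $[u\circ \chi]\in \Sigma^{n}(G)\cap -\Sigma^{n}(G)$, and by density plus openness from Proposition~\ref{prop:BNSR}(1) the containment $[u\circ \chi]\in \Sigma^n(G)$ extends to all nonzero real $u$.

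For the converse direction — which is the main obstacle — one has to assemble the direction-wise $(n-1)$-connectivity statements over the whole sphere of directions $S^{k-1}\subset (\R^k)^*$ into a single $(n-1)$-connectivity-at-infinity statement for $F:\overline{X}\to \R^k$. Using the openness of $\Sigma^n(G)$ from Proposition~\ref{prop:BNSR}(1) and compactness of $S^{k-1}$, one covers $S^{k-1}$ by finitely many open cones on each of which a uniform filling of any $i$-sphere ($i\le n-1$) into a superlevel set $\{u\circ F\ge D\}$ is available. One then glues these direction-wise fillings into fillings inside preimages by $F$ of complements of large balls in $\R^k$, proving that $\overline{X}$ is essentially $(n-1)$-connected at infinity in the $\Z^k$-direction and hence that $\ker(\chi)$ is of type $\mathscr{F}_n$. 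The delicate point is precisely this patching: in the rank-one case only two opposite directions need to be reconciled, which explains the symmetric condition $\Sigma^n\cap -\Sigma^n$ in Proposition~\ref{prop:BNSR}(2), whereas here the whole $(k-1)$-sphere of directions must be handled simultaneously.
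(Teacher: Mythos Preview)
The paper does not prove Theorem~\ref{thm:finiteness-high-rank}; it is quoted as background from the foundational references~\cite{BNS-87, BieRen-88, BS-book} (this is essentially Renz's theorem). So there is no ``paper's own proof'' to compare against, and your proposal is supplying an argument where the paper supplies none.

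Your outline is the standard one and is correct in spirit. Two comments.

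For the forward direction, the implication ``$\ker(\chi)$ is $\mathscr{F}_n$ $\Rightarrow$ $\ker(u\circ\chi)$ is $\mathscr{F}_n$'' for an integer-valued $u$ is not quite a tautology: one has $\ker(u\circ\chi)/\ker(\chi)\cong \ker(u)\cong \Z^{k-1}$, and then uses that $\mathscr{F}_n$ is stable under extensions by groups of type $\mathscr{F}_n$. You should make this step explicit. After that, your density-plus-openness argument via Proposition~\ref{prop:BNSR}(1) is fine.

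For the converse, your description is the right shape but is precisely where all the work lies, and as written it is only a heuristic. In the actual proofs (Bieri--Renz, or the treatment in~\cite{BS-book}) the passage from ``each half-space filtration is essentially $(n-1)$-connected'' to ``the ball-complement filtration on $\overline{X}$ is essentially $(n-1)$-connected'' is not done by naively gluing fillings over a finite cover of $S^{k-1}$; one needs a controlled argument (typically an induction on $k$, or a careful use of the $\Sigma$-criterion together with the geometry of polyhedral norms on $\R^k$) to ensure that the various half-space fillings can be made compatible. Your sketch would need to be substantially expanded here to become a proof, but the strategy you describe is the standard one and is sound.
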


Let us now elaborate on Theorem~\ref{thm:finiteness-high-rank} and introduce the topology on the space of surjective morphisms $G \to \Z^k$ that was alluded to in the introduction. Let $\chi$ and $G$ be as in Theorem~\ref{thm:finiteness-high-rank}. The classes
$$[u\circ \chi],$$
with $u\in {\rm Hom}(\mathbb{Z}^k,\mathbb{R})-\{0\}$, form a $(k-1)$-dimensional subsphere $S(\chi)$ of $S(G)$. Theorem~\ref{thm:finiteness-high-rank} then says that the kernel of $\chi$ is of type $\mathscr{F}_{n}$ if and only if the sphere $S(\chi)$ is contained in $\Sigma^{n}(G)$. Note that $S(\chi)$ depends only on the kernel of $\chi$ and it determines that kernel. Hence if $N$ is a normal subgroup of $G$ such that the quotient $G/N$ is isomorphic to $\Z^k$, we will write $S(N)$ for the sphere $S(\chi)$  where $\chi : G \to \Z^k$ is any surjection obtained by composing the projection $G \to G/N$ with an isomorphism between $G/N$ and $\Z^k$. When $k$ is fixed, we then define a topology on the space of subgroups $N\lhd G$ such that $G/N$ is isomorphic to $\Z^k$. For two such subgroups $N_1$ and $N_2$, we say that $N_1$ is close to $N_2$ if the sphere $S(N_1)$ is contained in a small enough neighbourhood of $S(N_2)$. The space of surjective morphisms $\chi : G \to \Z^k$ is then endowed with the smallest topology making the map $\chi \mapsto {\rm ker}(\chi)$ continuous. Since the BNSR-invariants are open sets, the following proposition is immediate. 

\begin{proposition}\label{prop:openess} Let $G$ be a group of type $\mathscr{F}_{n}$. Let $k\ge 1$ be a natural number. The set of surjective morphisms $\chi : G \to \Z^k$ whose kernel is of type $\mathscr{F}_{n}$ is open. 
\end{proposition}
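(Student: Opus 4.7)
The plan is to unwind the definition of the topology on surjective morphisms $G \to \Z^k$ and combine it with Theorem~\ref{thm:finiteness-high-rank} and the openness part of Proposition~\ref{prop:BNSR}.

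First I would fix a surjective morphism $\chi_0 : G \to \Z^k$ whose kernel is of type $\mathscr{F}_n$, and recall that by Theorem~\ref{thm:finiteness-high-rank} this is equivalent to the inclusion
\[
S(\chi_0) \subset \Sigma^n(G),
\]
where $S(\chi_0)$ is the $(k-1)$-dimensional subsphere of $S(G)$ consisting of the classes $[u \circ \chi_0]$ for $u \in {\rm Hom}(\Z^k, \R) - \{0\}$. The next step is to observe that $S(\chi_0)$ is a closed (hence compact) subset of the compact sphere $S(G)$, while $\Sigma^n(G)$ is open by Proposition~\ref{prop:BNSR}(1). A standard compactness argument then yields an open neighbourhood $V \subset S(G)$ of $S(\chi_0)$ contained in $\Sigma^n(G)$.

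Now I would invoke the definition of the topology on morphisms recalled just before the statement: a basic neighbourhood of $\chi_0$ consists of surjective morphisms $\chi : G \to \Z^k$ whose kernel $N = \ker(\chi)$ satisfies $S(N) = S(\chi) \subset V$. For any such $\chi$, the subsphere $S(\chi)$ lies in $V \subset \Sigma^n(G)$, and applying Theorem~\ref{thm:finiteness-high-rank} in the reverse direction gives that $\ker(\chi)$ is of type $\mathscr{F}_n$. This shows that the set in question contains a neighbourhood of each of its points and is therefore open.

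The only subtlety — more bookkeeping than obstacle — is to confirm that ``$S(\chi) \subset V$'' really does define a neighbourhood basis for $\chi_0$ in the quotient topology on morphisms; this follows directly from the stated prescription that $\chi \mapsto \ker(\chi)$ be continuous, combined with the fact that $V$ can be chosen to be any open neighbourhood of $S(\chi_0)$. No further use of the $\mathscr{F}_n$ assumption beyond guaranteeing that the sets $\Sigma^{\ell}(G)$ for $\ell \le n$ are defined and open is required.
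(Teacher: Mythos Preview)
Your proposal is correct and follows exactly the route the paper has in mind; the paper in fact states the proposition as ``immediate'' from the openness of the BNSR-invariants, and your argument is simply a careful unwinding of that remark via Theorem~\ref{thm:finiteness-high-rank} and the definition of the topology. One minor simplification: the compactness step is unnecessary, since $\Sigma^n(G)$ is itself an open neighbourhood of $S(\chi_0)$, and you may take $V = \Sigma^n(G)$ directly.
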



\section{Construction methods from maps to complex tori}
\label{sec:CxTori}

A common denominator of most of the existing constructions from complex geometry for groups with exotic finiteness properties is that they start from a holomorphic map to a complex torus. We will now describe the two main methods of this kind. The first starts from a holomorphic map with isolated singularities to a one-dimensional torus, while the second requires a finite map to a torus of arbitrary dimension.

\subsection{K\"ahler groups from maps with isolated singularities}

The first construction of K\"ahler groups with arbitrary exotic finiteness properties is due to Dimca, Papadima and Suciu \cite{DimPapSuc-09-II}. Their construction starts from an elliptic curve $E$ and $n\geq 3$ ramified double covers $f_i:S_{g_i}\to E$ where $S_{g_{i}}$ is a Riemann surface of genus $g_i\geq 2$. They show that for the map $f=\sum_{i=1}^{n} f_{i} :S_{g_1}\times \dots \times S_{g_n}\to E$ obtained by summation in $E$, the fundamental group $H=\pi_1(f^{-1}(p))$ of a generic fibre of $f$ is of type $\mathscr{F}_{n-1}$ and not ${\rm FP}_{n}(\Q)$ and is canonically isomorphic to $\ker\left(f_{\ast}: \pi_1(S_{g_1})\times \dots \times \pi_1(S_{g_n})\to \pi_1(E)\cong \mathbb{Z}^2\right)$. This construction has since been extended by Llosa Isenrich \cite{Llo-16-II,Llo-17}, Bridson and Llosa Isenrich \cite{BriLlo-16} and Nicol\'as and Py \cite{NicPy-21}, showing its flexibility.

The main results from~\cite{DimPapSuc-09-II, NicPy-21} can be summarised as follows. 

\begin{theorem}[{Dimca--Papadima--Suciu \cite[Theorem C]{DimPapSuc-09-II}, Nicol\'as--Py \cite[Theorem B]{NicPy-21}}]\label{thm:dpsnp}
\label{thm:NP-DPS}
 Let $M$ be an $n$-dimensional aspherical compact complex manifold with $n\geq 3$, let $S$ be a closed Riemann surface of positive genus and let $f:M\to S$ be a holomorphic map with isolated critical points and connected fibres. Assume that $f$ has at least one critical point. Let $F$ be a smooth generic fibre of $f$. Then $\pi_1(F)$ is of finiteness type $\mathcal{F}_{n-1}$, but not ${\rm FP}_{n}(\Q)$, and is canonically isomorphic to $\mathrm{ker}(f_{\ast}:\pi_1(M)\to \pi_1(S))$.
\end{theorem}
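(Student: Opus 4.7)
The plan is to prove the three claims of the theorem in sequence—the canonical isomorphism, the positive finiteness property $\mathscr{F}_{n-1}$, and the failure of $\mathrm{FP}_n(\mathbb{Q})$—all by lifting $f$ to the cover of $M$ associated to $K := \ker(f_\ast)$ and running complex Morse theory there. For the isomorphism $\pi_1(F) \cong K$, surjectivity of $f_\ast: \pi_1(M) \to \pi_1(S)$ is immediate from the hypothesis of connected fibres. Removing the finite critical set makes $f$ into a smooth fibre bundle by Ehresmann's theorem, and the resulting long exact homotopy sequence identifies $\ker(\pi_1(M_0) \to \pi_1(S_0))$ with the image of $\pi_1(F)$, where $M_0, S_0$ denote the regular loci. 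Since $M \setminus M_0$ is a finite set in a manifold of real dimension $\ge 6$, one has $\pi_1(M_0) = \pi_1(M)$; and the extra meridians produced when replacing $S$ by $S_0$ pull back, via the complex Morse local model $f = z_1^2 + \cdots + z_n^2$, to loops supported near the singular fibres, which are $\pi_1$-equivalent to $F$ because the collapsed vanishing sphere has dimension $n-1 \ge 2$. So these meridians lie in the image of $\pi_1(F) \to \pi_1(M)$, which gives the asserted isomorphism.

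For $\mathscr{F}_{n-1}$, let $\widetilde{S}$ be the universal cover of $S$—topologically an open disk, since $S$ has positive genus—and set $\widetilde{M} := M \times_S \widetilde{S}$. Then $\widetilde{M}$ is aspherical with $\pi_1(\widetilde{M}) = K$, and the lifted map $\widetilde{f}: \widetilde{M} \to \widetilde{S}$ is holomorphic with isolated critical points. Choose a nested exhaustion $D_0 \subset D_1 \subset \cdots$ of $\widetilde{S}$ by topological disks, with $D_0$ a small neighbourhood of a regular base value and each $D_{j+1} \setminus D_j$ containing exactly one critical value of $\widetilde{f}$. By Picard--Lefschetz (equivalently, by the local Milnor fibration at a complex Morse singularity), each step attaches a single real $n$-cell—the Lefschetz thimble—glued along the vanishing $(n-1)$-sphere in the fibre. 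In the colimit, $\widetilde{M}$ is homotopy equivalent to a CW complex obtained from $F$ by attaching $n$-cells only. Since $F$ is a closed complex $(n-1)$-manifold, hence a finite CW complex, this model has finite $(n-1)$-skeleton; together with the asphericity of $\widetilde{M}$, this proves $K$ is of type $\mathscr{F}_{n-1}$.

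The failure of $\mathrm{FP}_n(\mathbb{Q})$ follows from the same CW model by a cell count. If $K$ were of type $\mathrm{FP}_n(\mathbb{Q})$, then $H_n(K, \mathbb{Q}) = H_n(\widetilde{M}, \mathbb{Q})$ would be a subquotient of $P_n \otimes_{\mathbb{Q}K} \mathbb{Q}$ for some finitely generated projective $\mathbb{Q}K$-module $P_n$, and hence finite-dimensional over $\mathbb{Q}$. In the cellular chain complex of the model above, however, $C_n(\widetilde{M}) = C_n(F) \oplus T$, where $T$ carries one $\mathbb{Q}$-summand per critical point of $\widetilde{f}$ (an infinite set, since each critical point of $f$ lifts to a $\pi_1(S)$-orbit); no $(n+1)$-cells outside of $F$'s own are added, and the restriction of $\partial_n$ to $T$ lands in the finite-dimensional $C_{n-1}(F)$. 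Therefore $\ker(\partial_n) \cap T$ is infinite-dimensional and injects, modulo $\mathrm{im}\, \partial_{n+1} \subseteq C_n(F)$, into $H_n(\widetilde{M}, \mathbb{Q})$, contradicting finite-dimensionality. The main obstacle, and the only part that requires genuine care, is setting up the Picard--Lefschetz step globally over the infinite cover $\widetilde{M}$ and verifying that only $n$-cell attachments occur, so that both the skeletal argument and the cell count at the end remain clean.
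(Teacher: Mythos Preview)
The paper does not prove this theorem; it is quoted from Dimca--Papadima--Suciu and Nicol\'as--Py. Your overall strategy---passing to the cover $\widetilde{M}$ corresponding to $K$ and analysing it via Lefschetz thimble attachments---is the approach of those references, and the paper runs the same scheme in a related setting (Propositions~\ref{prop:attaching-n-cells} and~\ref{prop:infinite-homology}). Your Part~3 cell-count is a clean variant of the Mayer--Vietoris argument used there.

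Two points need repair. First, your Part~1 conflates two different open sets. If $M_0=f^{-1}(S_0)$ so that $f|_{M_0}$ is a fibre bundle by Ehresmann, then $M\setminus M_0$ is a union of singular \emph{fibres} (real codimension~$2$), not a finite set, and your claim $\pi_1(M_0)=\pi_1(M)$ fails as stated. If instead $M_0=M\setminus\mathrm{crit}(f)$ so that the complement is finite, then $f|_{M_0}$ is a submersion but is not proper, and Ehresmann does not apply. The cleanest fix is to skip this and read the isomorphism directly off your Part~2: since $\widetilde{M}$ is obtained from $F$ by attaching cells of dimension $n\ge 3$, the inclusion $F\hookrightarrow\widetilde{M}$ induces an isomorphism on $\pi_1$, while $\pi_1(\widetilde{M})=K$ by construction of the cover.

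Second, the hypothesis is \emph{isolated} critical points, not nondegenerate ones, so the quadratic local model $z_1^2+\cdots+z_n^2$ you invoke need not hold. Either replace the single Lefschetz thimble by Milnor's description (the local Milnor fibre of an isolated hypersurface singularity is a bouquet of $(n-1)$-spheres, so crossing a critical value attaches finitely many $n$-cells rather than one; this leaves both your skeletal argument for $\mathscr{F}_{n-1}$ and your cell-count for infinite $H_n$ intact), or first perturb to a genuinely Morse situation as in Proposition~\ref{prop:realmorsety} and~\cite{NicPy-21}. With these corrections your argument is complete.
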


\begin{remark}
 Nicol\'as and Py prove that $\mathrm{ker}(f_{\ast})$ is not of type ${\rm FP}_{n}(\Q)$ using properties of isolated singularities. If $M$ has a nonzero $n$-th $\ell^2$-Betti number and $S$ has genus $1$, then this also follows from \cite[Proposition 14]{LIMP-21}. The nonvanishing of the middle-dimensional $\ell^2$-Betti number occurs for instance if $M$ is K\"ahler hyperbolic with nonzero Euler characteristic \cite{Gro-91,Pan-96}. However, we emphasise that Theorem \ref{thm:NP-DPS} applies in a more general context.
\end{remark}

We observe that in the context of Theorem~\ref{thm:dpsnp}, the kernel of $f_{\ast}$ is a K\"ahler group, being isomorphic to the fundamental group of a generic fibre of $f$. This will not be the case in general for the groups constructed in the next section. We will return to this question in Section~\ref{sec:zeetwo:second}. 

Note that there are generalisations of Theorem \ref{thm:NP-DPS} which relax the conditions that $f$ has isolated singularities and that the image of $f$ is one-dimensional, see \cite[Theorem 2.2]{BriLlo-16} and  \cite[Section 2]{Llo-17}. 

\subsection{Subgroups of K\"ahler groups from finite maps}\label{sec:skgffm}
In \cite{LloPy-22}, the authors of this work presented a second construction method of subgroups of K\"ahler groups with exotic finiteness properties and employed it to show that for every $n\geq 2$ there is a subgroup of a hyperbolic group of type $\mathscr{F}_{n-1}$ but not ${\rm FP}_{n}(\Q)$. This approach is based on ideas of Simpson who studied connectivity properties of sublevel sets of certain harmonic functions $f:\widehat{X}\to \mathbb{R}$ obtained by lifting a harmonic $1$-form to the universal covering of a compact K\"ahler manifold and taking a primitive. See~\cite{Sim-93}, as well as~\cite{LiMaWa-21} for related results. The following theorem summarises the results from \cite{LloPy-22} that we will require here.

\begin{theorem}\label{thm:LP-BNSR}
Let $X$ be a closed aspherical K\"ahler manifold of complex dimension $n\geq 2$. Assume that there exists a finite holomorphic map from $X$ to a complex torus. Then there is a nonzero character $\chi: \pi_1(X)\to \mathbb{Z}$ with kernel of type $\mathscr{F}_{n-1}$. If, moreover, the Euler characteristic of $X$ is non-trivial, then $\ker(\chi)$ is not of type ${\rm FP}_n(\Q)$.
\end{theorem}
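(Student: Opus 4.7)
The plan is to apply Proposition~\ref{prop:BNSR}(2): it suffices to exhibit a nonzero integer-valued character $\chi : \pi_1(X) \to \Z$ such that both $[\chi]$ and $[-\chi]$ lie in $\Sigma^{n-1}(\pi_1(X))$. The finiteness assumption on the holomorphic map $F : X \to T$ is used to produce sufficiently many holomorphic $1$-forms on $X$: pulling back $H^0(T, \Omega^1_T)$ by $F$ yields an $n$-dimensional subspace $V \subset H^0(X, \Omega^1_X)$, and since the differential of $F$ is injective on a Zariski-open subset of $X$, the forms in $V$ generate the holomorphic cotangent space at a generic point. I would pick a generic $\alpha \in V$ whose real cohomology class $[\mathrm{Re}(\alpha)]$ is rational, and after rescaling let $\chi$ be the associated integer character. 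Then the vanishing locus $\{\alpha = 0\} \subset X$ is a proper complex analytic subvariety, in particular of real codimension at least two.

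Lifting $\mathrm{Re}(\alpha)$ to the universal cover $\widehat{X}$ and fixing a primitive $f : \widehat{X} \to \R$ gives a pluriharmonic function whose local model, at any point where $\alpha \neq 0$, is $\mathrm{Re}(z_1)$ in suitable holomorphic coordinates; in particular, generic level sets are Levi-flat hypersurfaces foliated by complex codimension-one subvarieties. Following the Simpson-type analysis carried out in~\cite{LloPy-22} (compare also~\cite{Sim-93, LiMaWa-21}), I would show that for all $C \ge D$, the inclusion $\{f \ge C\} \hookrightarrow \{f \ge D\}$ induces isomorphisms on homotopy groups up to degree $n - 2$. By the description of $\Sigma^{k}(\pi_1(X))$ recalled after Proposition~\ref{prop:BNSR}, this yields $[\chi] \in \Sigma^{n-1}(\pi_1(X))$; replacing $\alpha$ by $-\alpha$ gives $[-\chi] \in \Sigma^{n-1}(\pi_1(X))$, and Proposition~\ref{prop:BNSR}(2) concludes that $\ker(\chi)$ is of type $\mathscr{F}_{n-1}$.

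For the non-$\FP_n(\Q)$ assertion under the hypothesis $\chi(X) \neq 0$, I would argue by contradiction. If $\ker(\chi)$ were of type $\FP_n(\Q)$, the homological analogue of Proposition~\ref{prop:BNSR}(2) would place $[\pm \chi]$ in the $n$-th homological BNSR-invariant with $\Q$-coefficients. Combined with the connectivity already established in lower degrees, this forces the vanishing of the $\chi$-twisted Novikov Betti numbers of $X$ in all degrees, and an alternating-sum identity of Atiyah type then computes $\chi(X) = 0$, a contradiction.

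The main obstacle is the connectivity statement for sublevel sets in $\widehat{X}$: the critical set of $f$ is generically a positive-dimensional complex subvariety, not a discrete Morse critical set, so standard real Morse theory does not apply directly. The essential input, extracted from Simpson's pluriharmonic Morse lemma and made quantitative in~\cite{LloPy-22} via the K\"ahler identities, is that every ``handle'' attached when passing from $\{f \ge D\}$ to $\{f \ge C\}$ effectively has real index at least $n$, so no new topology appears in degrees below $n-1$. The finiteness of $F$ is precisely what keeps the zero locus of $\alpha$ in sufficiently high codimension to make this picture robust.
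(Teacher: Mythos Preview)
Your outline follows the same broad strategy as the paper---produce a class in $\Sigma^{n-1}\cap -\Sigma^{n-1}$ via a holomorphic $1$-form and then invoke Proposition~\ref{prop:BNSR}(2)---but there is a genuine gap in the geometric input. You assert only that the zero locus of a generic $\alpha\in V$ has real codimension $\ge 2$, and later explicitly say that ``the critical set of $f$ is generically a positive-dimensional complex subvariety''. This is precisely where the finiteness of $F$ does more work than you are using: by~\cite[Propositions~14 and~18]{LloPy-22}, a \emph{generic} $\alpha\in V=F^{\ast}H^{0}(T,\Omega^{1}_{T})$ has \emph{finitely many} zeros. With isolated zeros, the Simpson-type analysis you allude to (which is~\cite[Theorem~6]{LloPy-22}) applies directly and cleanly; there is no need to argue with positive-dimensional critical loci, and your sketch of how one would do so (``every handle effectively has real index at least $n$'') is not justified by anything in the cited literature. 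So the main obstacle you identify is not actually present once the correct genericity statement is invoked.

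A second, smaller point: the paper's proof uses the openness of the BNSR-invariants to pass from the possibly irrational class $[{\rm Re}(\alpha)]$ (where $\alpha$ has finitely many zeros) to a nearby rational one. Your alternative, choosing $\alpha$ simultaneously generic and with rational real part, can be made to work since the finitely-many-zeros condition is Zariski open in $V$ and the rational classes are Euclidean dense in ${\rm Re}(V)=F^{\ast}H^{1}(T,\R)$, but you should say this explicitly rather than just writing ``generic $\alpha$ with rational $[{\rm Re}(\alpha)]$''.

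Finally, your non-${\rm FP}_n(\Q)$ argument via Novikov Betti numbers is a different route from the paper, which instead uses that $X$ satisfies the Singer conjecture (again a consequence of the existence of a holomorphic $1$-form with isolated zeros, via~\cite[Theorem~10]{LloPy-22}) and then applies~\cite[Proposition~14]{LIMP-21} on $\ell^{2}$-Betti numbers. Your sketch would need a precise statement linking ${\rm FP}_n(\Q)$ for $\ker(\chi)$ to vanishing of \emph{all} Novikov Betti numbers (not just those in degree $\le n$), presumably via Poincar\'e duality on the closed $2n$-manifold $X$; as written it is too vague to assess.
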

\begin{proof}
 This is an immediate consequence of combining \cite[Theorem 6]{LloPy-22}, \cite[Proposition 18]{LloPy-22}, and the openness of the BNSR-invariant.
\end{proof}

Under the assumptions of Theorem \ref{thm:LP-BNSR}, there are in fact many characters with kernels having exotic finiteness properties:

\begin{addendum}\label{add-to-thm:LP-BNSR}
    Under the assumptions of Theorem \ref{thm:LP-BNSR}, $\Sigma^{n-1}(\pi_1(X))\cap  -\Sigma^{n-1}(\pi_1(X))\subseteq S(\pi_{1}(X))$ is a dense open subset. In particular, the set of characters $\chi:\pi_1(X)\to \mathbb{Z}$ with $\ker(\chi)$ of type $\mathcal{F}_{n-1}$ is dense in $S(\pi_{1}(X))$. If, moreover, the Euler characteristic of $X$ is non-zero, then $\ker(\chi)$ is not of type ${\rm FP}_n(\mathbb{Q})$ for any character $\chi$.
\end{addendum}

The first two sentences of the addendum are consequences of~\cite[Theorem 6]{LloPy-22} and~\cite[Proposition 18]{LloPy-22}, together with the properties of the BNSR-invariants recalled in Section~\ref{sec:FinProps}. We now justify the last sentence of the addendum. Under the running assumptions, the manifold $X$ carries a holomorphic $1$-form with finitely many zeros. This follows again from~\cite[Theorem 6]{LloPy-22}. Theorem 10 from~\cite{LloPy-22} implies that $X$ satisfies the conclusion of Singer's conjecture, i.e. the $\ell^{2}$-Betti numbers $b^{(2)}_{i}(X)$ vanish except for $i=n$. We thus have $\chi (X)=(-1)^{n}b_{n}^{(2)}(X)$. Proposition 14 from~\cite{LIMP-21} then yields that the kernel of an arbitrary character of $\pi_{1}(X)$ is not of type ${\rm FP}_{n}(\Q)$ if $X$ has nonzero Euler characteristic.

\begin{remark} Let $X$ be a closed K\"ahler manifold of complex dimension $n$, with finite Albanese map $a_{X} : X\to A(X)$. If $X$ is aspherical, Theorem 10 from~\cite{LloPy-22} implies that $X$ satisfies Singer's conjecture. This can actually be proved without assuming $X$ to be aspherical. Indeed, if $[\omega]$ is a K\"ahler class on $A(X)$, then $a_{X}^{\ast}[\omega]$ is a K\"ahler class on $X$ by~\cite[Prop. 3.6]{GrKi}. Any differential form in the class $[\omega]$ admits, after pull-back to the universal cover of $A(X)$, a primitive of  at most linear growth. Hence the same is true for forms in the class $a_{X}^{\ast}[\omega]$, lifted to the universal cover of $X$. The desired conclusion then follows from~\cite{CaXa}. 
\end{remark}


\section{The Stover--Toledo groups and their subgroups}
\label{sec:ST-Examples}

Stover and Toledo recently constructed, in all dimensions $\geq 2$, smooth complex projective varieties admitting a K\"ahler metric of negative sectional curvature, which are not homotopy equivalent to a locally symmetric manifold, see~\cite{StoTol-21-II}. These are the first such examples in dimension $\geq 4$. Earlier examples had been constructed in dimension $2$ and $3$ by Mostow--Siu, Deraux and Zheng~\cite{MoSi-80, zheng1, zheng2, Deraux-04, Deraux-05}. Stover and Toledo's examples are obtained by ramified cover of suitable congruence covers of arithmetic ball quotients of the simplest type. We summarise here their work and then apply the construction described in Section~\ref{sec:skgffm} to the fundamental groups of the corresponding negatively curved K\"ahler manifolds, yielding a proof of Theorem~\ref{thm:ST-Intro}.

\subsection{Complex ball quotients and the Stover--Toledo construction}\label{subsec:stto}

For $m\geq 1$ we denote  by ${\rm PU}(m,1)$ the group of holomorphic isometries of the unit ball $\mathbb{B}^m_{\C}$ of $\C^m$ equipped with the Bergman metric. We will consider cocompact lattices $\Gamma< {\rm PU}(m,1)$ which are \emph{arithmetic}. We refer the reader to~\cite{Mar-91, Zim} for the definition of this notion. More specifically,  we will be interested in uniform arithmetic lattices of the \emph{simplest type}, whose definition we now recall. 

Let $F\subset \R$ be a totally real number field, let $E\subset \C$ be a purely imaginary quadratic extension of $F$ and let $V=E^{n+1}$. Assume that we are given a Hermitian form $H: V\times V \to E$ such that its extension to $V\otimes \C$ has signature $(m,1)$. 

Given an embedding $\sigma: E\to \C$ we denote by $H^{\sigma}$ the Hermitian form on $\C^{n+1}$ obtained by applying $\sigma$ to the coefficients of the matrix representing $H$ in the canonical basis of $V$. Assume that for every embedding $\sigma: E\to \C$ with $\sigma|_F\neq {\rm id}|_F$, the twisted Hermitian form $H^{\sigma}$ has signature $(m+1,0)$. Let $\mathcal{O}_E$ be the ring of integers of $E$ and let $U(H,\mathcal{O}_E)$ be the group of $(m+1)\times (m+1)$-matrices with coefficients in $\mathcal{O}_E$ which preserve $H$. Then $U(H,\mathcal{O}_E)$ is a lattice in the group $U(V\otimes \C,H)$ of automorphisms of $(V\otimes \C, H)$.  It is uniform if and only if $F\neq \Q$. We call a lattice $\Gamma < {\rm PU}(n,1)$ of the \emph{simplest type} if it is commensurable to a lattice of the form $U(H,\mathcal{O}_E)$.

For $m\geq 2$, let $X=\Gamma \backslash \mathds{B}^m_{\C}$ be a smooth compact complex hyperbolic $m$-manifold. We call a pair $(X,D)$ a \emph{good pair} if $D=D_1\cup D_2\cup \dots \cup D_k\subset X$ is a non-trivial divisor such that the $D_i$ are pairwise non-intersecting smooth codimension one subvarieties of $X$. We call $D$ \emph{totally geodesic} if the embeddings of the $D_i$ in $X$ are totally geodesic. 

 If $\Gamma$ is of the simplest type, then $X$ admits totally geodesically immersed divisors. Up to passing to a finite cover of $X$, we can find such divisors which are embedded. We will require the following more precise version of this result (see~\cite[Section 5]{StoTol-21-II}).
\begin{lemma}\label{lem:totgeod}
 Let $\Gamma<{\rm PU}(m,1)$ be a torsion-free congruence arithmetic lattice of the simplest type and let $X=\Gamma \backslash \mathbb{B}^m_{\C}$. Then there exists a finite congruence cover $p:X'\to X$ and a divisor $D'\subset X'$ such that $(X',D')$ is a totally geodesic good pair. 
\end{lemma}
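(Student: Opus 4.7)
The plan is to construct $D'$ as (a subset of) the preimage in a congruence cover $X'\to X$ of a single totally geodesic immersed complex hypersurface $D_0 \to X$ built from the arithmetic data. First I would produce $D_0$, then pass to a deep enough congruence cover to make all of its lifts embedded and pairwise disjoint.

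For the construction of $D_0$: after replacing $\Gamma$ by a commensurable congruence subgroup we may assume $\Gamma \leq U(H,\mathcal{O}_E)$ for a Hermitian form $H$ on $V=E^{m+1}$ of signature $(m,1)$, with $H^\sigma$ positive definite at every non-identity embedding $\sigma$ of $F$. Fix $v\in V$ with $H(v,v)>0$; then $W = v^\perp$ carries a restriction of $H$ of signature $(m-1,1)$, defining a totally geodesic complex subball $\mathbb{B}_W \subset \mathbb{B}^m_{\C}$. The stabiliser $\Lambda := \mathrm{Stab}_\Gamma(\mathbb{B}_W)$ is an arithmetic lattice of the simplest type in the corresponding ${\rm PU}(m-1,1)$, uniform since $F\neq\Q$, and the quotient $\Lambda\backslash \mathbb{B}_W$ maps to $X$ as an immersed compact totally geodesic complex hypersurface $i : D_0 \to X$.

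The obstructions to embeddedness, both of $i$ itself and of the preimage of $i(D_0)$ in any finite cover, are recorded by elements $\gamma \in \Gamma\setminus\Lambda$ with $\gamma\mathbb{B}_W \cap \mathbb{B}_W \neq \emptyset$. Because $X$ is compact and $\Gamma$ acts properly discontinuously on $\mathbb{B}^m_{\C}$, the set of such $\gamma$ splits into finitely many $\Lambda$-double cosets; pick representatives $\gamma_1,\dots,\gamma_N$. Since $\Gamma$ is residually finite in its congruence topology, I can choose a congruence normal subgroup $\Gamma'\lhd \Gamma$ of finite index such that $\Gamma' \cap (\Lambda \gamma_j \Lambda) = \emptyset$ for every $j=1,\dots,N$.

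In the cover $X' = \Gamma'\backslash \mathbb{B}^m_{\C}$, the preimage $p^{-1}(i(D_0))$ then decomposes as a disjoint union of embedded totally geodesic complex hypersurfaces, indexed by the double cosets in $\Gamma'\backslash\Gamma/\Lambda$, with smoothness following from torsion-freeness of $\Gamma'$. Taking $D'$ to be this preimage (or any nonempty union of its components) produces the desired totally geodesic good pair. The main technical point is verifying that the bad set of double cosets is truly finite and that a single congruence subgroup can simultaneously avoid all of them; this is the standard arithmetic separation argument, relying on compactness of $X$ together with the congruence structure coming from $\Gamma$ being of the simplest type.
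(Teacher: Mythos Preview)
The paper gives no proof of this lemma; it simply refers the reader to \cite[Section 5]{StoTol-21-II}. Your outline is exactly the standard argument one finds there, so in approach you agree with the cited source.

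Two points deserve sharpening. First, ``$\Gamma$ is residually finite in its congruence topology'' does not by itself yield $\Gamma' \cap (\Lambda\gamma_j\Lambda)=\emptyset$. For normal $\Gamma'$ this condition is equivalent to $\gamma_j \notin \Gamma'\Lambda$, i.e.\ to separating $\gamma_j$ from the \emph{subgroup} $\Lambda$ (not from the identity) by a congruence quotient. What makes this work is that $\Lambda$ is the set of $\mathcal{O}_E$-points of the algebraic stabiliser of the line $E\cdot v$: since $\gamma_j v$ and $v$ are $E$-linearly independent, they remain independent modulo almost every prime $\mathfrak{p}$, and then no element of $\Gamma(\mathfrak{p})\Lambda$ can equal $\gamma_j$. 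You gesture at this at the end (``the standard arithmetic separation argument''), but the earlier sentence undersells what is needed.

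Second, the condition $\Gamma'\cap\mathcal{B}=\emptyset$ (where $\mathcal{B}=\bigcup_j \Lambda\gamma_j\Lambda$) makes each irreducible component of $p^{-1}(i(D_0))$ smoothly embedded, but it does \emph{not} make distinct components disjoint. Indeed, one checks that $\Gamma'\cap\mathcal{B}=\emptyset$ forces $\mathcal{B}\cap\Gamma'\Lambda=\emptyset$, so every $\delta\in\mathcal{B}$ lies in a different $\Gamma'$--$\Lambda$ double coset from the identity; the components through $\mathbb{B}_W$ and $\delta\mathbb{B}_W$ are then distinct but meet in $X'$. This does not damage your conclusion: your parenthetical ``(or any nonempty union of its components)'' already allows taking $D'$ to be a single component, and that is all the lemma demands.
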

Note that conversely, if $m\geq 2$ and $(X,D)$ is a totally geodesic good pair, with $X$ arithmetic, then $\Gamma$ is of the simplest type by \cite[Proposition 3.2]{StoTol-21-II}.

\begin{theorem}[{\cite[Theorem 1.5 and Proposition 5.1]{StoTol-21-II}}]\label{thm:StoverToledo}
Assume that $\Gamma < {\rm PU}(m,1)$ is a cocompact torsion-free congruence arithmetic lattice of the simplest type and let $X=\Gamma\backslash \mathbb{B}^m_{\mathbb{C}}$. Let $D\subset X$ be a divisor such that $(X,D)$ is a totally geodesic good pair. Let $d\geq 2$. Then there is a finite cover $p:X'\to X$ which admits a cyclic $d$-fold ramified cover $Y\to X'$ with ramification locus the totally geodesic divisor $D':=p^{-1}(D)$. The cover $Y$ is a smooth projective variety which admits a metric of negative sectional curvature and is not homotopy equivalent to any locally symmetric manifold.
\end{theorem}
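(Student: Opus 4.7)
The plan is to build $Y$ as an explicit algebraic cyclic branched cover, equip it with an adapted K\"ahler metric of negative sectional curvature along the lines of Mostow--Siu and Zheng, and finally distinguish it from locally symmetric manifolds via a Chern-number computation.

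The first step is algebraic. A smooth cyclic $d$-fold cover of a projective manifold branched along a smooth divisor $E$ is realised as the total space of the equation $s^{d}=\sigma$ inside a line bundle $L$ with $L^{d}\cong \mathcal{O}(E)$, for $\sigma\in H^{0}(X',L^{d})$ cutting out $E$. The task is thus to arrange a finite cover $p:X'\to X$ in which each component of $p^{-1}(D)$ becomes $d$-divisible in $\mathrm{Pic}(X')$. Congruence subgroups of the arithmetic lattice $\Gamma$ supply a rich family of finite covers, and a sufficiently deep congruence cover realises $[D']/d$ by an honest line bundle $L$; the branched cover $Y$ is then smooth because the components of $D'$ are smooth and pairwise disjoint by the good-pair hypothesis.

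The second step is geometric. The pull-back of the complex hyperbolic metric of $X'$ to $Y$ is a K\"ahler metric with a cone singularity of angle $2\pi/d$ along $D'$. Smooth it by adding a corrector of the form $i\,\partial\bar\partial\,\psi(|\sigma|_{h}^{2})$ to its K\"ahler form, with $h$ a Hermitian metric on $L$ and $\psi$ a convex function chosen to replace the conical profile by a smooth one. The decisive input is that $D'$ is totally geodesic: in appropriate holomorphic normal coordinates near a point of $D'$, the complex hyperbolic metric decomposes, to second order, as a product of a lower-dimensional complex hyperbolic metric with a transverse disc factor, so that verifying the strict negativity of \emph{all} sectional curvatures (not merely of holomorphic sectional curvatures) of the smoothed metric reduces to a one-variable model computation, which is the content of Zheng's local analysis.

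The third step is topological. If $Y$ were homotopy equivalent to a closed locally symmetric manifold $Z$, then the negativity of curvature on $Y$, combined with the classification of Hermitian symmetric spaces of noncompact type, would force $Z$ to be a complex hyperbolic $m$-manifold. Hirzebruch's proportionality principle would then pin down the ratio $c_{1}^{m}(Y):c_{2}\,c_{1}^{m-2}(Y)$ to the universal value attained on $\mathbb{B}_{\mathbb{C}}^{m}$. Applying the branched-cover formula for $c(TY)$ in terms of $p^{\ast}c(TX')$ and the classes of $L$ and $D'$, integrating over $Y$ and comparing ratios, one finds that the discrepancy is a nonzero polynomial in intersection numbers of $D'$ as soon as $d\geq 2$ and $D\neq \emptyset$, giving a contradiction. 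The main obstacle is the strict negativity of all sectional curvatures in the second step: producing a singular cone metric is routine, but the smoothing requires a delicate choice of $\psi$ together with the transversal product decomposition made possible by total geodesicity of $D'$, and it is here that the hypothesis on $(X,D)$ is genuinely used.
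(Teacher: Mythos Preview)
The paper does not prove this theorem; it is quoted from \cite{StoTol-21-II}, and the surrounding discussion in the paper only summarises the ingredients. Comparing your sketch to that summary, your second step (the curvature analysis) and third step (the Chern-number obstruction to local symmetry) are accurate in outline and agree with what the paper attributes to Zheng and to Stover--Toledo respectively.

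There is, however, a genuine gap in your first step. You write that ``a sufficiently deep congruence cover realises $[D']/d$ by an honest line bundle $L$'', treating this as a routine consequence of the abundance of congruence subgroups. It is not. Producing a congruence cover in which the homology class of the totally geodesic divisor becomes $d$-divisible is precisely the new and difficult contribution of \cite{StoTol-21-II}, and it rests on the results of Bergeron--Millson--Moeglin \cite{BeMiMo-16} on the cohomology of arithmetic ball quotients (ultimately, on the theta correspondence and on hard automorphic input). There is no elementary mechanism by which passing to a deeper congruence level improves divisibility of a given integral class; indeed, for generic cohomology classes no such improvement occurs. What Bergeron--Millson--Moeglin provide is control over the position of the class $[D]$ inside the Hodge/automorphic decomposition of $H^{2}(X';\mathbb{Q})$, which is what makes the divisibility argument go through. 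Your sketch places the main obstacle in the smoothing of the cone metric; the paper's commentary makes clear that the main obstacle is the divisibility of $[D]$, and your proposal does not address it.
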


Let us make a few comments on Theorem~\ref{thm:StoverToledo}. The fact that ramified covers of ball quotients along totally geodesic divisors admit negatively curved K\"ahler metrics was known prior to~\cite{StoTol-21-II} and is due to Zheng~\cite{zheng1}, who generalized earlier work by Mostow--Siu~\cite{MoSi-80}. The key contribution made in~\cite{StoTol-21-II} is to show that one can find many arithmetic ball quotients $X$ containing totally geodesic divisors $D$, forming a good pair and such that the integral homology class of $D$ is divisible by some nontrivial integer, thus allowing to build cyclic ramified covers. The new contribution (the divisibility of the homology class $[D]$) relies on deep results on the cohomology of arithmetic groups due to Bergeron, Millson and Moeglin~\cite{BeMiMo-16}. 

\begin{remark} Arithmeticity of the lattices under consideration appears in two ways in this work. Firstly, through the results on the cohomology of arithmetic groups used in~\cite{StoTol-21-II}, and secondly through properties of the Albanese map of arithmetic ball quotients~\cite{Eys-18, LloPy-22}. In this second appearance, arithmeticity is used in a much more elementary way. 
\end{remark}

\subsection{New subgroups of hyperbolic groups of type $\mathscr{F}_{m-1}$ and not $\mathscr{F}_m$}

We will check that we can apply Theorem \ref{thm:LP-BNSR} and Addendum \ref{add-to-thm:LP-BNSR} to the Stover--Toledo examples. To do so, we first need the following:

\begin{lemma}\label{lem:euler}
 Every compact K\"ahler manifold $Y$ as in Theorem \ref{thm:StoverToledo} has non-zero Euler characteristic.
\end{lemma}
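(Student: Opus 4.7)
The plan is to combine the combinatorial formula for the Euler characteristic of a cyclic ramified cover with the definite sign of $\chi$ for closed complex hyperbolic manifolds, and to observe that a parity mismatch in complex dimension forces the two contributions to reinforce rather than cancel.

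First I would derive the identity
\[\chi(Y) = d\,\chi(X') - (d-1)\,\chi(D').\]
For this, write $\pi : Y \to X'$ and decompose $X' = (X' \setminus D') \sqcup D'$. Since $\pi$ restricts to an étale $d$-fold cover above $X' \setminus D'$ and to a bijection $\pi^{-1}(D') \to D'$ (characteristic of a cyclic cover branched along $D'$), additivity and multiplicativity of the Euler characteristic give $\chi(Y) = d\,\chi(X' \setminus D') + \chi(D')$, which rearranges to the displayed formula.

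Next I would invoke Chern--Gauss--Bonnet applied to the Bergman metric: a closed complex hyperbolic manifold $M$ of complex dimension $k$ has $\chi(M)$ proportional to its Bergman volume with a nonzero constant of sign $(-1)^k$, so $(-1)^k \chi(M) > 0$. The manifold $X'$ is a closed ball quotient of complex dimension $m$, hence $(-1)^m \chi(X') > 0$. Moreover, each component of $D'$ is a totally geodesic complex hypersurface in the ball quotient $X'$, and is therefore itself a closed ball quotient of complex dimension $m-1$; since $D'$ is non-empty with finitely many components all contributing a term of the same sign, $(-1)^{m-1} \chi(D') > 0$.

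Multiplying the formula for $\chi(Y)$ by $(-1)^m$ then yields
\[(-1)^m \chi(Y) \;=\; d\cdot (-1)^m \chi(X') \;+\; (d-1)\cdot (-1)^{m-1} \chi(D') \;>\; 0,\]
as both summands are strictly positive (using $d \ge 2$). In particular $\chi(Y) \ne 0$. I do not anticipate any serious obstacle: the only non-trivial input is the sign of $\chi$ for ball quotients, which is standard, and the decisive structural point is simply the parity mismatch between the complex dimensions of $X'$ and $D'$, which prevents the two terms in the ramified-cover formula from cancelling.
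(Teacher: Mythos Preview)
Your proof is correct and follows essentially the same approach as the paper: both derive the ramified-cover formula $\chi(Y)=d\,\chi(X')-(d-1)\,\chi(D')$ and then use that closed ball quotients of complex dimension $k$ satisfy $(-1)^k\chi>0$, so the parity mismatch between $X'$ and $D'$ forces the two terms to have the same sign. Your derivation of the formula via the decomposition into the \'etale locus and the branch locus, and your appeal to Chern--Gauss--Bonnet for the sign, are slightly more explicit than the paper's presentation, but the argument is the same.
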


\begin{proof} Recall that the Euler chracteristic is multiplicative under finite etale covers. For ramified covers, there is a similar formula, taking into account the ramification. It reads as follows, the notation being as in Theorem~\ref{thm:StoverToledo}:
$$\chi (Y)-\chi (D')=d(\chi (X')-\chi (D)).$$ 
Since $D$ and $D'$ are diffeomorphic, we obtain:
\begin{equation}\label{eq:eulerch}
\chi (Y)=d\chi (X')+(1-d)\chi(D).
\end{equation}
We now use the fact that compact ball quotients of dimension $k$ have nonzero Euler characteristic, of the same sign as $(-1)^k$. Hence, $\chi (X')$ and $\chi (D)$ are both nonzero, of opposite signs. Thus the two terms on the right-hand side of~\eqref{eq:eulerch} are nonzero of the same sign and $\chi (Y)\neq 0$. Alternatively, we could have appealed to Gromov's work~\cite{Gro-91} to justify the nonvanishing of $\chi (Y)$, but the above argument is simpler.\end{proof}

We are now ready to state and prove the main result of this section. Theorem \ref{thm:ST-Intro} is a direct consequence.
\begin{theorem}\label{thm:ST-main-text}
 Let $m\geq 2$, let $\Gamma<{\rm PU}(m,1)$ be a uniform torsion-free congruence arithmetic lattice of the simplest type and let $X=\Gamma \backslash \mathbb{B}^m_{\C}$. Let $d\ge 2$. Then there is a finite cover $X'\to X$ such that there exists a finite ramified cover $Y'_d\to X'$ with the following properties:
 \begin{enumerate}
     \item $Y'_d$ is a smooth projective variety which admits a metric of negative sectional curvature and is not homotopy equivalent to any locally symmetric manifold,
     \item the $(m-1)$-th BNSR-invariant $\Sigma^{m-1}(\pi_1(Y'_d))$ is dense in the character sphere $S(\pi_1(Y'_d))$ and every rational character $\xi\in {\rm Hom}(\pi_1(Y'_d),\Q)-\{0\}$ such that $[\xi]$ is in the dense open set $\Sigma^{m-1}(\pi_1(Y'_d))\cap -\Sigma^{m-1}(\pi_1(Y'_d))$ satisfies that $\ker(\xi)$ is of type $\mathscr{F}_{m-1}$ but not of type ${\rm FP}_m(\Q)$.
 \end{enumerate}
\end{theorem}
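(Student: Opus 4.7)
The plan is to combine Stover and Toledo's construction (Theorem~\ref{thm:StoverToledo}) with Theorem~\ref{thm:LP-BNSR} and Addendum~\ref{add-to-thm:LP-BNSR}. First, I would exploit the fact that both of the following properties are inherited by passing to further congruence covers of $X$: \emph{(a)} the existence of a totally geodesic divisor forming a good pair, granted by Lemma~\ref{lem:totgeod}; and \emph{(b)} the finiteness of the Albanese map, which is known to hold on sufficiently deep congruence covers of arithmetic ball quotients of the simplest type (this is the arithmetic input behind~\cite{Eys-18, LloPy-22}). Taking a common finite congruence cover of the two covers provided by \emph{(a)} and \emph{(b)} yields a single $X' \to X$ on which both properties hold simultaneously.

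Next, I would apply Theorem~\ref{thm:StoverToledo} to $(X', D')$ with ramification index $d$ to produce the cyclic $d$-fold ramified cover $q \colon Y'_d \to X'$ along $D'$. Item~(1) then follows immediately, and negative sectional curvature ensures that $Y'_d$ is aspherical.

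To obtain item~(2), I would verify the hypotheses of Theorem~\ref{thm:LP-BNSR} for $Y'_d$. The composition
$$Y'_d \xrightarrow{\,q\,} X' \xrightarrow{\,a_{X'}\,} A(X')$$
is a holomorphic map from $Y'_d$ to a complex torus; since $q$ is a finite map (being a ramified cover of compact complex manifolds) and $a_{X'}$ is finite by the choice of $X'$, the composition is a finite holomorphic map. Moreover, Lemma~\ref{lem:euler} yields $\chi(Y'_d) \neq 0$. Addendum~\ref{add-to-thm:LP-BNSR} then gives that $\Sigma^{m-1}(\pi_1(Y'_d)) \cap -\Sigma^{m-1}(\pi_1(Y'_d))$ is a dense open subset of $S(\pi_1(Y'_d))$, and its final sentence implies that $\ker(\xi)$ fails to be of type $\FP_m(\mathbb{Q})$ for every nonzero character $\xi$. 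For a rational character $\xi$ whose class lies in this symmetric open set, Proposition~\ref{prop:BNSR}(2), applied after scaling $\xi$ to integer values, yields that $\ker(\xi)$ is of type $\mathscr{F}_{m-1}$, completing the proof.

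The main step to coordinate carefully is the choice of $X'$: one must find a single congruence cover carrying a totally geodesic good pair \emph{and} having finite Albanese. Both conditions persist under further congruence covers, so a fibre product of the two covers works. The nontrivial point hidden behind this coordination is the arithmeticity of $\Gamma$, which is precisely what simultaneously powers the Stover--Toledo divisibility statement for $[D]$ and the required finiteness of the Albanese map on deep congruence covers.
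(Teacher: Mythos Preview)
Your proposal is correct and follows essentially the same approach as the paper: combine Stover--Toledo's construction with Eyssidieux's result on finiteness of the Albanese map, then feed the resulting aspherical manifold with nonzero Euler characteristic into Addendum~\ref{add-to-thm:LP-BNSR}. The paper orders the steps slightly differently (first obtain the good pair, then apply Theorem~\ref{thm:StoverToledo}, then pass to a further cover $X_3$ with finite Albanese map, and finally pull back the ramified cover along $X_3\to X_2$), whereas you arrange both properties on a common congruence cover up front; the content is the same.

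One small point to tighten: Theorem~\ref{thm:StoverToledo}, as stated, does not produce a ramified cover of your $X'$ directly but of a \emph{further} finite cover $X''\to X'$. This is harmless for your argument, since the composition $Y'_d\to X''\to X'\xrightarrow{a_{X'}} A(X')$ is still a finite holomorphic map to a torus, but you should name that extra cover rather than writing $q\colon Y'_d\to X'$.
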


\begin{proof}
 By Lemma \ref{lem:totgeod}, there exists a finite congruence cover $X_1\to X$ and a divisor $D_1\subset X_1$ such that $(X_1,D_1)$ is a totally geodesic good pair. Theorem \ref{thm:StoverToledo} implies that there is a finite congruence cover $p_2: X_2\to X_1$ which admits a $d$-fold cyclic ramified cover $q: Y_d\to X_2$ with ramification locus the totally geodesic divisor $D_2=p_2^{-1}(D_1)$. The manifold $Y_d$ admits a K\"ahler metric of negative sectional curvature and does not have the homotopy type of a locally symmetric space. By a theorem of Eyssidieux \cite{Eys-18} (see also \cite[Theorem 24]{LloPy-22}) there is a further finite cover $p_3:X_3\to X_2$ such that the Albanese map $a_{X_3}\colon X_3\to A(X_3)$ is an immersion and thus defines a finite map to a complex torus. The pair $(X_3,D_3=p_3^{-1}(D_2))$ is again a totally geodesic good pair. Moreover, $p_3$ induces a regular cover $Y'_d\to Y_d$ such that there is a $d$-fold ramified cover $Y'_d\to X_3$ with ramification locus $D_3$. In particular, $Y'_d$ is a smooth projective variety admitting a metric of negative sectional curvature. Since finite (possibly ramified) covers are finite maps and compositions of finite maps are finite, the induced holomorphic map $Y'_d\to A(X_3)$ is finite. By Lemma \ref{lem:euler}, $Y'_d$ has non-trivial Euler characteristic. Addendum \ref{add-to-thm:LP-BNSR} thus completes the proof. 
\end{proof}

\subsection{New K\"ahler groups of type $\mathscr{F}_{2n-1}$ and not $\mathscr{F}_{2n}$}\label{sec:ST-CS}

We sketch here without details a possible construction of groups as in the title of this section. Let $X_{{\rm CS}}$ be the {\it Cartwright-Steger surface}. It is a quotient of the unit ball of $\C^2$ by a uniform congruence arithmetic lattice $\Gamma \leq {\rm PU}(2,1)$ of the simplest type \cite[p. 89-90]{Sto-14} (see also \cite[Remark 3.7]{StoTol-21-II}) with $b_1(X)=2$. Consequently its Albanese map $f:X_{{\rm CS}}\to E$ is onto an elliptic curve $E$. It was shown in \cite{CarKozYeu-17} that this map has isolated singularities. Moreover, $f$ has connected fibres. Let $n\ge 2$ be an integer. Consider the map
$$F : X_{{\rm CS}} \times \cdots \times X_{{\rm CS}}\to E$$
from the direct product of $n$ copies of $X_{{\rm CS}}$ to $E$ obtained by summing the map $f$ appplied to each factor. It was proved in~\cite{NicPy-21} that the fundamental group of the generic fibre of the map $F$ is a K\"ahler group of type $\mathscr{F}_{2n-1}$ but not of type ${\rm FP}_{2n}(\mathbb{Q})$. Applying Stover and Toledo's work, we can consider a congruence cover $p : X' \to X_{{\rm CS}}$ admitting a ramified cover $p' : Y\to X'$ along a totally geodesic divisor. If the ramification locus in $X'$ is chosen in general position, it will project {\it onto} $E$ via the map $f\circ p$. This implies that the map
$$h:=f\circ p\circ p' : Y\to E$$
has isolated critical points. One can thus repeat\footnote{One must possibly replace $E$ by a finite cover to ensure that $h$ has connected fibers.} the construction from~\cite{NicPy-21} by considering the map
$$h+\cdots +h : Y \times \cdots \times Y\to E$$
from the direct product of $n\ge 2$ copies of $Y$. This yields new K\"ahler groups with exotic finiteness properties, sitting inside a direct product of hyperbolic groups.


\section{Homomorphisms to $\Z^{2}$}\label{sec:zeetwo}

In this section, we prove Theorem~\ref{thm:zeetwo} (Section~\ref{sec:zeetwo:first}) and then discuss whether the groups with exotic finiteness properties constructed in this article and in~\cite{LloPy-22} are K\"ahler or not (Section~\ref{sec:zeetwo:second}). 
 
\subsection{Hodge theory yields circles in the BNSR-invariants}\label{sec:zeetwo:first}

Let $X$ be a closed K\"ahler manifold of complex dimension $n$. Let $a\in H^{1}(X,\C)$ be a cohomology class whose real and imaginary part are independent in $H^{1}(X,\R)$. Equivalently, we require that $a$ is not a complex multiple of a real class. Associated to $a$, there is a circle 
$$S^{1}(a)\subset S(\pi_{1}(X))\cong H^{1}(X,\R)-\{0\}/\R_{+}^{\ast}.$$
The circle $S^1(a)$ is made of the projections in $S(\pi_{1}(X))$ of the (de Rham) cohomology classes
$${\rm Re}(e^{i\theta}a) \;\;\; (\theta \in \R).$$
We now assume that $a$ is the cohomology class of a holomorphic $1$-form $\alpha$ with finitely many zeros. In particular, if $X$ is aspherical, Theorem 6 from~\cite{LloPy-22} can be applied to the cohomology classes $[{\rm Re}(e^{i\theta}\alpha)]$ for each real number $\theta$, since $e^{i\theta}\alpha$ is a holomorphic $1$-form with finitely many zeros. That theorem gives that $[{\rm Re}(e^{i\theta}\alpha)]\in \Sigma^{n-1}(\pi_{1}(X))\cap -\Sigma^{n-1}(\pi_{1}(X))$. In other words, the circle $S^{1}([\alpha])$ is contained in $\Sigma^{n-1}(\pi_{1}(X))\cap -\Sigma^{n-1}(\pi_{1}(X))$. We have proved:

\begin{proposition}\label{prop:cibnsr} If $X$ is aspherical and if $\alpha$ is a holomorphic $1$-form with finitely many zeros on $X$ we have:
$$S^{1}([\alpha])\subset \Sigma^{n-1}(\pi_{1}(X))\cap -\Sigma^{n-1}(\pi_{1}(X)).$$
\end{proposition}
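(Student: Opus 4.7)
The proof is essentially a parametrised reduction to Theorem 6 of [LloPy-22], and the informal reasoning immediately preceding the statement already contains every ingredient. Let me sketch it as a plan.

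The plan is to check that, for each real $\theta$, the $1$-form $e^{i\theta}\alpha$ satisfies the hypotheses of [LloPy-22, Theorem 6], and then observe that this immediately covers every point of the circle $S^{1}([\alpha])$. First I would note that multiplication by the nonzero complex scalar $e^{i\theta}$ sends holomorphic forms to holomorphic forms, and preserves the zero set pointwise; so $e^{i\theta}\alpha$ is again a holomorphic $1$-form on $X$ with finite zero set (in fact, the same zero set as $\alpha$). Since $X$ is aspherical, [LloPy-22, Theorem 6] applies to $e^{i\theta}\alpha$ and yields that the de Rham class of its real part satisfies
\[
\bigl[\mathrm{Re}(e^{i\theta}\alpha)\bigr]\in \Sigma^{n-1}(\pi_{1}(X))\cap -\Sigma^{n-1}(\pi_{1}(X)).
\]

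Next I would unwind the definition of the circle $S^{1}([\alpha])$ recalled above the statement: it is precisely the image in $S(\pi_{1}(X))\cong(H^{1}(X,\R)-\{0\})/\R_{+}^{\ast}$ of the family $\{[\mathrm{Re}(e^{i\theta}\alpha)]:\theta\in\R\}$. Ranging over $\theta$, the previous paragraph places every representative of every point of $S^{1}([\alpha])$ inside the $\R_{+}^{\ast}$-invariant set $\Sigma^{n-1}(\pi_{1}(X))\cap -\Sigma^{n-1}(\pi_{1}(X))\subset S(\pi_{1}(X))$, which proves the inclusion.

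There is no real obstacle here beyond quoting [LloPy-22, Theorem 6]; the only thing to verify is the trivial but crucial point that the property ``holomorphic $1$-form with finitely many zeros'' is invariant under multiplication by $e^{i\theta}$, so that the full $S^{1}$-orbit $\{e^{i\theta}\alpha\}_{\theta\in\R}$ rather than just $\alpha$ itself can be fed into that theorem. Conceptually, this is where the Hodge-theoretic nature of the statement enters: it is the splitting $H^{1}(X,\C)=H^{1,0}\oplus H^{0,1}$ together with the $S^{1}$-action by multiplication on the holomorphic summand that produces the full circle in the BNSR-invariant, rather than just a pair of antipodal points.
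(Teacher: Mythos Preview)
Your proposal is correct and matches the paper's own argument essentially verbatim: the paper proves the proposition in the paragraph immediately preceding its statement, by observing that $e^{i\theta}\alpha$ is again a holomorphic $1$-form with finitely many zeros and then applying \cite[Theorem 6]{LloPy-22} for each $\theta$. Your additional remarks on the Hodge-theoretic interpretation are accurate but not needed for the proof.
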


We are now ready to prove Theorem~\ref{thm:zeetwo}, combining Proposition~\ref{prop:cibnsr} with the results recalled in Section~\ref{sec:FinProps}. 

\begin{proof}[Proof of Theorem~\ref{thm:zeetwo}] Let $X$ be as in the statement of Theorem~\ref{thm:zeetwo}. The set $U\subset H^{0}(X,\Omega^{1}_{X})$ of holomorphic $1$-forms with finitely many zeros is dense (see Propositions 14 and 18 in~\cite{LloPy-22}). Let $\alpha \in U$. By Proposition~\ref{prop:cibnsr}, the circle $S^{1}([\alpha])$ is contained in $\Sigma^{n-1}(\pi_{1}(X))\cap -\Sigma^{n-1}(\pi_{1}(X))$. Let $a_{1}$ and $a_2$ be rational elements in $H^{1}(X,\R)$ which are close enough to $[{\rm Re}(\alpha)]$ and $[{\rm Im}(\alpha)]$ respectively. The set $\Sigma^{n-1}(\pi_{1}(X))\cap -\Sigma^{n-1}(\pi_{1}(X))$ being open, we have
\begin{equation}\label{eq:fdetr}
[\cos (t)a_1 +\sin (t) a_2] \in \Sigma^{n-1}(\pi_{1}(X))\cap -\Sigma^{n-1}(\pi_{1}(X))
\end{equation} for all real numbers $t$, if $a_{1}$ and $a_{2}$ are close enough to $[{\rm Re}(\alpha)]$ and $[{\rm Im}(\alpha)]$. Let $N$ be a large enough integer so that $Na_{1}$ and $Na_{2}$ define morphisms from $\pi_{1}(X)$ to $\mathbb{Z}$. The image $\Lambda$ of the morphism $(Na_1,Na_2) : \pi_{1}(X)\to \Z^2$ is isomorphic to $\Z^2$. Equation~\eqref{eq:fdetr} implies that for any nonzero morphism $u : \Lambda \to \R$, we have $[u\circ (Na_1,Na_2)]\in \Sigma^{n-1}(\pi_{1}(X))\cap -\Sigma^{n-1}(\pi_{1}(X))$. Theorem~\ref{thm:finiteness-high-rank} then implies that the kernel of the morphism
$$(Na_1,Na_2) : \pi_{1}(X)\to \Lambda \cong \Z^{2}$$
is of type $\mathscr{F}_{n-1}$. The fact that it is not of type ${\rm FP}_n(\mathbb{Q})$ follows from the last point of Addendum~\ref{add-to-thm:LP-BNSR}; indeed, the latter property holds for kernels of arbitrary characters. Finally the openness of the set of homomorphisms $\pi_{1}(X)\to \Z^{2}$ whose kernel is of type $\mathscr{F}_{n-1}$ follows from Proposition~\ref{prop:openess}. This concludes the proof of Theorem~\ref{thm:zeetwo}. 
\end{proof}

Theorem \ref{thm:zeetwo} has the following consequence, which also proves Corollary \ref{cor:zeetwo}.

\begin{corollary}\label{cor:zeetwo-main-text}
    Assume that $Z$ is a $n$-dimensional complex manifold which is either a complex ball quotient by a uniform arithmetic lattice with non-trivial first Betti number, or one of the Stover--Toledo manifolds $Y_d'$ constructed in Theorem \ref{thm:ST-main-text}. Then there is a finite cover $Z'\to Z$ and a surjective homomorphism $\phi:\pi_1(Z')\to \mathbb{Z}^2$ with kernel of type $\mathscr{F}_{n-1}$, but not ${\rm FP}_n(\mathbb{Q})$.
\end{corollary}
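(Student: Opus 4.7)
The plan is to reduce both cases to a direct application of Theorem~\ref{thm:zeetwo}. The hard analytic content (producing a circle of characters inside $\Sigma^{n-1}(\pi_{1}(X)) \cap -\Sigma^{n-1}(\pi_{1}(X))$ from a holomorphic $1$-form with isolated zeros) was already established in Section~\ref{sec:zeetwo:first}, so the task reduces to verifying that, after passing to a suitable finite cover, $Z'$ is a closed aspherical K\"ahler manifold with positive first Betti number, nonzero Euler characteristic, and finite Albanese map.

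For the ball quotient case, I would start with $Z = \Gamma \backslash \mathbb{B}^n_{\C}$ satisfying $b_{1}(Z) > 0$. Asphericity and the K\"ahler condition are automatic, while the Euler characteristic is proportional (up to sign) to the Bergman volume and hence nonzero. To obtain a finite Albanese map, I would invoke Eyssidieux's theorem (exactly as in the proof of Theorem~\ref{thm:ST-main-text}) to pass to a finite congruence cover $Z' \to Z$ on which the Albanese map $a_{Z'} : Z' \to A(Z')$ is an immersion; compactness of $Z'$ then upgrades this immersion to a finite holomorphic map, while positivity of $b_{1}$ and nonvanishing of $\chi$ obviously pass to $Z'$.

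For the Stover-Toledo case, I would simply take $Z' = Z = Y_{d}'$. Item (1) of Theorem~\ref{thm:ST-main-text} furnishes a K\"ahler metric of negative sectional curvature on $Y_{d}'$, hence asphericity by Cartan-Hadamard, and Lemma~\ref{lem:euler} gives $\chi(Y_{d}') \ne 0$. By the construction in the proof of Theorem~\ref{thm:ST-main-text}, $Y_{d}'$ is a ramified cover of a ball quotient $X_{3}$ whose Albanese map is already finite; consequently the composition $Y_{d}' \to X_{3} \to A(X_{3})$ is a finite holomorphic map to a complex torus. By the universal property of the Albanese, this composition factors through $a_{Y_{d}'}$, which is therefore finite as well (and in particular $b_{1}(Y_{d}') > 0$). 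In both cases Theorem~\ref{thm:zeetwo} then yields a surjection $\pi_{1}(Z') \to \Z^{2}$ with kernel of type $\mathscr{F}_{n-1}$ but not ${\rm FP}_{n}(\Q)$.

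The main points to be careful about are the upgrade from ``immersive'' to ``finite'' in case 1 (which uses only compactness of $Z'$) and the factorization argument in case 2, but neither constitutes a genuine obstacle. Finally, the hyperbolicity of $\pi_{1}(Z')$ in both cases---which is what makes this corollary imply Corollary~\ref{cor:zeetwo} from the introduction---is immediate: ball quotients have Gromov-hyperbolic fundamental groups as cocompact lattices in ${\rm PU}(n,1)$, and the Stover-Toledo manifolds are negatively curved by construction.
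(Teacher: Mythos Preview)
Your proposal is correct and follows essentially the same approach as the paper's proof, which is a terse two-sentence reduction to Theorem~\ref{thm:zeetwo} via Eyssidieux's result. Your version is more explicit---in particular, you correctly note that for the Stover--Toledo case no further cover is needed since the proof of Theorem~\ref{thm:ST-main-text} already produces a finite map $Y_d' \to A(X_3)$, whence the Albanese map of $Y_d'$ itself is finite by the universal property---but the underlying argument is identical.
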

\begin{proof}
    We argue as in the proof of Theorem \ref{thm:ST-main-text}, using Eyssidieux's work \cite{Eys-18}, that in both cases there is a finite cover $Z'\to Z$ with finite Albanese map and non-trivial Euler characteristic. The result is then an immediate consequence of Theorem \ref{thm:zeetwo}.
\end{proof}

Since the virtual first Betti number of arithmetic lattices $\Gamma\le {\rm PU}(n,1)$ with $b_1(\Gamma)>0$ is infinite \cite{Ago-06, Ven-08}, it is natural to ask if Corollary \ref{cor:zeetwo-main-text} generalises to abelian quotients of arbitrary rank.

\begin{question}
    Let $n\geq 2$ and $k\geq 3$ be integers. Let $Z$ be as in Corollary \ref{cor:zeetwo-main-text} with ${\rm dim}_{\mathbb{C}}(Z)=n$. Is there a finite cover $Z'\to Z$ whose fundamental group admits a surjective homomorphism $\phi:\pi_1(Z')\to \mathbb{Z}^k$ with kernel of type $\mathscr{F}_{n-1}$ and not of type ${\rm FP}_n(\mathbb{Q})$?
\end{question}

A positive answer to this question would also provide a positive answer to the following more general question.

\begin{question}
 Let $n\geq 4$ and $k\geq 3$ be integers. Is there a hyperbolic group $G$ together with a surjective homomorphism $\phi: G\to \mathbb{Z}^k$ such that $\ker(\phi)$ is of type $\mathscr{F}_{n-1}$ and not of type ${\rm FP}_n(\mathbb{Q})$?
\end{question}
For $n=2$, this last question can be answered using a classical construction due to Rips, see~\cite[p. 227]{BrHa} and very recently Kropholler and Llosa Isenrich gave an answer for $n=3$ \cite{Llo-Kro}. The case $n>3$ remains open.

\subsection{K\"ahler and non-K\"ahler subgroups}\label{sec:zeetwo:second}

We mentioned in the introduction that complex Morse theory methods allow to construct examples of groups with exotic finiteness properties as subgroups of K\"ahler groups. These groups are sometimes K\"ahler but not always. In the next theorem, we justify that some of these groups are not K\"ahler. We focus on subgroups of lattices in ${\rm PU}(n,1)$ and explain in Remark~\ref{rem:kcnpst} how one could possibly obtain slightly more general results. 

\begin{theorem}\label{thm:nonk} Let $\Gamma < {\rm PU}(n,1)$ be a torsion-free cocompact lattice and let $\psi : \Gamma \to \Z^{k}$ be a surjective homomorphism. 
\begin{itemize}
\item If $k=1$, the kernel of $\psi$ is not a K\"ahler group. 
\item If $k=2$, and if the kernel of $\psi$ is a K\"ahler group, there exists a holomorphic map $\pi$ with connected fibres from the quotient $\Gamma \backslash\mathbb{B}_{\C}^{n}$ onto an elliptic curve $E$, and an isomorphism $\varphi : \pi_{1}(E)\to \Z^2$ such that $\psi=\varphi\circ \pi_{\ast}$.  
\end{itemize} 
\end{theorem}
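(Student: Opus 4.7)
Suppose $K := \ker(\psi)$ is a K\"ahler group, realised as $K = \pi_{1}(Y)$ for some compact K\"ahler manifold $Y$. My strategy is to combine Eells--Sampson's existence theorem for harmonic maps with Siu's rigidity in order to arrange that the inclusion $K \hookrightarrow \Gamma$ is induced by a holomorphic map $f : Y \to X = \Gamma\backslash\mathbb{B}_{\C}^{n}$. The induced pullback $f^{\ast} : H^{1}(X,\C) \to H^{1}(Y,\C)$ is then a morphism of pure Hodge structures of weight one, whose kernel is therefore a sub-Hodge structure; this forces a parity constraint on $k = \dim_{\R} L_{\R}$, where $L_{\R} \subset H^{1}(X,\R)$ denotes the $k$-dimensional real subspace of characters of $\Gamma$ that factor through $\psi$.

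In more detail, Eells--Sampson produces a harmonic map $f : Y \to X$ inducing the inclusion on fundamental groups. Since $X$ has strongly negative curvature in Siu's sense, Siu's rigidity asserts that, provided $df$ has rank $\ge 2$ somewhere, $f$ is either holomorphic or antiholomorphic; after possibly replacing the complex structure on $Y$ by its conjugate, we may assume $f$ is holomorphic. Pullback by a holomorphic map preserves $(1,0)$-forms, so $f^{\ast}$ is a morphism of pure weight-one Hodge structures. The five-term exact sequence of the Lyndon--Hochschild--Serre spectral sequence for $1 \to K \to \Gamma \to \Z^{k} \to 1$ identifies $\ker(f^{\ast})$ with the image of the inflation $H^{1}(\Z^{k},\R) \hookrightarrow H^{1}(\Gamma,\R)$, which is precisely $L_{\R}$. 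Hence $L_{\R}$ is a sub-Hodge structure of $H^{1}(X,\R)$, invariant under the complex structure, so $k$ is even.

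For $k=1$ this is already a contradiction, so $K$ cannot be K\"ahler. For $k=2$, $L_{\R} \otimes \C$ meets $H^{1,0}(X)$ in a complex line $L^{1,0}$; the one-dimensional complex quotient of the Albanese torus $A(X)$ dual to $L^{1,0}$ is an elliptic curve $E_{0}$, and composing the Albanese with this quotient yields a holomorphic map $X \to E_{0}$ realising $\psi$ on the abelianisation. Passing to the Stein factorisation produces a map $\pi : X \to E$ with connected fibres onto a smooth projective curve $E$ finite over $E_{0}$, and the K\"ahlerness of $K$ should force $E$ itself to be an elliptic curve. The induced $\varphi : \pi_{1}(E) \to \Z^{2}$ is then automatically an isomorphism, since any surjection $\Z^{2} \twoheadrightarrow \Z^{2}$ is one, yielding $\psi = \varphi \circ \pi_{\ast}$.

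The main obstacles I expect to address are twofold. First, one must handle the degenerate case of Siu's rigidity in which $df$ has rank $\le 1$ everywhere: then $f$ factors through a one-dimensional compact subvariety of $X$, which should be incompatible with $\Gamma/K \cong \Z^{k}$ given that $\Gamma$ has cohomological dimension $2n \ge 4$ and is hyperbolic. Second, in the $k=2$ case, one must verify that the Stein factorisation base $E$ is actually elliptic rather than of higher genus; this should follow by observing that a higher-genus base would force $K$ to be a K\"ahler group sitting as an infinite-index subgroup of a non-abelian surface group, a configuration one can exclude via Delzant-type structure theorems for K\"ahler groups.
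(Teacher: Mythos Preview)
Your overall strategy matches the paper's: realise the inclusion $K\hookrightarrow\Gamma$ by a harmonic map $f:Y\to X$ via Eells--Sampson, invoke Siu/Carlson--Toledo rigidity, and in the high-rank case use that $\ker f^{\ast}$ is a sub-Hodge structure of $H^{1}(X,\C)$ to force $k$ even, then pass to the Albanese and Stein-factorise for $k=2$. The Hodge-theoretic core is correct and essentially identical to the paper's.

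However, your treatment of the two ``obstacles'' contains genuine errors. In the low-rank case you write that $f$ factors through a one-dimensional compact subvariety of $X$. This is not what Carlson--Toledo gives: when the complex rank of $dh_0^{1,0}$ is $1$, one obtains a factorisation $f=m_0\circ\pi$ with $\pi:Y\to\Sigma$ a surjective holomorphic map with connected fibres onto a closed Riemann surface and $m_0:\Sigma\to X$ merely harmonic; the image $f(Y)$ need not be a complex subvariety. The paper then argues that $\pi_{\ast}:\pi_1(Y)\to\pi_1(\Sigma)$ is an isomorphism (injective since $\varrho$ is, surjective by connected fibres), so $K$ is a surface group; this contradicts the fact that $K$ is not of type ${\rm FP}_n(\Q)$ (or alternatively one cites Bregman--Zhang to rule out an extension $1\to\pi_1(\Sigma)\to\Gamma\to\Z^k\to 1$). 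Your cohomological-dimension bound ${\rm cd}(\Gamma)\le 2+k$ disposes of all cases except $(n,k)=(2,2)$; to cover that case via hyperbolicity you would need something like Mosher's theorem that a hyperbolic extension of $Q$ by a non-elementary hyperbolic group forces $Q$ hyperbolic, which you do not invoke.

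Your Stein-factorisation argument has the direction of the inclusion reversed. If the Stein base $\Sigma$ has genus $\ge 2$, then $K$ does \emph{not} sit inside $\pi_1(\Sigma)$; rather, since $X\to\Sigma$ has connected fibres, $\Gamma\to\pi_1(\Sigma)$ is surjective and hence $K$ \emph{surjects onto} $\ker\bigl(\pi_1(\Sigma)\to\Z^2\bigr)$. This last group is infinitely generated, contradicting the finite generation of the K\"ahler group $K$. No Delzant-type structure theorem is needed. You should also note, as the paper does, that the genus-one case $\Sigma\neq E_0$ is excluded because a nontrivial covering $\Sigma\to E_0$ would prevent $\psi$ from being surjective.
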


When $n\le 2$, the kernel of a morphism $\psi$ as above cannot be finitely presented. This follows from Corollary 15 in~\cite{LIMP-21}; see also~\cite{fisher}. Hence Theorem~\ref{thm:nonk} is relevant only for $n\ge 3$. As a consequence of it, we obtain that in the context of Theorem~\ref{thm:zeetwo} applied to ball quotients, most of the kernels under consideration are not K\"ahler. Indeed, let $X$ be a closed ball quotient with finite Albanese map. Let $\psi : \pi_{1}(X)\to \Z^2$ be a surjective morphism. The space $H^{1}(X,\R)$ is endowed with its natural complex structure via the identification
$$H^{1,0}(X)\to H^{1}(X,\R)$$
mapping a holomorphic $1$-form to the class of its real part. If the kernel of $\psi$ is K\"ahler, then the plane $P\subset H^{1}(X,\R)$ defined by $\psi$ is a complex line, thanks to Theorem~\ref{thm:nonk}. This last condition can be broken by a suitable rational perturbation of $\psi$, yielding many morphisms onto $\Z^2$ with non-K\"ahler kernels. Note that we assumed that the Albanese map of $X$ was finite, hence $b_{1}(X)>2$ and $H^{1}(X,\Q)$ is large enough to perturb $\psi$.

\begin{proof}[Proof of Theorem~\ref{thm:nonk}] As observed above, we can assume that $n\ge 2$. We fix $\Gamma$ and $\psi$ as in the statement of the theorem. We write $X=\Gamma \backslash \mathbb{B}_{\mathbb{C}}^{n}$. We assume that there exists a closed K\"ahler manifold $Y$ whose fundamental group is isomorphic to the kernel of $\psi$. The natural morphism $\varrho : \pi_{1}(Y) \to \pi_{1}(X)$ is induced by a smooth map $h : Y\to X$. By work of Eells and Sampson there exists a harmonic map $h_0 : Y \to X$ homotopic to $h$, see~\cite{EelSam-64} and also~\cite[p. 68]{ABCKT-95}. This map is pluriharmonic and the $(1,0)$ part of its differential  
$$dh_{0}^{1,0} : TY \to TX\otimes \C$$
is holomorphic for a suitable holomorphic structure on the bundle $TX\otimes \C$, see~\cite[Ch. 6]{ABCKT-95} or~\cite[\S 9.2.2]{Py-book}. We distinguish two cases.

If the complex rank of $dh_{0}^{1,0}$ is equal to $1$, then the harmonic map $h_0$ factors through a Riemann surface: there exists a surjective holomorphic map with connected fibres onto a Riemann surface, denoted by $\pi : Y \to \Sigma$, and a harmonic map $m_{0} : \Sigma \to X$ such that 
$$h_0=m_{0}\circ \pi.$$
This result is due to Carlson and Toledo~\cite[\S 7]{CarTol-89}. The kernel of the induced morphism $\pi_{\ast}$ is contained in the kernel of $\varrho$. Since $\varrho$ is injective, so is $\pi_{\ast}$. Since $\pi$ has connected fibres, $\pi_{\ast}$ is also surjective, and we obtain that
$$\pi_{\ast} : \pi_{1}(Y)\to \pi_{1}(\Sigma)$$
is an isomorphism. However, $\pi_1(Y)$ is not of type ${\rm FP}_n(\mathbb{Q})$ by Addendum \ref{add-to-thm:LP-BNSR}, while surface groups are of type $\mathscr{F}_r$ for all $r$, yielding a contradiction. Alternatively, one can also observe that $\Gamma$ fits into an extension
$$1 \to \pi_{1}(\Sigma)\to \Gamma \to \Z^k \to 1$$
to obtain a contradiction by a theorem due to Bregman and Zhang \cite{BrZh} (see also the work of Nicol\'as \cite{Nic-22} for a different proof of this theorem). This first half of the proof works for arbitrary $k$.

We now assume that the complex rank of $dh_{0}^{1,0}$ is greater than $1$. In that case another result due to Carlson and Toledo~\cite[Cor. 3.7]{CarTol-89} ensures that the map $h_0$ is holomorphic (after possibly replacing the complex structure on $X$ by its complex conjugate structure). To complete the proof we use arguments similar to those in \cite[\S 3]{Llo-18-I} and \cite[p.19--20]{Llo-17}. The kernel of the pull-back map 
$$h_{0}^{\ast} : H^{1}(X,\C)\to H^{1}(Y,\C)$$
is a sub-Hodge structure of $H^{1}(X,\C)$. In particular it has even dimension. By the original definition of $\pi_{1}(Y)$, which is isomorphic to the kernel of $\psi$, we obtain a contradiction if $k=1$. This concludes the proof that ${\rm ker}(\psi)$ is not K\"ahler for $k=1$. When $k=2$, we consider the Albanese tori of $X$ and $Y$, denoted by $A(X)$ and $A(Y)$. We let $a_X : X \to A(X)$ and $a_Y : Y \to A(Y)$ be the corresponding Albanese maps. By the universal property of Albanese maps, there exists a holomorphic map $\theta : A(Y)\to A(X)$ making the following diagram commutative:
\[\xymatrix{Y \ar[r]^{h_{0}} \ar[d]^{a_{Y}} & X \ar[d]^{a_{X}} \\
A(Y) \ar[r]^{\theta} & A(X). \\
}\]
We can assume that $\theta$ is a group morphism, after possibly composing $a_X$ by a translation. The codimension of the image of $A(Y)$ in $A(X)$ is equal to the dimension of the kernel of the restriction map $H^{1,0}(X)\to H^{1,0}(Y)$, which is equal to $\frac{k}{2}=1$. Hence $\theta(A(Y))$ is a codimension $1$ subtorus of $A(X)$. We let $E$ be the quotient $A(X)/\theta (A(Y))$ and $\pi$ be the composition of the map $a_X$ with the quotient map $A(X)\to E$. Let $a\in H^{1}(X,\Z)$. We write $a=a_X^{\ast}b$ with $b\in H^{1}(A(X),\Z)$. Then $a$ vanishes on $\pi_{1}(Y)$ if and only if $b$ vanishes on the image of $\pi_{1}(A(Y))$ in $\pi_{1}(A(X))$. This is also equivalent to the fact that $b$ comes from a class in $H^{1}(E,\Z)$. This implies that $\psi=\varphi \circ \pi_{\ast}$ for some morphism $\varphi : \pi_{1}(E)\to \Z^2$ which is necessarily an isomorphism. We finally have to justify the fact that $\pi$ has connected fibers. If this is not the case, we can consider the Stein factorization $X\to \Sigma \to E$ of this map. If $\Sigma\neq E$ and $\Sigma$ is hyperbolic, we obtain a contradiction since the kernel of the morphism $\pi_{1}(\Sigma)\to \pi_{1}(E)$ is not finitely generated. If $\Sigma$ has genus one, it cannot be a nontrivial covering space of $E$, otherwise $\pi_{\ast}$ would not be surjective. Hence $\Sigma =E$ and $\pi$ has connected fibers. This concludes the proof of Theorem~\ref{thm:nonk}.\end{proof}

\begin{remark}\label{rem:kcnpst} One could try to prove a statement analogous to Theorem~\ref{thm:nonk}, when the lattice $\Gamma < {\rm PU}(n,1)$ is replaced by the fundamental group of one of the negatively curved manifolds built by Stover and Toledo and described in Section~\ref{subsec:stto}. This can most probably be done along the following lines. The natural K\"ahler form on these manifolds (see~\cite{MoSi-80, zheng1}) is built by pulling back the K\"ahler form of the ball quotient and adding a suitable $(1,1)$ form supported near the branch locus. This metric is known to have nonpositive Hermitian sectional curvature (see~\cite[Ch. 6]{ABCKT-95} or~\cite[\S 9.2.2]{Py-book} for this notion). This property is enough to build a harmonic map and to prove that it is pluriharmonic, as in the proof of Theorem~\ref{thm:nonk}. Then, one should adapt the results from~\cite{CarTol-89}, about harmonic maps to complex hyperbolic manifolds, to the case where the target is one of the manifolds built by Stover--Toledo.   
\end{remark}


\section{Fibre products over Riemann surfaces}
\label{sec:Livne}

In this section we describe a new method that uses complex Morse theory to produce examples of non-normal subgroups with exotic finiteness properties in a direct product of non-positively curved groups. Most previous constructions of subgroups with exotic finiteness properties in non-positively curved groups lead to normal subgroups. Our method might possibly be applied in other situations and could lead to further interesting examples.

\subsection{Fibre products with isolated singularities and Livn\'e's surfaces}
\label{sec:Liv-Ex}

Assume that $X_1$ and $X_2$ are closed complex manifolds of dimensions $n_i=\dim_{\C}(X_i)$ with $n_1+n_2\geq 3$ and that $p_i: X_i\to \Sigma$ ($i=1, 2$), is a surjective holomorphic map with isolated critical points onto a closed hyperbolic Riemann surface $\Sigma$. Assume further that $p_1$ and $p_2$ induce surjections on fundamental groups, that the fibres of $p_1$ are connected, and that the sets of critical values of $p_1$ and $p_2$ are both non-empty and have empty intersection. Let
\[
    Z:=\left\{(x_1,x_2)\in X_1\times X_2 \mid p_1(x_1)=p_2(x_2)\right\}\subset X_1\times X_2
\]
be the fibre product and observe that $Z=(p_1,p_2)^{-1}(\Delta_{\Sigma})$ for $\Delta_{\Sigma}\hookrightarrow \Sigma \times \Sigma$ the diagonal. Our hypothesis on critical values implies that $Z$ is a smooth connected complex submanifold of $X_1\times X_2$.

Denote by $q: Y\to \Sigma\times \Sigma$ the covering space corresponding to the subgroup $\pi_{1}(\Delta_{\Sigma})<\pi_{1}(\Sigma)\times \pi_{1}(\Sigma)$ and let $\widehat{q}:W\to X_1\times X_2$ be the covering space induced by $(p_1,p_2)_{\ast}^{-1}(\pi_1(\Delta_{\Sigma}))$. We fix a hyperbolic metric on $\Sigma$ and equip $Y$ with the metric of non-positive curvature obtained by pulling back the corresponding product metric on $\Sigma \times \Sigma$. Let $\widehat{\Delta}_{\Sigma}$ be the unique {\it compact} connected component of the preimage of $\Delta_{\Sigma}$ in $Y$. Let
$$
\begin{array}{cccc}
f: & Y & \to & \mathbb{R}_{\geq 0} \\
& y & \mapsto & \left({\rm dist}\left(y,\widehat{\Delta}_{\Sigma}\right)\right)^2 \\
\end{array}
$$
be the square of the distance function to $\widehat{\Delta}_{\Sigma}$ in $Y$. Since $\Sigma$ is non-positively curved, for every $y\in Y$ there is a unique shortest geodesic connecting $y$ to a point in $\widehat{\Delta}_{\Sigma}$ (it intersects $\widehat{\Delta}_{\Sigma}$ orthogonally). This implies that the restriction of $f$ to $f^{-1}((0,\infty))= Y \setminus \widehat{\Delta}_{\Sigma}$ is a submersion. One can in fact show that $Y$ is diffeomorphic to the normal bundle of $\widehat{\Delta}_{\Sigma}$, see~\cite[Section 2]{Sik-87}. We fix once and for all a map $\pi : W \to Y$ lifting the composition $(p_{1},p_{2})\circ \widehat{q}$, so that we have a commutative diagram: 
\[
    \xymatrix{
    & W\ar[r]^{\widehat{q}}\ar[ld]_{f\circ \pi}\ar[d]^{\pi}& X_1\times X_2\ar[d]^{(p_1,p_2)}\\
    \mathbb{R}_{\geq 0}& Y \ar[l]^{f}\ar[r]_{q} &\Sigma \times \Sigma.
    }
\]
Observe that the maps $f$ and $\pi$ are proper. Thus, the same applies to $f\circ\pi$. Since $p_1$ and $p_2$ induce surjections on fundamental groups, the fibre product of $Y$ and $X_1 \times X_2$ is connected, hence can be identified with $W$. We then define:
$$\widehat{Z}:=\pi^{-1}(\widehat{\Delta}_{\Sigma}).$$
The description of $W$ as a fibre product implies that $\widehat{q}$ induces a diffeomorphism between $\widehat{Z}$ and $Z$.

In the next two sections, we shall prove the following theorem:

\begin{theorem}\label{thm:FibreProduct}
The manifold $W$ and the group $\pi_{1}(Z)$ enjoy the following properties:
    \begin{enumerate}
        \item the inclusion $\widehat{Z} \hookrightarrow W$ induces an isomorphism on homotopy groups up to degree $n_{1}+n_{2}-2$, in particular $W$ and $Z$ have isomorphic fundamental groups,
        \item $H_{n_1+n_2}(W,\mathbb{Q})$ is infinite dimensional,
        \item $\pi_1(Z)$ is the group theoretic fibre product of the surjective homomorphisms $p_{1\ast}:\pi_1(X_{1})\to \pi_1(\Sigma)$ and $p_{2\ast}:\pi_1(X_{2})\to \pi_1(\Sigma)$,
        \item if $X_1$ and $X_2$ are aspherical, then $\pi_1(Z)$ is of finiteness type $\mathscr{F}_{n_1+n_2-1}$ but not of type ${\rm FP}_{n_1+n_2}(\mathbb{Q})$.
            \end{enumerate}
\end{theorem}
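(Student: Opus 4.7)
The plan is to establish parts (1) and (2) by a Morse-theoretic analysis of the proper function $F := f \circ \pi : W \to \R_{\ge 0}$, and to deduce (3) and (4) formally from these. The first task is to locate the critical points of $F$ outside $\widehat{Z} = F^{-1}(0)$. Since $f$ is a submersion on $Y \setminus \widehat{\Delta}_\Sigma$ and $\pi$ locally factors through $(p_1, p_2)$ via the local diffeomorphism $\widehat{q}$, critical points of $F$ in $W \setminus \widehat{Z}$ are exactly the points $w$ where the image of $d\pi_w$ is contained in $\ker(df_{\pi(w)})$. Using the product structure of the metric on $Y$ inherited from $\Sigma \times \Sigma$ together with the hypothesis that $p_1$ and $p_2$ have only finitely many critical points with disjoint critical values, one should show that these critical points form a discrete set: for each $q_1 \in \mathrm{Crit}(p_1)$ the condition $\text{Im}(d(p_1,p_2)_{(q_1, x_2)}) \subset \ker(df)$ forces $x_2$ to lie over finitely many points of $\Sigma$, and symmetrically for $\mathrm{Crit}(p_2)$. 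This step is inspired by Sikorav's ideas \cite{Sik-87} on fibre products combined with the complex Morse techniques of \cite{DimPapSuc-09-II, NicPy-21}.

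Next I would compute the Morse index at each such critical point. At a lift of $(q_1, x_2) \in X_1 \times X_2$ with $q_1$ an isolated holomorphic critical point of $p_1$ and $x_2$ regular for $p_2$, the map $p_1$ is locally modelled by a holomorphic isolated singularity in complex dimension $n_1$; combining standard complex Morse-theoretic local models for isolated holomorphic singularities (which contribute an attaching handle of real dimension $n_1$) with the transverse real dimension $n_2$ contribution from $p_2$ and a direct Hessian computation in tubular coordinates for the distance-squared function $f$, one should find that each such critical point is non-degenerate of Morse index $\ge n_1 + n_2$. A symmetric argument handles lifts of points $(x_1, q_2)$ with $q_2 \in \mathrm{Crit}(p_2)$. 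Standard Morse theory then yields that $W$ is homotopy equivalent to $\widehat{Z}$ with cells of real dimension $\ge n_1 + n_2$ attached. This immediately proves (1), since attaching such cells cannot alter $\pi_k$ for $k \le n_1 + n_2 - 2$. For (2), since $p_1$ has at least one critical point by hypothesis and the covering $\widehat{q}: W \to X_1 \times X_2$ has infinite degree (as $\pi_1(\Sigma)$ is infinite), there are infinitely many $(n_1+n_2)$-cells in the Morse decomposition; using that $\widehat{Z}$ has finite rational homology (it is a closed manifold of real dimension $2(n_1+n_2) - 2$), a rank count on the Morse chain complex in degree $n_1+n_2$ shows that infinitely many of these cells must survive to produce linearly independent classes in $H_{n_1+n_2}(W, \Q)$.

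Part (3) then follows formally: by construction $\widehat{q}: W \to X_1 \times X_2$ has fundamental group $(p_1, p_2)_\ast^{-1}(\pi_1(\Delta_\Sigma))$, which is exactly the group-theoretic fibre product $P$ of $p_{1*}$ and $p_{2*}$, and (1) identifies $\pi_1(Z) = \pi_1(\widehat{Z})$ with $\pi_1(W)$. For (4), assuming $X_1$ and $X_2$ aspherical, $W$ is a cover of the aspherical space $X_1 \times X_2$ and is therefore itself a $K(P, 1)$. From (1), $\pi_k(Z) = 0$ for $2 \le k \le n_1 + n_2 - 2$, so a $K(P, 1)$ can be built from the finite CW complex $Z$ by attaching cells of dimension $\ge n_1 + n_2$ to kill higher homotopy, which shows $P$ is of type $\mathscr{F}_{n_1 + n_2 - 1}$. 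Meanwhile $H_{n_1 + n_2}(P, \Q) = H_{n_1 + n_2}(W, \Q)$ is infinite dimensional by (2), which rules out $\mathrm{FP}_{n_1 + n_2}(\Q)$. The main obstacle is the Morse analysis of the first two paragraphs, where the geometry of the non-positively curved space $Y$, the Lefschetz-type local structure of the isolated holomorphic singularities of $p_1$ and $p_2$, and the distance-squared function $f$ must all cooperate to produce discrete critical points of the correct index, together with the equivariant counting needed to deduce infinite-dimensionality of $H_{n_1+n_2}(W, \Q)$ from infinitely many lifted critical points.
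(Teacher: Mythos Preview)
Your deductions of (3) and (4) from (1) and (2) are fine, and the overall strategy (Morse theory for the proper function $F=f\circ\pi$) matches the paper's. The genuine gap is in the Morse analysis itself. The crucial geometric fact you have not isolated is that for every $y\in Y\setminus\widehat{\Delta}_\Sigma$, the differential $d_yf$ is nonzero when restricted to \emph{each} of the two factor subspaces $E_{y,1},E_{y,2}\subset T_yY$ (this uses the description of the gradient of the distance to the diagonal in $\mathbb{H}\times\mathbb{H}$). Once this is established, a critical point of $F$ away from $\widehat{Z}$ must have $d\pi=0$, i.e.\ must be a lift of a point in $\mathrm{Crit}(p_1)\times\mathrm{Crit}(p_2)$. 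In particular your second paragraph, which computes the Hessian at a lift of $(q_1,x_2)$ with $x_2$ \emph{regular} for $p_2$, is analysing a point that is not critical for $F$; there is no ``symmetric'' dichotomy of the type you describe.

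At the actual critical points (zeros of $d\pi$), the function $F$ is in general \emph{not} Morse: the isolated singularities of $p_1$ and $p_2$ can have Milnor number $\mu>1$, and even in the Morse case the composition with the real function $f$ need not be nondegenerate. Your assertion of nondegeneracy with index $\ge n_1+n_2$ is therefore unjustified, and the index bookkeeping (``handle of real dimension $n_1$'' plus ``transverse real dimension $n_2$'') does not add up. The paper resolves this by a two-step perturbation: first replace $f$ near each critical value of $\pi$ by the real part of a complex-linear form (the $C^1$-stability of the fact that $d_yf|_{E_{y,i}}\neq 0$ guarantees no new critical points appear), so that near each zero of $d\pi$ the function becomes ${\rm Re}(A\circ\pi)$ for a holomorphic $A$; then morsify $A\circ\pi$ holomorphically. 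The resulting function $h$ has every critical point of index \emph{exactly} $n_1+n_2$. This exactness is what makes (2) work: with only $(n_1+n_2)$-cells attached, nothing kills $(n_1+n_2)$-classes from above, and since $b_{n_1+n_2-1}(\widehat{Z})<\infty$ only finitely many of the infinitely many attached cells can be boundaries. Your version with index $\ge n_1+n_2$ would allow $(n_1+n_2+1)$-cells, and then the rank count no longer forces $H_{n_1+n_2}(W,\mathbb{Q})$ to be infinite.
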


Theorem~\ref{thm:Livne} from the introduction follows immediately from Theorem~\ref{thm:FibreProduct}. Let us now turn to the examples. In \cite{Liv-81}, Livn\'e constructed a family of smooth compact complex algebraic surfaces $S_d(N)$ by taking ramified covers of the compactification $E(N)$ of the universal elliptic curve with full level $N$ structure for every integer $N\geq 5$. The holomorphic map $E(N)\to X(N)$ to the compactification of the moduli space of elliptic curves with a full level $N$ structure has isolated singularities. Since the ramified covering is along a divisor whose irreducible components are sections of $E(N)\to X(N)$, the induced holomorphic map $S_d(N)\to X(N)$ also has isolated singularities. Livn\'e proves that for $(N,d)\in \left\{(7,7), (8,4), (9,3), (12,2)\right\}$ the surface $S_d(N)$ is a complex  ball quotient. Since $X(N)$ is a closed hyperbolic Riemann surface, this provides examples of 2-dimensional complex ball quotients admitting maps with isolated singularities to closed hyperbolic Riemann surfaces, to which one can apply Theorem~\ref{thm:Livne}. 

The proof of Theorem~\ref{thm:FibreProduct} is given in the next two subsections.

\subsection{A perturbation of the distance function}\label{sec:perturbation}
In this section, by perturbing the function $f\circ \pi$, we will construct a proper function 
$$h : W \to \R_{\ge 0}$$
which is Morse outside of $\widehat{Z}$ and all of whose critical points outside of $\widehat{Z}$ have index equal to $n_{1}+n_{2}$. This function will be used in Section~\ref{sec:ComplexMorse} to prove Thereom~\ref{thm:FibreProduct}. 

We start by observing that $f\circ \pi|_{W\setminus \widehat{Z}}$ has isolated critical points, coinciding with the zeros of the differential of $\pi$. In the lemma below, we denote by $d_{w}\varphi$ the differential of a smooth function $\varphi$ at a point $w$. 

\begin{lemma}\label{lem:crdi}
\label{lem:Sing}
    Fix a point $w\in W\setminus \widehat{Z}$. The following are equivalent:
    \begin{enumerate}
        \item ${\rm d}_{w}(f\circ\pi)=0$
        \item ${\rm d}_w \pi=0$
        \item ${\rm d}_{\widehat{q}(w)}(p_1,p_2)=0$.
    \end{enumerate}
If a function $f' : W \to \R_{\ge 0}$ is sufficiently $C^{1}$-close to $f$ near $w$, the conditions above are also equivalent to $d_{w}(f'\circ \pi)=0$.      
\end{lemma}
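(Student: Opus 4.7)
The plan is to establish the three-way equivalence in stages, with the main geometric content in the implication $(1)\Rightarrow(2)$, and then to observe that the argument is stable under $C^1$-perturbation.

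First, $(2)\Leftrightarrow(3)$ is a formal consequence of the commutative diagram: differentiating $q\circ \pi=(p_1,p_2)\circ \widehat{q}$ at $w$ and using that $\widehat{q}$ and $q$ are covering maps, hence local diffeomorphisms with invertible differentials, gives that $d_w\pi$ and $d_{\widehat{q}(w)}(p_1,p_2)$ vanish simultaneously. The implication $(2)\Rightarrow(1)$ is then immediate from the chain rule applied to $f\circ \pi$.

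The heart of the lemma is $(1)\Rightarrow(2)$, which I would prove by contrapositive. Set $y=\pi(w)$ and $(x_1,x_2)=\widehat{q}(w)$, and assume $d_w\pi\neq 0$. Since $Y$ carries the metric pulled back from the product metric on $\Sigma\times \Sigma$, the map $q$ is a local isometry. As $y\notin \widehat{\Delta}_\Sigma$, the vector $\nabla f(y)$ is nonzero and tangent to the unique minimising geodesic from $y$ to $\widehat{\Delta}_\Sigma$, which hits $\widehat{\Delta}_\Sigma$ orthogonally. Projecting by $q$ yields a geodesic $t\mapsto (\alpha(t),\beta(t))$ in $\Sigma\times \Sigma$ which hits $\Delta_\Sigma$ orthogonally; since the normal space of $\Delta_\Sigma$ at any diagonal point is the antidiagonal $\{(v,-v)\}$, the terminal velocity satisfies $\beta'=-\alpha'$ at the endpoint, and constancy of speed of the factor geodesics $\alpha$ and $\beta$ then forces $|\alpha'(t)|=|\beta'(t)|$ throughout. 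Consequently both components of $dq_y(\nabla f(y))\in T_{\sigma_1}\Sigma\times T_{\sigma_2}\Sigma$ are nonzero, where $(\sigma_1,\sigma_2)=(p_1(x_1),p_2(x_2))$. On the other hand, the image of $d(p_1,p_2)_{(x_1,x_2)}$ is of the form $V_1\times V_2$, with each $V_i$ equal to either $\{0\}$ or the whole of $T_{\sigma_i}\Sigma$, since $p_i$ has one-dimensional target. The assumption $d_w\pi\neq 0$ forces at least one $V_i$ to be full, so the orthogonality condition $dq_y(\nabla f(y))\perp V_1\times V_2$, which expresses $d_w(f\circ \pi)=0$ via the local isometry $q$, would force the corresponding factor of $dq_y(\nabla f(y))$ to vanish, a contradiction. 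This establishes $(1)\Rightarrow(2)$.

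Finally, for the perturbation statement, conditions (2) and (3) do not involve $f$, so $(2)\Rightarrow d_w(f'\circ \pi)=0$ is immediate from the chain rule for any $f'$. For the reverse, $C^1$-closeness of $f'$ to $f$ near $w$ ensures that $\nabla f'(y)$ stays close to $\nabla f(y)$; in particular, both components of $dq_y(\nabla f'(y))$ remain nonzero, and the orthogonality argument above goes through verbatim with $f'$ in place of $f$. The main obstacle I expect lies in the geometric argument for $(1)\Rightarrow(2)$---specifically, in using the product structure of the metric and the orthogonality of the minimising geodesic to the diagonal to force both components of $dq_y(\nabla f(y))$ to be nonzero. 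Once this is in hand, the ``rectangular'' shape of the image of $d(p_1,p_2)$ closes the argument.
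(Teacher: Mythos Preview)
Your proof is correct and follows essentially the same route as the paper. Both arguments reduce $(1)\Rightarrow(2)$ to the key geometric fact that, at any point $y\in Y\setminus\widehat{\Delta}_\Sigma$, the differential $d_yf$ is nonzero on each of the two factor subspaces $E_{y,1}$, $E_{y,2}$ of $T_yY$ (equivalently, both product components of $\nabla f(y)$ are nonzero); you deduce this from the fact that the minimising geodesic to $\widehat{\Delta}_\Sigma$ meets it orthogonally in the antidiagonal direction, while the paper phrases the same observation in terms of the midpoint description of the nearest-point projection in $\mathbb{H}\times\mathbb{H}$, and both conclude by noting that when $d_w\pi\neq 0$ its image contains a full factor, together with the $C^1$-openness of the factorwise nonvanishing condition.
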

\begin{proof}
The equivalence of (2) and (3) is a direct consequence of the fact that $q$ and $\widehat{q}$ are covering maps. By the chain rule, (2) implies (1). We continue the proof by showing that (1) implies (2). 

Recall that the restriction of $f$ to $Y\setminus \widehat{\Delta}_{\Sigma}$ is a submersion. Let $y\in Y\setminus \widehat{\Delta}_{\Sigma}$ and let $$(s_1,s_2):=q(y)\in \Sigma\times \Sigma.$$ The orthogonal decomposition $$T_{(s_1,s_2)}\Sigma\times\Sigma = T_{s_1}\Sigma \oplus T_{s_2}\Sigma$$ pulls back to a decomposition $T_yY= E_{y,1}\oplus E_{y,2}$, with $E_{y,i}=({\rm d}_{y}q)^{-1}(T_{s_i}\Sigma)$. Let $\mathbb{H}$ be the hyperbolic plane, thought of as the universal cover of $\Sigma$. The orthogonal projection of a point $(a,b)\in \mathbb{H}\times \mathbb{H}$ on the diagonal $\Delta_{\mathbb{H}}$ is the point $(z,z)$ where $z$ is the midpoint on the geodesic going from $a$ to $b$. The gradient flow of the distance to the diagonal $\Delta_{\mathbb{H}}$ is obtained by ``flowing" simultaneously $a$ and $b$ toward $z$. From this one sees easily that the differential of the function $f$ is nonzero when restricted to each of the two subspaces $E_{y, j}$ ($j=1, 2$).

Let $w\in W\setminus \widehat{Z}$ with ${\rm d}_w\pi \neq 0$. We write $\widehat{q}(w)=(x_{1},x_{2})$. If $d_w\pi$ is a submersion, then $d_w(f\circ \pi)\neq 0$. Thus, we may assume that ${\rm d}_w \pi$ has $\C$-rank $1$. This is the case if exactly one of the $x_i$'s is a critical point of $p_i$, say $x_1$. In that case we have that $\im ({\rm d}_w\pi)=E_{\pi(w),2}$ and therefore ${\rm d}_w(f\circ \pi)\neq 0$ by the previous observation applied to $y=\pi (w)$. This completes the proof of the equivalence of (1), (2) and (3). The condition
$$d_{\pi (w)}f (E_{\pi (w),j})\neq 0\;\;\;\; (j=1, 2)$$ being $C^1$-open, the assertion about perturbations of $f$ follows immediately.
\end{proof}

We now construct the function $h$. Let $${\rm CV}((p_{1},p_{2}))\subset \Sigma \times \Sigma$$ be the product of the set of critical values of $p_1$ with the set of critical values of $p_2$. In other words, ${\rm CV}((p_{1},p_{2}))$ is the image under $(p_{1},p_{2})$ of the set of zeros of the differential of $(p_{1},p_{2})$. This set is finite and does not intersect the diagonal
$$\Delta_{\Sigma}\subset \Sigma \times \Sigma,$$
by assumption. Hence the set ${\rm CV}(\pi)=q^{-1}({\rm CV}((p_{1},p_{2})))$ is discrete and does not intersect $\widehat{\Delta}_{\Sigma}$. By compactness it also avoids a closed neighborhood $U$ of $\widehat{\Delta}_{\Sigma}$. We will perturb $f$ near each point of ${\rm CV}(\pi)$ to obtain a function $f' : Y \to \R_{\ge 0}$ and will later perturb $f'\circ \pi$ near the zero set of the differential of $\pi$ to obtain the function $h$.  

We pick a family of disjoint closed balls $$(B(v))_{v\in {\rm CV}(\pi)},$$ which are also disjoint from $U$. 
We identify $B(v)$ with the closed unit ball $B_{1}$ of $\C^{2}$, via some holomorphic coordinates. We denote by $B_r$ the closed ball of radius $r$ and use the symbol $\vert \vert \cdot \vert \vert$ simultaneously for the Euclidean norm on $\C^{2}$ and for the dual norm on $(\C^{2})^{\ast}$ defined by
$$\vert \vert \varphi \vert \vert = \underset{\vert \vert v \vert \vert \le 1}{{\rm sup}}\vert \varphi (v)\vert,$$
for $\varphi \in (\C^{2})^{\ast}$. We fix once and for all a smooth function
$$\chi : \C^2\to [0,1]$$ such that $\chi$ is equal to $0$ on a neighborhood of the origin, and is equal to $1$ on a neighborhood of the set $\{\vert \vert z \vert \vert \ge 1\}$. We now pick a constant $D\ge 1$ such that the inequalities:
$$\vert \vert d_{z}\chi\vert \vert \le D,$$
$$\vert f(z)- (f(0) + d_{0}f(z))\vert\le D \vert \vert z \vert \vert^{2},$$
$$\vert \vert d_{z}f-d_{0}f\vert \vert \le D \vert \vert z\vert \vert$$ hold for each point $z\in B_1$. For $r\in (0,1)$, we let $$h_{r}(z)=\chi (\frac{z}{r})f(z)+\left(1-\chi (\frac{z}{r})\right)(f(0)+d_{0}f(z)).$$
The function $h_r$ is equal to $f$ near the boundary of $B_r$ and we have on $B_r$:
\begin{equation}
\vert f(z)-h_{r}(z)\vert \le  D r^2
\end{equation}
and
\begin{equation}
d_{z}h_{r}=d_{0}f+E(z,r)
\end{equation}
where the norm of the linear form $E(z,r)$ is bounded by $(D+D^{2})r$. In particular, there exists $r_0>0$ such that for $r\le r_{0}$ the differential $d_{z}h_{r}$ cannot vanish for $z\in B_r$. We also assume that $r_0$ is chosen small enough so that $h_{r}>0$ and so that the critical points of $h_{r}\circ \pi$ on $\pi^{-1}(B_r)$ are the zeros of the differential of $\pi$ (for $r\le r_{0}$); this is possible thanks to Lemma~\ref{lem:crdi}. Since $h_{r_{0}}$ coincides with $f$ near the boundary of $B_{r_{0}}$ we can modify $f$ by replacing it by $h_{r_{0}}$ in the ball $B_{r_{0}}$. We perform this modification in all the balls $(B(v))_{v\in {\rm CV}(\pi)}$ and obtain a new function $f' : Y \to \R_{\ge 0}$. Hence the critical points of the function
$$f'\circ \pi : W \to \R_{\ge 0}$$ are the zeros of $d\pi$, away from $\widehat{Z}$. Let $w\in W \setminus \widehat{Z}$ be such a critical point. We pick coordinates as before on the ball $B(\pi (w))$. Up to a constant, the map $f'$ is equal to a linear form $\ell$ near $\pi (w)$. We write $\ell ={\rm Re}(A)$ where $A : \C^2 \to \C$ is a complex linear form. The holomorphic function $A\circ \pi$ has an isolated critical point at $w$. Let $\mu$ be its Milnor number (see Section 7 and Appendix B in~\cite{Mil-68}).

We now choose coordinates on a small closed ball $B$ around $w$. Let $B' \Subset B$ be a smaller closed ball. We assume that $A\circ \pi$ has only one critical point in $B$ (i.e. $w$, identified with the origin). There exist complex linear forms $u : \C^{n_{1}+n_{2}}\to \C$ arbitrarily close to $0$ so that $A\circ \pi+u$ has no critical point near the boundary of $B$ and $\mu$ nondegenerate critical points in the interior of $B$, all contained in $B'$. We modify $f'\circ \pi$ inside $B$ so that it equals ${\rm Re}(A\circ \pi+u)$ inside $B'$, $f'\circ \pi$ near the boundary of $B$, and so that its only critical point are inside $B'$. We can assume that the perturbation still takes positive values and is at distance at most $1$ from $f'\circ \pi$.   

 Finally, we denote by $h$ the function obtained by modifying $f'\circ \pi$ as above in a neighbourhood of each zero of $d\pi$. Since $h$ is the real part of a holomorphic Morse function in a neighbourhood of each of its critical points, they are all non-degenerate of index $n_1 + n_2$. To summarize, we have proved:

\begin{proposition}
There exists a smooth function 
$$h : W \to \R_{\ge 0}$$ such that:
\begin{enumerate}
\item $h$ coincides with $f\circ \pi$ on a neighborhood of $\widehat{Z}$,
\item $h$ is proper,
\item the set of critical points of $h$ in $W\setminus \widehat{Z}$ is discrete and each critical point is nondegenerate of index $n_{1}+n_{2}$.
\end{enumerate}
\end{proposition}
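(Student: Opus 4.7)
The plan is to verify the three properties for the function $h$ assembled in the preceding construction, checking them in order. For (1), the first stage of the perturbation is supported in the balls $B(v)$ for $v\in {\rm CV}(\pi)$, which were chosen to lie outside a closed neighborhood $U$ of $\widehat{\Delta}_{\Sigma}$; the second stage is supported in small balls $B$ around the (finitely many) zeros of $d\pi$ in $W\setminus \widehat{Z}$. Both sites sit at positive distance from $\widehat{Z}$, so $h=f\circ \pi$ on a neighborhood of $\widehat{Z}$. For (2), $h$ agrees with $f\circ \pi$ outside a compact subset of $W$, and $f\circ \pi$ is proper by the setup, hence so is $h$.

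The heart of the argument is (3). After the first stage, the bound $\|d_{z}h_{r}-d_{0}f\|\le (D+D^{2})r$ together with the nonvanishing of $d_{0}f$ outside $\widehat{\Delta}_{\Sigma}$ implies that, for $r\le r_{0}$ sufficiently small, the modified function $f'$ has no critical points inside the balls $B_{r}$. Combined with the perturbative clause of Lemma~\ref{lem:crdi}, this shows that the critical points of $f'\circ \pi$ in $W\setminus \widehat{Z}$ are exactly the zeros of $d\pi$, a finite set (the preimage under $\widehat{q}$ of the finite critical set of $(p_{1},p_{2})$). The second stage addresses each such zero $w$ individually: in local coordinates near $\pi(w)$, $f'$ is the real part of a $\C$-linear form $A$, so $f'\circ \pi$ equals the real part of the holomorphic function $A\circ \pi$, which has an isolated critical point of Milnor number $\mu$ at $w$. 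Adding a sufficiently small generic complex linear form $u$ splits this singularity into $\mu$ nondegenerate holomorphic critical points inside $B'\Subset B$, and the cut-off defining $h$ introduces no new critical points between $B'$ and $\partial B$.

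The remaining point is that each of these holomorphic nondegenerate critical points gives a real Morse critical point of index exactly $n_{1}+n_{2}$. This is a purely local calculation: in holomorphic Morse coordinates the function is ${\rm Re}(z_{1}^{2}+\cdots+z_{n_{1}+n_{2}}^{2})=\sum_{j=1}^{n_{1}+n_{2}}(x_{j}^{2}-y_{j}^{2})$, whose real Hessian has signature $(n_{1}+n_{2},n_{1}+n_{2})$. The step I expect to require the most care is the bookkeeping of the two-stage perturbation, in particular ensuring that the $C^{1}$-smallness of each stage is fine enough that the conclusion of Lemma~\ref{lem:crdi} applies and that both the position and nondegeneracy of the newly created critical points are preserved by the cut-off.
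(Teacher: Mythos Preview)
Your overall outline matches the paper's construction, but there is a genuine error in your treatment of properness and of the critical set. You assert that the zeros of $d\pi$ in $W\setminus\widehat{Z}$ form a finite set, being ``the preimage under $\widehat{q}$ of the finite critical set of $(p_{1},p_{2})$''. But $\widehat{q}:W\to X_{1}\times X_{2}$ is an \emph{infinite} covering (its deck group is $\pi_{1}(\Sigma\times\Sigma)/\pi_{1}(\Delta_{\Sigma})$, which is infinite for $\Sigma$ hyperbolic), so this preimage is an infinite discrete set. Likewise ${\rm CV}(\pi)=q^{-1}({\rm CV}((p_{1},p_{2})))\subset Y$ is infinite discrete, so the balls $B(v)$ in which the first-stage perturbation is performed do \emph{not} lie in a compact subset of $Y$. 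Consequently your claim that ``$h$ agrees with $f\circ\pi$ outside a compact subset of $W$'' is false, and your properness argument in (2) collapses.

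The correct argument for properness is the one implicit in the paper's bounds: the first-stage modification satisfies $|f-f'|\le D r_{0}^{2}$ on each $B(v)$ (and one may take $r_{0}$ small enough that this is at most $1$), and the second-stage modification is arranged to be ``at distance at most $1$ from $f'\circ\pi$''. Hence $|h-f\circ\pi|$ is uniformly bounded on all of $W$, and since $f\circ\pi$ is proper, so is $h$. Once you replace ``finite'' by ``discrete'' throughout and use this boundedness argument for (2), the rest of your verification of (1) and (3) is correct and follows the paper's approach.
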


\subsection{Conclusion of the proof}
\label{sec:ComplexMorse}

We start this section by two propositions which will easily imply Theorem~\ref{thm:FibreProduct}. 

\begin{proposition}\label{prop:attaching-n-cells}
     The inclusion $\widehat{Z} \hookrightarrow W$ induces an isomorphism on homotopy groups up to degree $n_{1}+n_{2}-2$.  
\end{proposition}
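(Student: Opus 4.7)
The plan is to apply standard Morse theory to the proper function $h : W \to \mathbb{R}_{\geq 0}$ constructed in Section~\ref{sec:perturbation}, and then conclude via the long exact sequence of the pair $(W, \widehat{Z})$, using that $W$ is built up to homotopy from $\widehat{Z}$ by attaching cells of real dimension $n_{1}+n_{2}$.

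I would first observe that $\widehat{Z} = h^{-1}(0)$: since $h$ coincides with $f\circ\pi$ on a neighbourhood of $\widehat{Z}$ and $f$ vanishes exactly on $\widehat{\Delta}_{\Sigma}$, one has $h^{-1}(0) = \pi^{-1}(\widehat{\Delta}_{\Sigma}) = \widehat{Z}$. Next I would produce a small regular value $\epsilon>0$ of $h$ such that $h^{-1}[0,\epsilon]$ deformation retracts onto $\widehat{Z}$. The key input is that $Y$ is diffeomorphic to the normal bundle of $\widehat{\Delta}_{\Sigma}$, so for $\epsilon$ small enough the set $f^{-1}[0,\epsilon]$ is a closed tubular neighbourhood of $\widehat{\Delta}_{\Sigma}$ which retracts onto $\widehat{\Delta}_{\Sigma}$ via the gradient flow of $-f$. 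By construction the critical values of $\pi$ are disjoint from a fixed neighbourhood $U$ of $\widehat{\Delta}_{\Sigma}$, so choosing $\epsilon$ small enough that $f^{-1}[0,\epsilon]\subset U$ makes $\pi$ a submersion on $\pi^{-1}(f^{-1}[0,\epsilon])$; this lets me lift the tubular-neighbourhood retraction to the desired deformation retraction of $h^{-1}[0,\epsilon]$ onto $\widehat{Z}$.

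For any regular value $C>\epsilon$, properness of $h$ makes $h^{-1}[0,C]$ compact, and by property (3) of $h$ it contains only finitely many critical points of $h$ outside $\widehat{Z}$, each nondegenerate of real index $n_{1}+n_{2}$ (since near each such point $h$ is, up to an affine change, the real part of a holomorphic Morse function in $n_{1}+n_{2}$ complex variables). Standard Morse theory then implies that $h^{-1}[0,C]$ has the homotopy type of $h^{-1}[0,\epsilon]$, and hence of $\widehat{Z}$, with finitely many $(n_{1}+n_{2})$-cells attached. In particular the pair $(h^{-1}[0,C], \widehat{Z})$ is $(n_{1}+n_{2}-1)$-connected.

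Finally, since $h$ is proper, $W = \bigcup_{C>0} h^{-1}[0,C]$ is an exhaustion by compact sets, so (relative) homotopy groups of $W$ are colimits of those of the sublevel sets. Passing to the colimit shows that $(W,\widehat{Z})$ itself is $(n_{1}+n_{2}-1)$-connected, and the long exact sequence of the pair then yields that $\pi_{i}(\widehat{Z}) \to \pi_{i}(W)$ is an isomorphism for $i\le n_{1}+n_{2}-2$. I expect the main technical obstacle to be the careful verification of the collar retraction near $\widehat{Z}$ (in particular, lifting the tubular normal-bundle structure from $Y$ through the non-submersive map $\pi$); the remainder is routine Morse theory together with a colimit argument handling the noncompactness of $W$.
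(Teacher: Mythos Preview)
Your proposal is correct and follows essentially the same line as the paper's proof: both use the proper Morse function $h$ from Section~\ref{sec:perturbation}, identify a small sublevel set $h^{-1}[0,\varepsilon]$ as a homotopy model for $\widehat{Z}$, and then invoke standard Morse theory (all critical points of index $n_{1}+n_{2}$) together with an exhaustion argument to pass to $W$. The only noteworthy difference is in justifying that $h^{-1}[0,\varepsilon]$ retracts onto $\widehat{Z}$: you propose lifting the retraction of $f^{-1}[0,\varepsilon]$ onto $\widehat{\Delta}_{\Sigma}$ through the submersion $\pi$, whereas the paper argues directly in $W$, using that $\widehat{Z}$ is a compact smooth submanifold and hence has a tubular neighbourhood which is interleaved with the sublevel sets $h^{-1}[0,\delta]$; the latter route sidesteps the ``lifting'' step you flagged as the main technical obstacle.
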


\begin{proof} Since $h=f\circ \pi$ close enough to $\widehat{Z}$ and since $f\circ \pi$ has no critical points near $\widehat{Z}$, besides the points of $\widehat{Z}$ themselves, we can pick $\varepsilon >0$ such that the inclusion $\widehat{Z} \hookrightarrow h^{-1}([0,\varepsilon])$ is a homotopy equivalence. This follows from the fact that any tubular neighbourhood of $\widehat{Z}$ contains $h^{-1}([0,\delta])$ for some $\delta >0$ and conversely that for any $\delta >0$, $h^{-1}([0,\delta])$ contains a tubular neighbourhood of $\widehat{Z}$.

Since the set of critical points of $h$ is discrete away from $\widehat{Z}$, and since all these critical points have index $n_{1}+n_{2}$, for every regular values $b\ge a$, the space $h^{-1}([0,b])$ is obtained from $h^{-1}([0,a])$, up to homotopy, by attaching finitely many cells of dimension $n_{1}+n_{2}$. In particular the inclusion $h^{-1}([0,\varepsilon])\hookrightarrow W$ induces an isomorphism on homotopy groups up to degree $n_{1}+n_{2}-2$. Combined with the observation from the previous paragraph, this fact implies the conclusion of the proposition.
\end{proof}

\begin{proposition}\label{prop:infinite-homology}
    The $(n_1+n_2)$-th homology group $H_{n_1+n_2}(W,\mathbb{Q})$ is infinite dimensional.
\end{proposition}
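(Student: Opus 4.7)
The plan is to use the perturbed distance function $h : W \to \mathbb{R}_{\geq 0}$ constructed in Section~\ref{sec:perturbation} together with Morse theory and the long exact sequence of the pair $(W, \widehat{Z})$. The key observation is that $h$ has \emph{infinitely many} critical points in $W \setminus \widehat{Z}$, all of index $n_1 + n_2$, while $\widehat{Z} \cong Z$ is compact.

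First, I would show that the critical set of $h$ in $W \setminus \widehat{Z}$ is infinite. By Lemma~\ref{lem:crdi} and the construction in Section~\ref{sec:perturbation}, every critical point of $h$ in $W \setminus \widehat{Z}$ projects under $\widehat{q}$ onto a pair $(x_1, x_2) \in X_1 \times X_2$ such that $dp_1(x_1) = 0$ and $dp_2(x_2) = 0$; near each such point in $W$, the perturbation produces a positive (Milnor) number of nondegenerate critical points. By hypothesis both $p_1$ and $p_2$ have nonempty critical sets $C_1$ and $C_2$, so $C_1 \times C_2$ is nonempty. Moreover, $\widehat{q}$ is a covering of infinite degree: since the fibres of $p_1$ are connected and $p_{2,\ast}$ is surjective, the map $(p_1,p_2)_\ast : \pi_1(X_1\times X_2) \to \pi_1(\Sigma)\times \pi_1(\Sigma)$ is surjective, and $\pi_1(\Delta_\Sigma)$ has infinite index in $\pi_1(\Sigma)\times \pi_1(\Sigma)$ (as $\Sigma$ is hyperbolic). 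Hence each element of $C_1 \times C_2$ has infinitely many preimages in $W$, producing infinitely many critical points for $h$.

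Next, I would invoke Morse theory for the proper function $h$. Since all critical points in $W \setminus \widehat{Z}$ are nondegenerate of index $n_1+n_2$, and $\widehat{Z}$ is the smooth compact submanifold on which $h$ attains its minimum $0$ (with $h$ equal to the squared distance function near $\widehat{Z}$, which has no other critical points in a neighbourhood), the usual handle attachment argument, applied to each compact sublevel set $h^{-1}([0,t])$ for a regular value $t$ and passed to the limit, shows that $W$ has the homotopy type of a CW-complex obtained from $\widehat{Z}$ by attaching one $(n_1+n_2)$-cell per critical point. Consequently $H_\ast(W,\widehat{Z};\mathbb{Q})$ is concentrated in degree $n_1+n_2$, where it is a $\mathbb{Q}$-vector space of countably infinite dimension.

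Finally, I would conclude using the long exact sequence of the pair $(W,\widehat{Z})$:
$$ H_{n_1+n_2}(W;\mathbb{Q}) \longrightarrow H_{n_1+n_2}(W,\widehat{Z};\mathbb{Q}) \xrightarrow{\;\partial\;} H_{n_1+n_2-1}(\widehat{Z};\mathbb{Q}). $$
As $\widehat{Z}$ is a compact manifold, $H_{n_1+n_2-1}(\widehat{Z};\mathbb{Q})$ is finite-dimensional, so $\ker \partial$ is infinite-dimensional; being a quotient of $H_{n_1+n_2}(W;\mathbb{Q})$, this forces $H_{n_1+n_2}(W;\mathbb{Q})$ to be infinite-dimensional. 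I expect the main point requiring care to be the Morse-theoretic cell-attachment on the noncompact space $W$ in the presence of infinitely many critical points; however, properness of $h$ reduces this to a direct limit of standard compact arguments, so it is a matter of bookkeeping rather than a genuine difficulty.
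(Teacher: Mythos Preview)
Your proof is correct and shares the same Morse-theoretic core as the paper's: the proper function $h$ has infinitely many critical points on $W\setminus\widehat{Z}$, all of index $n_1+n_2$, so $W$ is built from the compact manifold $\widehat{Z}$ by attaching infinitely many $(n_1+n_2)$-cells. The final homological step is organized differently, however. The paper filters $W$ by compact sublevel sets $W_k=h^{-1}([0,a_k])$, observes that the sequence $b_{n_1+n_2-1}(W_k)$ is non-increasing and hence eventually stabilizes, and then uses Mayer--Vietoris to show that for large $k$ the maps $H_{n_1+n_2}(W_k,\Q)\to H_{n_1+n_2}(W_{k+1},\Q)$ are injective with strictly increasing rank, so the direct limit is infinite-dimensional. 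You instead pass directly to the relative homology $H_\ast(W,\widehat{Z};\Q)$ and invoke the long exact sequence of the pair, which is a bit more streamlined and bypasses the stabilization bookkeeping. You also make explicit the infinitude of critical points (via the infinite degree of the covering $\widehat{q}$, coming from the infinite index of the diagonal in $\pi_1(\Sigma)\times\pi_1(\Sigma)$), a point the paper uses tacitly but does not spell out.
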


\begin{proof} This is identical to an argument from~\cite{NicPy-21}. We choose an increasing sequence of regular values $(a_{k})_{k\ge 0}$ of $h$ converging to infinity in such a way that $h^{-1}((a_{k},a_{k+1}))$ always contains at least one critical point of $h$. The group $$H_{n_{1}+n_{2}}(W,\Q)$$ is the direct limit of the groups $H_{n_{1}+n_{2}}(h^{-1}([0,a_{k}]),\Q)$. We write $W_{k}:=h^{-1}([0,a_{k}])$. Since $W_{k+1}$ is obtained from $W_{k}$ by gluing $(n_{1}+n_{2})$-dimensional cells, up to homotopy, the Betti numbers
$$(b_{n_{1}+n_{2}-1}(W_{k}))_{k\ge 0}$$ form a non-increasing sequence. We pick an integer $k_{0}$ such that this sequence is constant for $k\ge k_{0}$. An application of the Mayer-Vietoris sequence then proves that the maps
$$H_{n_{1}+n_{2}}(W_{k},\Q)\to H_{n_{1}+n_{2}}(W_{k+1},\Q)\;\;\;\; (k\ge k_{0})$$ 
are injective and that the sequence $(b_{n_{1}+n_{2}}(W_{k}))_{k\ge k_{0}}$ is strictly increasing. This implies that $H_{n_{1}+n_{2}}(W,\Q)$ is infinite dimensional and concludes the proof. We refer the reader to~\cite[p. 61--62]{NicPy-21} for more details. 
\end{proof}

We can now conclude the proof of Theorem \ref{thm:FibreProduct}. 

\begin{proof}[{Proof of Theorem \ref{thm:FibreProduct}}] The first item of the theorem follows from Propositions~\ref{prop:attaching-n-cells} and from the following remark: since $n_{1}+n_{2}\ge 3$, the inclusion $\widehat{Z} \hookrightarrow W$ induces an isomorphism on fundamental group and since $Z$ and $\widehat{Z}$ are diffeomorphic, $\pi_{1}(Z)$ and $\pi_{1}(W)$ indeed have isomorphic fundamental groups. The second item of the theorem follows from Proposition~\ref{prop:infinite-homology}. To prove the third item, we observe that the fundamental group of $W$ is by construction equal to the fiber product:
\begin{equation}\label{eq:fproducts6}
\{(a,b)\in \pi_{1}(X_{1})\times \pi_{1}(X_{2}), p_{1,\ast}(a)=p_{2,\ast}(b)\}.
\end{equation}
 Since the projection from $\widehat{Z}$ to $Z$ is an isomorphism, we obtain that the morphism induced by the inclusion of $Z$ in $X_{1}\times X_{2}$ is injective at the level of fundamental groups, with image given by the subgroup~\eqref{eq:fproducts6}.

Finally, if $X_1$ and $X_2$ are aspherical, so is $W$. This means that $W$ is a $K(\pi_1(W),1)$. Proposition~\ref{prop:infinite-homology} then implies that 
$$\pi_{1}(Z)\cong \pi_{1}(W)$$
is not of type ${\rm FP}_{n_{1}+n_{2}}(\Q)$ while Proposition~\ref{prop:attaching-n-cells} implies that it is of type $\mathscr{F}_{n_{1}+n_{2}-1}$. This completes the proof of Theorem \ref{thm:FibreProduct}. 
\end{proof}


\section{Subgroups of $n$-iterated Kodaira fibrations}
\label{sec:Koc-Vid}
The goal of this section is to use methods from complex geometry to provide a new proof of a result of Kochloukova and Vidussi on finiteness properties of subgroups of fundamental groups of $n$-iterated Kodaira fibrations. Along the way, our proof shows that certain iterated Kodaira fibrations admit finite Albanese maps. This provides more examples of closed aspherical K\"ahler manifolds to which the methods of the present article and of~\cite{LloPy-22} can be applied. 

\subsection{Constructing $n$-iterated Kodaira fibrations}
We start by recalling the inductive definition of $n$-iterated Kodaira fibrations.
\begin{definition}
 Let $X$ be a compact complex manifold of dimension $n\geq 1$. If $n=2$, then we call $X$ a \emph{$2$-iterated Kodaira fibration} (or simply a \emph{Kodaira fibration}) if there exists a holomorphic submersion $\pi: X\to Y$ with connected fibres onto a closed hyperbolic Riemann surface, which is not isotrivial. If $n>2$, then we call $X$ an \emph{$n$-iterated Kodaira fibration} if there is a holomorphic submersion $\pi:X\to Y$ with connected fibres onto an $(n-1)$-iterated Kodaira fibration $Y$, which is not isotrivial.
\end{definition}

We call a group $G$ a \emph{polysurface group} of length $n$ if there is a filtration
\[
    1=G_0\unlhd G_1\unlhd G_2\unlhd \dots \unlhd G_n=G
\]
such that $G_i/G_{i-1}$ is the fundamental group of an orientable closed hyperbolic surface for $1\leq i \leq n$. By construction fundamental groups of $n$-iterated Kodaira fibrations are polysurface groups. We call a group $G$ \emph{irreducible} if no finite index subgroup of $G$ decomposes as a direct product of two non-trivial groups.

In \cite{LloPy-21}, we inductively constructed $n$-iterated Kodaira fibrations with injective monodromy, starting from the Kodaira--Atiyah examples of ($2$-iterated) Kodaira fibrations, which are known to have injective monodromy. The induction step is given by the following theorem, summarising results from \cite[Section 5]{LloPy-21}.
\begin{theorem}\label{thm:ItKodFib}
    Let $X$ be an $n$-iterated Kodaira fibration with injective monodromy. Then there are finite coverings $X',~ X''\to X$ such that $X'$ is the base space of an $(n+1)$-iterated Kodaira fibration $Z\to X'$ with injective monodromy and there is a finite map $Z\to X'\times X''$ which defines a ramified covering of its image.
\end{theorem}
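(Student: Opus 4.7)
The plan is to adapt the classical Atiyah--Kodaira branched-covering construction to the iterated setting. Let $f: X \to Y$ be the submersion realising $X$ as an $n$-iterated Kodaira fibration, and form the fibred product $W := X \times_Y X$ with its first projection $\pi_1 : W \to X$. This is a holomorphic submersion whose fibres coincide with those of $f$ and which carries a canonical holomorphic diagonal section $\Delta: X \hookrightarrow W$. The idea is to branch-cover $W$ along several disjoint sections obtained from $\Delta$, thereby raising the genus of the fibres. To make this work globally rather than only fibrewise, I would use the injectivity of the monodromy of $f$ to pass to a finite \'etale cover $X' \to X$ coming from a finite characteristic subgroup $K \leq \pi_1(F)$ of a typical fibre $F$ of $f$, so that on the pulled-back family $W \times_X X' \to X'$ there are several pairwise disjoint holomorphic sections given by the deck translates of the pulled-back diagonal.

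I would then take a cyclic ramified cover $Z \to W \times_X X'$ branched along the union of these sections, enlarging $X'$ further if needed to ensure the requisite divisibility of the homology class of the branching divisor. The composition $Z \to W \times_X X' \to X'$ is a holomorphic submersion with connected fibres of strictly higher genus than $F$; non-isotriviality follows from that of $f$ by a monodromy comparison argument, and injectivity of its monodromy follows from the injectivity of the monodromy of $f$ via the Birman-type exact sequence relating the mapping class group of a branched cover of $F$ to the pure mapping class group of $F$. Hence $Z \to X'$ realises $X'$ as the base of an $(n+1)$-iterated Kodaira fibration with injective monodromy.

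For the finite map $Z \to X' \times X''$, I would take $X''$ to be a finite \'etale cover of $X$ chosen compatibly with the previous construction so that the second projection $\pi_2 : W \to X$ lifts to a morphism $W \times_X X' \to X''$. The pair of projections then gives a holomorphic map $W \times_X X' \to X' \times X''$ which is finite onto its image, because away from the diagonal the two projections of $W$ separate points. Composing with the finite ramified cover $Z \to W \times_X X'$ yields the desired finite holomorphic morphism $Z \to X' \times X''$, which is a ramified covering of its image. The main technical obstacle will be arranging the auxiliary covers $X'$ and $X''$ coherently so that the entire construction descends from fibrewise operations to genuine holomorphic morphisms of complex manifolds; this is precisely where the injective monodromy hypothesis plays its role, since it is what ensures that characteristic finite index subgroups of the fibre group correspond to honest finite covers of the base.
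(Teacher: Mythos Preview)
Your outline is the Atiyah--Kodaira branched-cover construction carried out in families, which is exactly what the paper invokes: the theorem is imported from \cite[Section~5]{LloPy-21}, and the only remark offered here is that the proof ``consists in performing a classical construction due to Kodaira and Atiyah `in family'\,'', together with the observation that finite covers of iterated Kodaira fibrations with injective monodromy remain such. The ingredients you list---the fibred square $X\times_Y X$ with its diagonal section, passage to characteristic covers of the fibre to produce several disjoint sections, cyclic branching along them, the Birman-sequence argument for injective monodromy, and the two projections furnishing the finite map to $X'\times X''$---are precisely those of that construction.
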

The proof of Theorem \ref{thm:ItKodFib} consists in performing a classical construction due to Kodaira and Atiyah ``in family". It also relies on the fact that a finite cover $X'\to X$ of an $n$-iterated Kodaira fibration $X$ with injective mondromy is again an $n$-iterated Kodaira fibration with injective monodromy, with base a finite cover of the base of $X$ (see the proof of \cite[Proposition 39]{LloPy-21}). In particular, if one starts from the Kodaira--Atiyah fibration, the following is an implicit consequence of the construction in \cite[Section 5]{LloPy-21}.

\begin{theorem}\label{thm:ItKodFib2}
Fix an integer $n\geq 2$. There exists a sequence of $i$-iterated Kodaira fibrations $X^i$, $2\leq i \leq n$, and a closed hyperbolic Riemann surface $X^1$, together with holomorphic submersions with connected fibres $\pi_{i,i-1}: X^i\to X^{i-1}$, which are not isotrivial and have injective monodromy, with the following properties: 
\begin{enumerate}
\item there is a finite map $f: X^n\to S_1\times \dots \times S_{2^{n-1}}$ to a direct product of $2^{n-1}$ closed hyperbolic Riemann surfaces,
\item the group $\pi_1(X^n)$ is irreducible.
\end{enumerate}
\end{theorem}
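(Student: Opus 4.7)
The plan is to proceed by induction on $n$, starting from a Kodaira--Atiyah fibration and iterating Theorem \ref{thm:ItKodFib}. For the base case $n=2$, take $X^2 \to X^1$ to be an Atiyah--Kodaira fibration over a closed hyperbolic Riemann surface. The Atiyah--Kodaira construction realises $X^2$ as a branched cover of a product $S_1 \times S_2$ of two finite covers of a hyperbolic Riemann surface, yielding the required finite map for (1); the monodromy of the Atiyah--Kodaira fibration is known to be injective and non-isotrivial, and a centraliser analysis in the resulting surface-by-surface group gives (2).

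For the inductive step, assume $X^{n-1}$, the maps $\pi_{i,i-1}$, and a finite holomorphic map $f_{n-1}: X^{n-1}\to S_1\times \cdots \times S_{2^{n-2}}$ have been constructed with $\pi_1(X^{n-1})$ irreducible. Applying Theorem \ref{thm:ItKodFib} to $X^{n-1}$ yields finite covers $\varphi': (X^{n-1})' \to X^{n-1}$ and $\varphi'': (X^{n-1})'' \to X^{n-1}$, an $n$-iterated Kodaira fibration $\pi_{n,n-1}: X^n \to (X^{n-1})'$ with injective and (by the construction in \cite[Section 5]{LloPy-21}) non-isotrivial monodromy, and a finite holomorphic map $g: X^n \to (X^{n-1})' \times (X^{n-1})''$. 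Composing $g$ with $f_{n-1}\circ \varphi'$ on the first factor and $f_{n-1}\circ \varphi''$ on the second yields a finite holomorphic map from $X^n$ to a product of $2^{n-1}$ closed hyperbolic Riemann surfaces, proving (1).

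For (2), exploit the short exact sequence
\[
 1\to \pi_1(F)\to \pi_1(X^n)\to \pi_1((X^{n-1})')\to 1
\]
attached to $\pi_{n,n-1}$ together with the induced injective non-isotrivial monodromy representation $\pi_1((X^{n-1})')\to \mcg(F)$. Suppose a finite index subgroup of $\pi_1(X^n)$ decomposes as a direct product $A\times B$ with $A,B$ non-trivial. Projecting to $\pi_1((X^{n-1})')$, whose irreducibility is inherited from $\pi_1(X^{n-1})$, forces at least one of $A,B$ to project trivially and hence lie inside the fibre subgroup $\pi_1(F)$; a centraliser argument in $\mcg(F)$, combined with injectivity and non-isotriviality of the monodromy, then yields a contradiction. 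This mirrors the analysis in \cite[Section 5]{LloPy-21}.

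The main obstacle is the irreducibility statement (2): the centraliser argument must be carried out carefully, combining the inductive hypothesis with the rigidity of injective non-isotrivial monodromy representations into the mapping class group, and passing to appropriate finite index subgroups without disturbing these properties. Part (1) is, by contrast, essentially bookkeeping once Theorem \ref{thm:ItKodFib} is in hand.
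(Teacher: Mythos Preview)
Your overall strategy matches the paper's: induct using Theorem~\ref{thm:ItKodFib}, with the Atiyah--Kodaira surface as base case, compose finite maps for part (1), and for part (2) project a putative direct product decomposition to the base, force one factor into the fibre, and run a centraliser argument. There is, however, a genuine gap in your irreducibility step.

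You write that irreducibility of $\pi_1((X^{n-1})')$ ``forces at least one of $A,B$ to project trivially''. But the images $\bar{A}$ and $\bar{B}$ of $A$ and $B$ in $\pi_1((X^{n-1})')$ are only \emph{commuting} subgroups whose product has finite index; they need not form an internal direct product, so the irreducibility hypothesis (no finite index subgroup is a nontrivial direct product) does not apply directly. The paper handles this by proving, by induction on $i$, the \emph{stronger} statement: if $G_1, G_2 \le \pi_1(X^i)$ are commuting subgroups with $G_1 \cdot G_2$ of finite index, then one of them is trivial. This strengthened hypothesis passes cleanly to projections. (One could alternatively patch your version by observing that finite index subgroups of $\pi_1(X^{i})$ have trivial centre, which forces $\bar{A}\cap\bar{B}=1$ and hence $\bar A\cdot\bar B\cong \bar A\times\bar B$; but the paper's formulation is what makes the induction close without side arguments.) The centraliser step you gesture at --- that if $G_2$ centralises a nontrivial normal subgroup $G_1$ of the fibre surface group and normalises the fibre group, then it must centralise the whole fibre, hence have image in the kernel of the monodromy --- is exactly the content of \cite[Lemma~35]{LloPy-21}, which the paper invokes explicitly; your reference to \cite[Section~5]{LloPy-21} is apt but you should isolate this lemma. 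Note also that in the inductive step you must replace the entire tower $X^1,\dots,X^{n-1}$ by the finite covers pulled back from $(X^{n-1})'$, so that the maps $\pi_{i,i-1}$ remain compatible; the paper records this as ``passing to finite covers of the $X^i$ if necessary''.
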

\begin{proof}
    By construction \cite{Ati-69,Kod-67}, the Kodaira-Atiyah fibration $X^2$ is a ramified cover of a direct product $R\times T$ of two closed hyperbolic Riemann surfaces and the projection to either of the factors is a holomorphic submersion with connected fibres and injective monodromy. In particular, the map $X^2\to R\times T$ is finite and we can choose $X^1=R$. Inductively applying Theorem \ref{thm:ItKodFib} and passing to finite covers of the $X^i$ if necessary, we can now construct a sequence of $i$-iterated Kodaira fibrations with the desired properties. The only thing that is not an immediate consequence is the irreducibility statement. For this we will prove by induction on $i$ the following stronger statement: if $G_{1}$ and $G_{2}$ are two commuting subgroups of $\pi_{1}(X^{i})$ such that $G_{1}\cdot G_{2}$ has finite index in $\pi_{1}(X^{i})$, then either $G_1$ or $G_2$ is trivial. When $i=2$ the argument is essentially contained in~\cite{Joh-1994}; we give a complete proof however. We first observe that the previous statement is true in a surface group, implying the case $i=1$, as the reader can check readily. Let now $i\in \{2, \ldots , n\}$ and let $G_1$ and $G_2$ be commuting subgroups of $\pi_{1}(X^{i})$, generating a subgroup of finite index. We consider the fibration
    $$\pi_{i,i-1} : X^{i}\to X^{i-1}.$$
    By induction hypothesis one of the two groups $$\pi_{i,i-1}(G_{1}), \;\; \pi_{i,i-1}(G_{2})$$ is trivial\footnote{If $i=2$, this also follows because $\pi_1(X^{1})$ is a surface group.}. We assume that $\pi_{i, i-1}(G_{1})=1$. The group $H:={\rm Ker}(\pi_{i,i-1 \ast})\cap \left(G_{1}\cdot G_{2}\right)$ has finite index in ${\rm Ker}(\pi_{i,i-1 \ast})$, hence is a surface group. The group $G_1$ is normal in $H$ and $G_2$ normalizes both $H$ and $G_{1}$. Since $G_2$ centralizes $G_1$, Lemma 35 from~\cite{LloPy-21} implies that $G_2$ actually centralizes $H$. This in turn implies that $G_{2}$ centralizes ${\rm Ker}(\pi_{i,i-1 \ast})$. Hence $\pi_{i,i-1 \ast}|_{G_2}$ is injective and its image is contained in the kernel of the monodromy representation. This contradicts the injectivity of this representation and concludes the proof of our statement.\end{proof}

\subsection{A complex geometry proof of a Theorem of Kochloukova--Vidussi}

We now fix an integer $n\ge 2$ and a sequence $X^i$ as in Theorem \ref{thm:ItKodFib2}. For $i\le j$, we denote by $\pi_{j,i} : X^j\to X^i$ the surjective holomorphic maps with connected fibres obtained by composing the various maps $X^{s}\to X^{s-1}$. For $2\leq i \leq n$, let $F_i$ be a fibre of $\pi_{i,i-1}$. The inclusion $F_i\hookrightarrow X^i$ is $\pi_1$-injective. We deduce that the restrictions $\pi_{j,j-1}: \pi_{j,i}^{-1}(F_i)\to \pi_{j-1,i}^{-1}(F_i)$ define $(j-i+1)$-iterated Kodaira fibrations with injective monodromy for $n\geq j\geq i+1$. We then define $$Y^{i}:=\pi_{n,n-i+1}^{-1}(F_{n-i+1})\subseteq X^{n},$$ and observe that the fundamental group of $Y^i$ injects into that of $X^n$. From this, we deduce the following result.

\begin{proposition}\label{prop:ItKodaira}
    There exists an $n$-iterated Kodaira fibration $X^n$ with the following properties:
    \begin{enumerate}
        \item $X^n$ has injective monodromy and for every integer $i$ with $n\geq i \geq 2$, $X^n$ contains a $i$-iterated Kodaira fibration $Y^i\subseteq X^n$, which is $\pi_{1}$-injected.
        \item The group $\pi_1(X^n)$ is irreducible.
        \item There is a finite holomorphic map $X^n\to A$ to a complex torus.
    \end{enumerate}
\end{proposition}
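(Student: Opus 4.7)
The plan is to take $X^n$ to be the $n$-iterated Kodaira fibration furnished by Theorem~\ref{thm:ItKodFib2}. That theorem already gives property~(2) (irreducibility of $\pi_1(X^n)$), the injective-monodromy portion of~(1), and a finite holomorphic map $f : X^n \to S_1 \times \cdots \times S_{2^{n-1}}$ onto a product of closed hyperbolic Riemann surfaces of genus $\geq 2$. So only the $\pi_1$-injectivity of each $Y^i \hookrightarrow X^n$ and the existence of a finite map to a complex torus remain to be checked.

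For~(3), I would post-compose $f$ with the product of the Albanese embeddings of the factors. Since each $S_j$ has genus $\geq 2$, the Albanese map $a_j : S_j \to \mathrm{Jac}(S_j)$ is a closed holomorphic immersion, in particular finite. The product map $\prod_j a_j$ is then a closed immersion of $S_1 \times \cdots \times S_{2^{n-1}}$ into the complex torus $A := \prod_j \mathrm{Jac}(S_j)$, and composing with $f$ yields a finite holomorphic map $X^n \to A$, since the composition of two finite maps is finite.

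For~(1), I would argue by a diagram chase exploiting that $Y^i \hookrightarrow X^n$ is the pullback of the inclusion $F_{n-i+1} \hookrightarrow X^{n-i+1}$ along the fibration $\pi_{n,n-i+1}$. In particular $Y^i \to F_{n-i+1}$ is itself a smooth fiber bundle whose fiber $F$ coincides with the generic fiber of $\pi_{n,n-i+1}$. All of the spaces $X^{n-i+1}$, $F_{n-i+1}$ and $F$ are aspherical (being iterated Kodaira fibrations, hyperbolic Riemann surfaces, or lower-step iterated Kodaira fibrations), so the homotopy long exact sequences of these two fibrations degenerate to short exact sequences of fundamental groups, fitting into a commuting diagram in which the leftmost column is the identity on $\pi_1(F)$ and the rightmost column is the $\pi_1$-injective map $\pi_1(F_{n-i+1}) \hookrightarrow \pi_1(X^{n-i+1})$. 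The latter injectivity comes from applying the same principle to the fibration $\pi_{n-i+1,n-i}$, whose base $X^{n-i}$ is again aspherical. The five lemma then forces $\pi_1(Y^i) \hookrightarrow \pi_1(X^n)$.

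I expect no substantive obstacle: the main technical input, namely $\pi_1$-injectivity of fibers inside a tower of aspherical fibrations, is essentially built into the definition of an iterated Kodaira fibration, and the remainder is organisational bookkeeping plus a single appeal to the finiteness of the Albanese map of a higher-genus curve. The only place one must be slightly attentive is to check the boundary case $i=n$, which reduces to $Y^n = X^n$ and is therefore trivial.
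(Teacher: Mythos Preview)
Your proposal is correct and follows essentially the same route as the paper: Theorem~\ref{thm:ItKodFib2} supplies (2) and the finite map to a product of hyperbolic curves, composition with the product of Jacobian embeddings gives (3), and the $\pi_1$-injectivity of each $Y^i$ is deduced from the long exact sequences of the relevant fibrations with aspherical base. The paper records this last argument in the paragraph immediately preceding the proposition, in slightly terser form than your five-lemma diagram chase.
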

\begin{proof}
    We choose $X^n$ as in Theorem \ref{thm:ItKodFib2}. The first and second part then follow from the above discussion and Theorem \ref{thm:ItKodFib2}. Moreover, there is a finite holomorphic map $X^n\to S_1\times \dots \times S_{2^{n-1}}$ to a direct product of $2^{n-1}$ closed hyperbolic Riemann surfaces. Since the Albanese map of a closed hyperbolic Riemann surface is an embedding, choosing $A$ to be the Albanese torus of $S_1\times \dots\times S_{2^{n-1}}$ completes the proof.
\end{proof}

 As a consequence of Proposition \ref{prop:ItKodaira}, we obtain a new proof of a result of Kochloukova and Vidussi \cite[Corollary 1.11]{KocVid-22}.
\begin{proof}[{Proof of Theorem \ref{thm:KV}}]
    Let $X^n$ be an $n$-iterated Kodaira fibration satisfying the conditions in Proposition \ref{prop:ItKodaira}. The smooth projective variety $X^n$ is obtained by iteratively constructing locally trivial surface bundles starting from a closed Riemann surface. In particular, $X^n$ is aspherical and, by multiplicativity of the Euler characteristic, its Euler characteristic is non-trivial. The same applies for the $i$-iterated Kodaira fibrations $Y^i\subseteq X^n$. Since, moreover, the restriction of the finite holomorphic map $X^n \to A$ to any of the $Y^i$ is finite holomorphic, all of the $Y^i$ satisfy the hypotheses of Theorem \ref{thm:LP-BNSR}. Thus, for every integer $i\in \left\{2,\dots,n\right\}$, there is a character $\chi_i:\pi_1(Y_i)\to \mathbb{Z}$ with kernel of type $\mathscr{F}_{i-1}$ and not of type ${\rm FP}_i(\mathbb{Q})$. Finally, for $i=1$ the assertion follows, since the fundamental group of the fibre of $\pi_{n,n-1}$ is a hyperbolic surface group.
\end{proof}

\begin{remark}
If $\Gamma < {\rm PU}(n,1)$ is an arithmetic lattice of the simplest type such that the Albanese map of the manifold $X=\Gamma\backslash \mathbb{B}^{n}_{\C}$ is finite, it is also true (in analogy with Theorem~\ref{thm:KV}) that for each $i\in \{1, \ldots , n\}$, $\Gamma$ contains a subgroup of type $\mathscr{F}_{i-1}$ which is not of type ${\rm FP}_{i}(\Q)$. This follows from applying the result from the present article (or from~\cite{LloPy-22}) to the fundamental group of suitable complex totally geodesic submanifolds, embedded in $X$ or some finite covering space of $X$.     
\end{remark}


\section{Further remarks}\label{morse-theoretic-remarks}

As mentioned in the introduction, we explain in this section how to recover some of our results (and results from~\cite{LloPy-22}) using purely real Morse theoretical arguments instead of the theory of BNSR-invariants. This concerns mainly the study of kernels of homomorphisms to $\Z$. For homomorphisms to $\Z^2$, there are no purely Morse theoretical arguments known to us. 

We recall that a closed $1$-form on a manifold is said to be Morse if it is locally the differential of a Morse function. We start with the following:

\begin{proposition}\label{prop:morsebnsr} Let $X$ be a closed oriented aspherical $2n$-dimensional manifold. Let $O_{n}\subset H^{1}(X,\R)-\{0\}$ be the set of nontrivial classes which can be represented by a Morse $1$-form all of whose zeros have index $n$. Then:
\begin{enumerate}
\item The set $O_n$ is open. 
\item The projection of $O_n$ in the sphere $S(\pi_{1}(X))$ is contained in $\Sigma^{n-1}(\pi_{1}(X))\cap -\Sigma^{n-1}(\pi_{1}(X))$. 
\item If $\xi$ is a rational class in $O_n$, the kernel of $\xi$ is of type $\mathscr{F}_{n-1}$. 
\end{enumerate}
\end{proposition}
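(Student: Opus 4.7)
The plan is to prove the three items in order, with the Morse-theoretic content of item (2) being the heart of the argument. For item (1), let $\alpha$ be a Morse closed $1$-form representing a class in $O_n$; its zero set is finite and consists of nondegenerate zeros, each of Morse index $n$. I would fix a continuous linear section $\iota : H^1(X,\R)\to \Omega^1(X)$ of the de Rham projection---for instance the harmonic representative map associated to an auxiliary Riemannian metric on $X$---rescaled so that $\iota([\alpha])=\alpha$. Any class $u\in H^1(X,\R)$ close enough to $[\alpha]$ is represented by $\iota(u)$, which is $C^{\infty}$-close to $\alpha$. Since each zero of $\alpha$ is nondegenerate, the implicit function theorem produces, in a neighbourhood of each such zero, a unique nearby zero of $\iota(u)$, still nondegenerate and of Morse index $n$. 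Compactness of $X$ ensures that no further zeros appear away from the original ones, so $u\in O_n$.

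For item (2), the plan is to invoke the Morse-theoretic characterisation of the BNSR invariants of aspherical manifolds recalled in the second bullet point of Section~\ref{sec:FinProps}. Lift $\alpha$ to the universal cover $\widehat{X}$, where it is exact, and fix a primitive $f:\widehat{X}\to \R$. The critical points of $f$ are exactly the lifts of the zeros of $\alpha$, all nondegenerate of Morse index $n$. Equipping $\widehat{X}$ with the pull-back of a Riemannian metric on $X$, the $\pi_1(X)$-action is cocompact and the gradient of $f$ is well-behaved, so classical Morse theory applies globally. As $t$ decreases from $C$ to $D$, each time a critical value is crossed the superlevel $\{f\ge t\}$ is altered, up to homotopy, by the attachment of a handle of coindex $2n-n=n$, hence by a cell of dimension $n$. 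Attaching cells of dimension $n$ leaves $\pi_i$ untouched for $i\le n-2$, so the inclusion $\{f\ge C\}\hookrightarrow \{f\ge D\}$ induces isomorphisms on $\pi_i$ for $i\le n-2$. The recalled criterion then gives $[\xi]\in \Sigma^{n-1}(\pi_1(X))$. To obtain $[\xi]\in -\Sigma^{n-1}(\pi_1(X))$, I would apply the same argument to $-\alpha$, whose zeros coincide with those of $\alpha$ and are again of Morse index $2n-n=n$.

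Item (3) is then an immediate consequence: a nonzero rational $\xi\in O_n$ is, after multiplication by a positive real scalar, an integer-valued character $\pi_1(X)\to \Z$; its class lies in $\Sigma^{n-1}(\pi_1(X))\cap -\Sigma^{n-1}(\pi_1(X))$ by item (2), so Proposition~\ref{prop:BNSR}(2) gives that $\ker(\xi)$ is of type $\mathscr{F}_{n-1}$. The step I expect to be the main obstacle is the careful Morse-theoretic argument on the noncompact universal cover $\widehat{X}$ in item (2): one must verify that the handle attachment description of superlevels of $f$ holds equivariantly and that the gradient flow does not degenerate at infinity. This is standard in the BNSR literature and is resolved by working with the $\pi_1(X)$-invariant pulled-back metric and exploiting cocompactness of the action, but it is the place that requires the most technical care.
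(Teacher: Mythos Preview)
Your argument is correct in outline, and items (1) and (2) match the paper's approach closely. One small quibble in item (1): a \emph{linear} section $\iota$ cannot be ``rescaled'' to satisfy $\iota([\alpha])=\alpha$ unless $\alpha$ happens to be a scalar multiple of its harmonic representative; the easy fix is to use the affine section $u\mapsto \alpha + (\text{harmonic representative of }u-[\alpha])$, or equivalently the paper's device of choosing closed forms $\theta_1,\dots,\theta_N$ whose classes form a basis of $H^1(X,\R)$ and considering $\alpha+\sum_j x_j\theta_j$ for small $(x_1,\dots,x_N)$.

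The genuine difference is in item (3). You deduce it from item (2) together with Proposition~\ref{prop:BNSR}(2), which is perfectly valid. The paper instead gives a direct Morse-theoretic proof that bypasses the BNSR black box: it passes not to the universal cover but to the \emph{infinite cyclic cover} $\widehat{X}\to X$ associated to $\ker(I_\alpha)$, on which the primitive $f:\widehat{X}\to\R$ is a \emph{proper} Morse function with all critical points of index $n$. Then for any regular value $c$, each slab $f^{-1}([c-t,c+t])$ is obtained up to homotopy from the compact regular fibre $f^{-1}(c)$ by attaching finitely many $n$-cells, so $\widehat{X}$ has the homotopy type of a CW complex with finite $(n-1)$-skeleton; since $\widehat{X}$ is a $K(\ker(\xi),1)$, the finiteness conclusion follows. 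This direct argument is precisely the purpose of Section~\ref{morse-theoretic-remarks}, namely to show that the finiteness statement can be recovered by classical Morse theory alone. Your route is shorter but treats Proposition~\ref{prop:BNSR}(2) as input rather than as something to be circumvented; the paper's route also avoids the technical concern you flag about Morse theory on the universal cover, since on the cyclic cover $f$ is proper and has only finitely many critical points in each slab.
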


\begin{proof} Let $a$ be a class in $O_n$ represented by a Morse $1$-form $\theta$, all of whose zeros having index $n$. Let $\theta_{1}, \ldots , \theta_{N}$ be closed $1$-forms whose cohomology classes form a basis of $H^{1}(X,\R)$. There exists a neighbourhood $U$ of the origin in $\R^N$ such that if $(x_{1}, \ldots , x_{N})\in U$, then the form 
$$\theta +\sum_{j=1}^{N}x_{j}\theta_{j}$$
is Morse and all its zeros have index $n$. Hence the classes 
\begin{equation}\label{eq:morseforms}
\bigg[\theta +\sum_{j=1}^{N}x_{j}\theta_{j}\bigg], \;\;\;\; (x_{1}, \ldots ,x_{N})\in U
\end{equation}
lie in $O_n$. Since the classes in~\eqref{eq:morseforms} form a neighbourhood of $a$, this proves the first claim. 

The second claim can be shown using arguments from Morse theory; we refer to Section 2.3 in~\cite{LloPy-22} for a proof that is based on ideas from the work of Simpson~\cite{Sim-93}. The proof given there is stated for a closed form which is the real part of a holomorphic $1$-form with finitely many zeros. It applies equally well for a closed (real) Morse $1$-form all of whose zeros have index equal to half the real dimension of the manifold. 

We finally prove the third claim, using classical arguments which are similar to the ones used in the proof of Proposition \ref{prop:attaching-n-cells}. We assume that $n\ge 2$, otherwise there is nothing to prove. If $\xi \in O_n$ is rational, there exists a closed $1$-form $\alpha$ which is Morse and whose critical points are all of index $n$, whose cohomology class is proportional to $\xi$, and such that the integration morphism 
\begin{equation}\label{eq:intmo}
I_{\alpha} : \pi_{1}(X)\to \R
\end{equation}
 defined by $\alpha$ has image equal to $\Z$. Let $\widehat{X}\to X$ be the covering space associated to the kernel of $I_{\alpha}$. We fix a primitive $f : \widehat{X}\to \R$ of the lift of $\alpha$ to $\widehat{X}$. The function $f$ is a proper Morse function, all of whose critical points have index $n$. Let $c$ be a regular value of $f$ and let $\widehat{X}_{c}:=f^{-1}(c)$. By real Morse theory, for all positive real numbers $t$ such that $c\pm t$ is a regular value of $f$, the space $f^{-1}([c-t, c+t])$ is obtained from the closed manifold $f^{-1}(c)$ up to homotopy equivalence by attaching finitely many $n$-cells. Since $f^{-1}(c)$ can be equipped with a finite CW-complex structure, this implies that the space $\widehat{X}$ is homotopy equivalent to a CW-complex with finite $(n-1)$-skeleton. Since $\widehat{X}$ is a classifying space for ${\rm ker}(\chi)$, we deduce that this group is of type $\mathscr{F}_{n-1}$. This concludes the proof of Proposition~\ref{prop:morsebnsr}.
 \end{proof}

To continue our discussion, we shall need the following classical result. 

\begin{proposition}\label{prop:realmorsety} Let $X$ be a closed complex manifold of complex dimension $n$ and let $\alpha$ be a closed holomorphic $1$-form with finitely many zeros. Then the cohomology class $[{\rm Re}(\alpha)]$ can be represented by a Morse $1$-form all of whose zeros have index $n$. 
\end{proposition}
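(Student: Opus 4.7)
The plan is to reduce the statement to a purely local perturbation argument near each zero of $\alpha$, exactly of the same flavour as the one carried out in Section~\ref{sec:perturbation}. The key observation is the following: if $G$ is a holomorphic function defined near the origin in $\C^n$ whose complex Hessian at $0$ is non-degenerate, then ${\rm Re}(G)$ is a real Morse function whose critical point at $0$ has index exactly $n$ (since the real Hessian of ${\rm Re}(G)$ is the real part of a non-degenerate complex quadratic form and hence has signature $(n,n)$). Therefore it suffices to replace the real $1$-form ${\rm Re}(\alpha)$ by a closed $1$-form in the same cohomology class which, near each of its zeros, coincides with the real part of a holomorphic Morse function.

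Near each zero $p$ of $\alpha$, choose holomorphic coordinates $z$ centred at $p$ and write $\alpha=dF$ for a holomorphic function $F$ with an isolated critical point at $0$. Pick two concentric closed balls $B'\Subset B$ around $0$ such that $F$ has no other critical point in $B$, and a smooth cutoff $\chi$ equal to $1$ on $B'$ and supported in the interior of $B$. For a generic complex linear form $u:\C^n\to \C$, the holomorphic function $F+u$ has $\mu$ distinct non-degenerate critical points concentrated near $0$ (where $\mu$ is the Milnor number of $F$ at $0$), and for $\|u\|$ small enough these critical points all lie inside $B'$. This fact, and the way to choose $u$, is exactly what is used in Section~\ref{sec:perturbation}, where we also refer to Milnor's book~\cite{Mil-68} for background. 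Define globally
\[
\omega \;=\; {\rm Re}(\alpha)\;+\;\sum_{p}\, d\bigl(\chi_{p}\cdot {\rm Re}(u_{p})\bigr),
\]
where the sum runs over the (finitely many) zeros of $\alpha$, each term being extended by $0$ outside the corresponding chart. Then $\omega$ is closed and cohomologous to ${\rm Re}(\alpha)$, and on each ball $B'_{p}$ it equals $d{\rm Re}(F_p+u_p)$, so its zeros there are non-degenerate of index $n$ by the observation in the first paragraph.

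The only remaining point is to ensure that $\omega$ has no additional zeros in the transition regions $B_{p}\setminus B'_{p}$. Since $dF_p$ is continuous and does not vanish on the compact set $B_{p}\setminus B'_{p}$, its norm admits a positive lower bound there. On the other hand
\[
\bigl| d(\chi_{p}\cdot {\rm Re}(u_{p}))\bigr| \;\le\; |{\rm Re}(u_{p})|\cdot |d\chi_{p}| + |\chi_{p}|\cdot |d{\rm Re}(u_{p})|,
\]
and both summands can be made arbitrarily small by shrinking $\|u_{p}\|$, because $|{\rm Re}(u_{p})|\le \|u_{p}\|\cdot {\rm diam}(B_p)$ on $B_p$. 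Hence for sufficiently small choices of the $u_{p}$, no zero of $\omega$ is created in $B_{p}\setminus B'_{p}$, and outside $\bigcup_p B_p$ we have $\omega={\rm Re}(\alpha)$, which is nonzero there by construction. The form $\omega$ thus represents $[{\rm Re}(\alpha)]$ and is Morse with all zeros of index $n$.

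The main point, as in Section~\ref{sec:perturbation}, is the use of complex Morse theory to turn an isolated holomorphic singularity into a collection of non-degenerate ones with a prescribed real Morse index; no step is genuinely delicate, but the $C^{1}$-smallness of the perturbation in the transition region has to be tracked carefully, exactly as we did there.
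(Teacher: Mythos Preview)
Your proof is correct and follows essentially the same route as the paper's: perturb a local holomorphic primitive of $\alpha$ by a generic complex linear form to make it Morse, and glue the perturbation to ${\rm Re}(\alpha)$ via a cutoff, checking $C^{1}$-smallness in the transition region to avoid spurious zeros. This is exactly the deformation argument of Section~\ref{sec:perturbation} that the paper invokes; your write-up is simply a more explicit rendition of it.
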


The proof of this proposition is the same as the second part of the deformation argument in Section \ref{sec:perturbation}: near each point $p$ which is a zero of $\alpha$, we write $\alpha = dh$ for some holomorphic function $h$ defined on a ball $B_p$ centered at $p$. We can perturb $h$ into a holomorphic function $h' : B_p \to \C$ which has finitely many critical points, all nondegenerate, and which has no critical point near the boundary. We then take one more $C^{\infty}$ perturbation $h_p$ which coincides with $h$ near the boundary of the ball and with $h'$ on a smaller ball, in such a way that $h_p=h'$ near each critical point of $h_p$. We consider the $C^{\infty}$ complex-valued form $\beta$ which equals $dh_p$ in $B_p$ and $\alpha$ outside of the union of the balls $B_p$. The forms $\alpha$ and $\beta$ are cohomologous, hence so are their real parts. The only zeros of the real part of $\beta$ are contained in one of the balls $B_p$ (for $p$ a zero of $\alpha$) and are critical points of $h_p$. Since $h_p$ is a holomorphic Morse function near its critical points, the claim follows, observing that the real part of the function $$(z_{1},\ldots , z_{n})\mapsto z_{1}^{2}+\cdots +z_{n}^{2}$$
is given by $(x_{1}^{2}+\cdots +x_{n}^{2})-(y_{1}^{2}+\cdots +y_{n}^{2})$ if $z_j=x_j +iy_j$ with $x_{j}, y_{j}\in \R$. We refer the reader to~\cite[p. 59-60]{NicPy-21} for more details on the perturbation argument.

As announced above, the following proposition can serve as a substitute for the use of BNSR-invariants, when studying kernels of homomorphism to $\Z$. Instead of working with the set $\Sigma^{n-1} \cap - \Sigma^{n-1}$, we simply consider the smaller set $O_n$, which is also open, and which is dense under suitable hypotheses.   

\begin{proposition}\label{prop:subsproof} Let $X$ be a closed K\"ahler manifold of complex dimension $n$, and let $O_n \subset H^{1}(X,\R)-\{0\}$ be defined as in Proposition~\ref{prop:morsebnsr}. Assume that $X$ has finite Albanese map. Then: 
\begin{enumerate}
\item The set $O_n$ is dense in $H^{1}(X,\R)-\{0\}$,
\item if moreover $X$ is aspherical, any rational class in $O_n$ has kernel of type $\mathscr{F}_{n-1}$ (and not of type ${\rm FP}_{n}(\mathbb{Q})$ if the Euler characteristic of $X$ is nonzero).
\end{enumerate} 
\end{proposition}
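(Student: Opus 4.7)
The plan is to assemble the building blocks already established: Proposition~\ref{prop:morsebnsr} takes care of openness and of the finiteness properties along rational classes, while Proposition~\ref{prop:realmorsety} produces Morse representatives of maximal index $n$ from holomorphic $1$-forms with finitely many zeros. The only genuinely new ingredient needed is a density statement, and this will come from the same dense set of holomorphic $1$-forms that was already used in the proof of Theorem~\ref{thm:zeetwo}.

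For part (1), I will use the fact that, under the assumption that the Albanese map of $X$ is finite, the set $U\subset H^{0}(X,\Omega^{1}_{X})$ of holomorphic $1$-forms with finitely many zeros is dense; this was recalled in the proof of Theorem~\ref{thm:zeetwo} via Propositions 14 and 18 of~\cite{LloPy-22}. Hodge theory on the compact K\"ahler manifold $X$ provides an $\R$-linear isomorphism $H^{0}(X,\Omega^{1}_{X})\to H^{1}(X,\R)$ sending $\alpha$ to the de Rham class $[\mathrm{Re}(\alpha)]$. For any $\alpha\in U$, Proposition~\ref{prop:realmorsety} places $[\mathrm{Re}(\alpha)]$ in $O_{n}$. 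Since $U$ is dense in $H^{0}(X,\Omega^{1}_{X})$, its image under this real-part isomorphism is dense in $H^{1}(X,\R)$ and is contained in $O_{n}$; hence $O_{n}$ is dense in $H^{1}(X,\R)-\{0\}$.

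For part (2), that the kernel of any rational class $\xi\in O_{n}$ is of type $\mathscr{F}_{n-1}$ is exactly the content of Proposition~\ref{prop:morsebnsr}(3). The failure of ${\rm FP}_{n}(\Q)$ under the additional hypothesis $\chi(X)\neq 0$ will be deduced from the last sentence of Addendum~\ref{add-to-thm:LP-BNSR}, which, under the hypotheses that $X$ is aspherical with finite Albanese map and $\chi(X)\neq 0$, asserts precisely that the kernel of an arbitrary character of $\pi_{1}(X)$ is not of type ${\rm FP}_{n}(\Q)$; the underlying mechanism combines Theorem 10 of~\cite{LloPy-22} (Singer's conjecture in this setting, yielding $\chi(X)=(-1)^{n}b^{(2)}_{n}(X)$) with Proposition 14 of~\cite{LIMP-21}.

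There is no real obstacle in this proof: it is a direct assembly of the preceding statements, and the two nontrivial inputs have both been employed already elsewhere in the paper. The one point to keep an eye on is that the Hodge-theoretic real-part map is an $\R$-linear homeomorphism of finite-dimensional real vector spaces, so it automatically carries a dense subset of $H^{0}(X,\Omega^{1}_{X})$ to a dense subset of $H^{1}(X,\R)$; no further perturbation argument is required.
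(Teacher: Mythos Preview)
Your proof is correct and follows essentially the same approach as the paper: both use the density of holomorphic $1$-forms with finitely many zeros (via Propositions~14 and~18 of~\cite{LloPy-22}) together with Proposition~\ref{prop:realmorsety} to establish density of $O_n$, and then invoke Proposition~\ref{prop:morsebnsr}(3) and the last sentence of Addendum~\ref{add-to-thm:LP-BNSR} for part~(2). Your explicit mention of the real-part map $H^{0}(X,\Omega^{1}_{X})\to H^{1}(X,\R)$ as an $\R$-linear homeomorphism makes the density transfer slightly more transparent than in the paper, but the argument is the same.
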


\begin{proof} Let $a_X : X \to A(X)$ be the Albanese map of $X$. If $\beta$ is a holomorphic $1$-form on $A(X)$ whose restriction to any positive dimensional subtorus does not vanish, then $\alpha =a_X^{\ast}\beta$ has finitely many zeros; moreover the set of forms $\alpha$ obtained in this way is dense in $H^{1,0}(X)$ (see Propositions 14 and 18 in~\cite{LloPy-22}). According to Proposition~\ref{prop:realmorsety}, we have $[{\rm Re} (\alpha)]\in O_n$ for such an $\alpha$. This proves that $O_n$ is dense. The second point follows from Proposition~\ref{prop:morsebnsr} and the last claim in Addendum~\ref{add-to-thm:LP-BNSR}. 
\end{proof}

We now turn to the existence of {\it perfect circle-valued Morse functions} on K\"ahler manifolds. Let us recall the terminology first. Let $M$ be a smooth closed manifold. We call a map $M \to S^1$ a circle-valued function and say that it is Morse if it coincides locally with a Morse function. A circle-valued Morse function $f : M \to S^1$ allows to study the topology of $M$ by starting with a regular fibre of $f$, thickening it, and attaching handles when passing a critical level set. In particular for such an $f$ we have the formula:
$$\chi (M)=\sum_{x} (-1)^{{\rm ind}(x)},$$
where the sum runs over the finitely many critical points of $f$, and ${\rm ind}(x)$ is the index of a critical point $x$. We say that $f$ is perfect if it has $\vert \chi (M)\vert$ critical points. When $M$ is odd-dimensional, or simply of Euler characteristic equal to $0$, this means that $f$ is a fibration. When $M$ is even-dimensional of nonzero Euler characteristic, this happens if and only if the indices of the critical points all have the same parity. Some existence results for perfect circle-valued Morse functions on negatively curved manifolds are available: every closed hyperbolic $3$-manifold has a finite cover fibring over the circle, as follows from the work of Agol~\cite{Ago-13}, see also~\cite{Ber-15}; in dimension $5$, Italiano--Martelli--Migliorini built examples of cusped real hyperbolic manifolds fibring over the circle~\cite{IMM-22}, and in dimension $4$, Battista and Martelli found finitely many examples (both closed and cusped) of real hyperbolic $4$-manifolds admitting perfect circle-valued Morse functions~\cite{BaMa-22}. We observe here that Propositions~\ref{prop:realmorsety} and~\ref{prop:subsproof} have the following immediate consequence. 

\begin{theorem} Let $X$ be a closed K\"ahler manifold of complex dimension $n$, with finite Albanese map. Then any rational ray contained in the dense open set $O_n\subset H^{1}(X,\R)-\{0\}$ can be represented by a circle-valued Morse function all of whose critical points have index $n$ (in particular such a Morse function is perfect). 
\end{theorem}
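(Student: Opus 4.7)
The plan is to combine the characterization of $O_n$ in terms of Morse 1-forms with the classical correspondence between integral closed 1-forms and smooth maps to the circle on a closed manifold.

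Starting from a rational ray $\R_{>0}\cdot \xi$ contained in $O_n$, I would first observe that $O_n$ is invariant under multiplication by any nonzero real scalar: rescaling a closed Morse 1-form by $\lambda \neq 0$ preserves its zero set, and at each zero the Morse index is either unchanged (if $\lambda > 0$) or replaced by $2n - n = n$ (if $\lambda < 0$, since the Hessian is negated). Hence I may rescale $\xi$ by a suitable positive rational number to ensure that $\xi \in H^1(X,\Z)$.

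By the definition of $O_n$, there exists a closed Morse 1-form $\alpha$ on $X$ with $[\alpha] = \xi$ all of whose zeros have index $n$. Since $\xi$ is integral, $\alpha$ has integer periods, so fixing a basepoint $x_0 \in X$ and setting
$$f(x) \; := \; \int_{x_0}^{x} \alpha \pmod{\Z}$$
produces a well-defined smooth map $f : X \to S^1 = \R/\Z$ satisfying $df = \alpha$. The critical points of $f$ coincide with the zeros of $\alpha$ and inherit their Morse indices, so $f$ is a circle-valued Morse function all of whose critical points have index $n$; its cohomology class $[f^{\ast}d\theta] = [\alpha] = \xi$ lies in the prescribed ray.

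The parenthetical perfection assertion is then immediate from the Morse-theoretic Euler characteristic formula recalled just before the theorem, namely $\chi(X) = \sum_{p \in \mathrm{Crit}(f)} (-1)^{\mathrm{ind}(p)}$. Under the index-$n$ hypothesis this collapses to $\chi(X) = (-1)^n\, |\mathrm{Crit}(f)|$, and hence $|\mathrm{Crit}(f)| = |\chi(X)|$. I do not expect any genuine obstacle: all three ingredients — scaling invariance of $O_n$, existence of a Morse representative supplied by its definition, and integration of an integral closed 1-form to a circle-valued primitive — are already in place, and the proof reduces to assembling them.
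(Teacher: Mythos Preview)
Your proposal is correct and matches the paper's intended argument: the paper states the theorem as an ``immediate consequence'' of the preceding propositions without spelling out the proof, and the details you supply---scale invariance of $O_n$, rescaling to an integral class, integrating the Morse $1$-form to a circle-valued function, and reading off perfection from the Euler characteristic formula---are exactly what is implicit there. The finite Albanese hypothesis is only needed to justify the phrase ``dense open set $O_n$'' (via Proposition~\ref{prop:subsproof}), not for the representation claim itself, which you rightly deduce from the definition of $O_n$ alone.
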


Applying the result of Eyssidieux~\cite{Eys-18} already alluded to before (see also~\cite{LloPy-22}), we obtain:

\begin{corollary} Let $\Gamma < {\rm PU}(n,1)$ be a cocompact torsion-free arithmetic lattice. Assume that $b_{1}(\Gamma)>0$. Then there is a finite index subgroup $\Gamma_0 < \Gamma$ such that for any subgroup of finite index $\Gamma_1 < \Gamma_0$, there is a dense open set $O\subset H^{1}(\Gamma_1,\R)$ such that any rational class in $O$ can be represented (up to scalar) by a circle-valued Morse function on $\Gamma \backslash \mathbb{B}_{\mathbb{C}}^n$ all of whose critical points have index $n$. 
\end{corollary}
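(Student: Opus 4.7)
The plan is to reduce the corollary to the theorem that immediately precedes it by producing, inside any arithmetic cocompact torsion-free lattice $\Gamma < \mathrm{PU}(n,1)$ with $b_1(\Gamma)>0$, a finite-index subgroup whose associated ball quotient has finite Albanese map, and then to observe that this ``finite Albanese map'' property is inherited by any further finite-index subgroup.

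First, I would invoke Eyssidieux's theorem (cited above as \cite{Eys-18}; see also \cite[Theorem 24]{LloPy-22}): since $\Gamma$ is arithmetic in $\mathrm{PU}(n,1)$ with $b_{1}(\Gamma)>0$, there is a finite-index subgroup $\Gamma_{0}<\Gamma$ such that the Albanese map $a_{X_{0}}\colon X_{0}\to A(X_{0})$ of $X_{0}:=\Gamma_{0}\backslash \mathbb{B}_{\mathbb{C}}^{n}$ is an immersion, hence in particular a finite holomorphic map. This is the only place where arithmeticity (and not merely $b_{1}>0$) is used.

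Next, I would check that the property of having a finite Albanese map is preserved under taking finite covers. Let $\Gamma_{1}<\Gamma_{0}$ be any subgroup of finite index, and let $\pi\colon X_{1}\to X_{0}$ be the induced finite \'etale cover, with $X_{1}=\Gamma_{1}\backslash\mathbb{B}_{\mathbb{C}}^{n}$. By the universal property of the Albanese, there is a commutative diagram
\[
\xymatrix{X_{1}\ar[r]^{a_{X_{1}}}\ar[d]_{\pi}& A(X_{1})\ar[d]^{\theta}\\ X_{0}\ar[r]_{a_{X_{0}}}& A(X_{0})}
\]
for some morphism of abelian varieties $\theta$. The composition $\theta\circ a_{X_{1}}=a_{X_{0}}\circ\pi$ is finite, since it is a composition of two finite maps, and this immediately forces $a_{X_{1}}$ to have finite fibres. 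Thus $X_{1}$ is a closed aspherical K\"ahler manifold of complex dimension $n$ with finite Albanese map.

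Having established this, the corollary is now an immediate application of the theorem just proved. Namely, that theorem produces a dense open set $O\subset H^{1}(X_{1},\mathbb{R})-\{0\}$ (equivalently $H^{1}(\Gamma_{1},\mathbb{R})-\{0\}$, since $X_{1}$ is aspherical with fundamental group $\Gamma_{1}$) such that every rational ray in $O$ is represented, up to a positive scalar, by a circle-valued Morse function on $X_{1}$ all of whose critical points have index $n$. There is essentially no obstacle in this argument: the only nontrivial input is Eyssidieux's theorem, which we use as a black box, and the inheritance of the finite Albanese property under finite covers is a formal consequence of the universal property. The rest is just unpacking the previous theorem.
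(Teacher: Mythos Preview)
Your proposal is correct and follows essentially the same approach as the paper, which simply says the corollary is obtained by ``applying the result of Eyssidieux'' to the preceding theorem. You are in fact more explicit than the paper: you spell out the inheritance of the finite Albanese map under further finite covers via the universal property, a step the paper leaves to the reader.
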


\bibliography{References}
\bibliographystyle{amsplain}

\end{document}